\documentclass{amsart}

\usepackage{quiver}
\usepackage[margin=1in]{geometry}

\usepackage{amsrefs}
\usepackage{hyperref}
\usepackage{amsfonts}
\usepackage{amssymb}
\usepackage{amsmath}
\usepackage{amsthm}
\usepackage[inline]{enumitem}
\usepackage{tikz-cd}
\usepackage{color}

\usepackage[textsize=tiny]{todonotes}

\newtheorem{theorem}{Theorem}[section]
\newtheorem{corollary}[theorem]{Corollary}

\newtheorem{definition}[theorem]{Definition}

\newtheorem{lemma}[theorem]{Lemma}

\newtheorem{fact}[theorem]{Fact}
\newtheorem{remark}[theorem]{Remark}

\begin{document}

\title{Kaplansky Classes and Stability}

\author{Sean Cox}
\email{scox9@vcu.edu}
\address{
Department of Mathematics and Statistics \\
Virginia Commonwealth University \\
1015 Floyd Avenue \\
Richmond, Virginia 23284, USA 
}

\date{\today}

\thanks{Partially supported by NSF grant DMS-2154141.  Thanks to Marcos Mazari-Armida and Jan Trlifaj for explaining their results in \cite{MA_Tflifaj_Decon} during the BLAST conference at Baylor University (May 2026).  Thanks also to the BLAST conference organizers and the NSF support of the conference (DMS 2519783).  The Claude Opus and Fable models provided valuable feedback on multiple drafts (including noticing a crucial error in an earlier draft), fixed an issue about $\kappa$-presentable objects in the proof of Theorem \ref{thm_CharacterizeAccessibleCats}, helped the author understand the relevant literature on quasicoherent sheaves, and suggested the proof of Lemma \ref{lem_quasicoherence_downward} appearing here (replacing an earlier, more complicated argument).}

\subjclass[2020]{ 03C45, 03C48, 03E75, 20M30, 20M50, 18A32 }

\begin{abstract}
Soon after the proof of the Flat Cover Conjecture around the year 2000, two related concepts were introduced for classes in Grothendieck categories: \emph{Deconstructible classes} and the strictly weaker \emph{Kaplansky classes}.  All commonly-studied Kaplansky classes, such as the class $\mathcal{FM}$ of Flat Mittag-Leffler modules in $R$-Mod and the class $\mathcal{D}$ of Drinfeld vector bundles in Qcoh($X$), satisfy a stronger property we introduce here: they are \emph{Uniformly Stationary Kaplansky} classes.  While such classes generally lack the key feature (existence of precovers) that make deconstructible classes so central to modern relative homological algebra, they often suffice for model-theoretic stability.  This is true even in the absence of the Amalgamation Property, with various restricted classes of morphisms, and in some non-additive settings.  For example, $\mathcal{FM}$ with pure embeddings, and $\mathcal{D}$ with (either categorical or geometric) pure embeddings, are stable in all sufficiently closed cardinals.

\end{abstract}

\maketitle


\section{Introduction}

Enochs' \emph{Flat Cover Conjecture}---that over every ring, every module has a flat cover---was proved around the year 2000 (\cite{MR1832549}, \cite{MR1798574}).  The proof spawned two important concepts:  \emph{deconstructible classes} \cite{MR2384838} of objects in a Grothendieck category, and the weaker notion of a \emph{Kaplansky class} \cite{MR1926201}. The properties are equivalent for classes closed under directed colimits, but are generally distinct.  The class $\mathcal{FM}$ of Flat Mittag-Leffler modules is always a Kaplansky class~\cite{MR2988573}, but fails to be deconstructible over non-perfect rings such as $\mathbb{Z}$~\cite{MR2900444}, which implies that a proposal of Drinfeld~\cite{MR2181808} in infinite dimensional algebraic geometry could not work.  Furthermore, over non-perfect rings, $\mathcal{FM}$ even fails to have the key property possessed by deconstructible classes (existence of precovers) which makes deconstructible classes so useful in relative homological algebra.  

We show that all commonly-studied Kaplansky classes, such as $\mathcal{FM}$ and Drinfeld vector bundles, satisfy an apparently stronger property we call \textbf{Uniformly Stationary Kaplansky} (Section \ref{sec_StatKaplansky}). Although these classes can still behave badly from the homological or geometric perspective---$\mathcal{FM}$ and the Drinfeld vector bundles being the prime examples---we prove they have the nice model-theoretic property of \emph{stability}.  This remains true even when paired with restrictive classes of morphisms, and in situations where the Amalgamation Property fails (see below):

\begin{theorem}\label{thm_MainApplications}

Suppose $\mathbf{K}$ is an isomorphism-closed class of objects in $R$-Mod and: 
\begin{enumerate*}[label=(\roman*)]

	\item\label{item_KappaRegularLargerR} $\kappa$ is a regular uncountable cardinal with $\kappa > |R|$;
	\item\label{item_LambdaLessKappaYetAgain} $\lambda^{<\kappa} = \lambda$; 

	\item\label{item_ClosureUnderPureExt_MainApp} $\mathbf{K}$ is closed under pure extensions, and 
	
	\item\label{item_BothKappaLambdaUST} $\mathbf{K}$ is both $<\kappa$-Uniformly Stationary Kaplansky and $\lambda$-Uniformly Stationary Kaplansky. 
	
\end{enumerate*}
Then 
\begin{equation*}
(\mathbf{K}, \text{Pure}), \ (\mathbf{K}, \mathbf{K}\text{-Pure}), \text{ and } (\mathbf{K}, \text{Mono})  
\end{equation*}
are each $\lambda$-stable, where $\mathbf{K}$-$\text{Pure}$ denotes the class of pure embeddings with cokernel in $\mathbf{K}$.  Furthermore:
\begin{enumerate}
	\item The same conclusion holds for the category Qcoh($X$) of quasicoherent sheaves over a quasicompact and semi-separated scheme $X$, with ``purity" interpreted geometrically (stalkwise), and even for categorical purity under additional technical assumptions on $\mathbf{K}$.  See Theorem \ref{thm_Precise_Qcoh} on page \pageref{thm_Precise_Qcoh}.

	\item The same conclusion holds in the category $S$-Act of actions by the monoid $S$, if $\kappa > |S|$, ``extension" means ``Rees extension", and (in the Pure and $\mathbf{K}$-Pure cases) assuming $S$ is linearly ordered.    

\end{enumerate}
\end{theorem}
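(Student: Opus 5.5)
We have to guess the definitions, since Uniformly Stationary Kaplansky is defined later. The plan is to count Galois types over a model $M \in \mathbf{K}$ of size $\lambda$ and show there are at most $\lambda$ of them. A type $\mathrm{gtp}(a/M;N)$ is represented by an extension $M \to N$ in the relevant morphism class together with an element $a \in N$. The goal is to replace $N$ by a small substructure that already determines the type. For this to work, the substructure must be built uniformly from $M$ together with a piece of size $<\kappa$.

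First, we use $\lambda$-Uniformly Stationary Kaplansky. Given $M \subseteq N$ with $|M| = \lambda$, we obtain a stationary (indeed club-like, on a $\kappa$-filtration) family of subobjects of $N$ of size $\lambda$ containing $M \cup \{a\}$. Each such subobject $N'$ lies in $\mathbf{K}$, sits purely in $N$, and has $N/N' \in \mathbf{K}$. We intersect the stationary set coming from the Kaplansky property with the club of subobjects closed under witnesses for purity. This step uses $\kappa > |R|$ and the regularity of $\kappa$. The point is that $M \to N'$ and $N' \to N$ are again morphisms of the class being considered (Pure, $\mathbf{K}$-Pure, or Mono). So the type is realized in an extension $N'$ of size $\lambda$.

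Next, the $<\kappa$-Uniformly Stationary Kaplansky property, applied inside $N'$, lets us find a subobject $C \ni a$ of size $<\kappa$. We then take $N'' = M + C$, the submodule generated by $M$ and $C$, and arrange that $N''$ is pure in $N'$ with $N'/N'' \in \mathbf{K}$. Closure of $\mathbf{K}$ under pure extensions gives $N'' \in \mathbf{K}$, since $N''$ is a pure extension of $M$ by $N''/M \cong C/(C \cap M)$. Hence the type is determined by the isomorphism type of the triple $(M, C \cap M, C)$ over $M$, together with $a$. The number of such data is bounded by the number of subsets of $M$ of size $<\kappa$, times the number of isomorphism types of objects of size $<\kappa$, times the number of amalgamation patterns. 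This product is at most $\lambda^{<\kappa} \cdot 2^{<\kappa} = \lambda$ by hypothesis. Two extensions with isomorphic data give the same Galois type, witnessed directly by the isomorphism $M + C \cong M + C'$ over $M$. This avoids any appeal to the Amalgamation Property: for each datum we fix one representative $N''$, and every type maps into it.

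For Qcoh($X$) the argument is the same, carried out on an affine cover: quasicompactness gives finitely many affine opens, and semi-separatedness makes the intersections affine. Geometric purity is checked stalkwise and downward on the cover, which should be the content of the downward quasicoherence lemma. For $S$-Act, pure extensions are replaced by Rees extensions, so $M \cup C$ generates a subact whose Rees quotient is controlled. Linear order of $S$ is what makes purity of $M \cup C$ inside $N'$ transfer.

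I expect the main obstacle to be the step where $M + C$ is pure in $N'$ with quotient in $\mathbf{K}$, uniformly enough that the data $(C \cap M, C)$ determines the type. This is where "uniformly stationary," rather than plain Kaplansky, should be essential. The first thing I would try is a diagonal-intersection argument: take a continuous chain of small subobjects of $N'$ and a filtration of $M$, and intersect the stationary sets given by both hypotheses on $P_\kappa(N')$. This should produce a $C$ that works simultaneously for the subobject $M$.
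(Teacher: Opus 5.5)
Your architecture matches the paper's. You shrink the codomain to size $\lambda$ via a $\lambda$-trace (Lemma \ref{lem_LS_implies_BoundedReps}), replace $N'$ by $M+C$ with $C$ small, and count. Your direct count of the pushout data $(C\cap M, C, a)$ is a fine substitute for the paper's $<\kappa$-closed $\mathfrak{W}$ of size $\lambda$.

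The gap is in the step that carries the content: showing that $M\to M+C$ and $M+C\to N'$ lie in $\mathcal{K}$, i.e.\ the factoring clause (iii) of Theorem \ref{thm_MainThm_Stability}.

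\textbf{Why $M+C\in\mathbf{K}$.} Your argument uses the wrong side of the pushout square. Treating $M+C$ as an extension of $M$ by $C/(C\cap M)$ needs $C/(C\cap M)\in\mathbf{K}$. For a trace $C=\mathfrak{N}\cap N'$ this quotient is $\mathfrak{N}\cap(N'/M)$, which you control only when $N'/M\in\mathbf{K}$, i.e.\ only in the $\mathbf{K}$-Pure case. In the Mono case, $M$ need not even be pure in $M+C$. What works in all three cases is the following.
\begin{enumerate}
\item Take $C=\mathfrak{N}\cap N'$ for a single $\mathfrak{N}$ from the uniform class having $M$, $N'$ (and $N'/M$) as elements. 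Then $C\cap M=\mathfrak{N}\cap M$, and $M+C$ is the pushout of $M\supseteq\mathfrak{N}\cap M\to C$.
\item So $C\to M+C$ is a pushout of the trace inclusion $\mathfrak{N}\cap M\subseteq M$. That inclusion is pure by elementarity (Lemma \ref{lem_EasyPureTraces}), whatever $M\to N'$ is, so $C\to M+C$ is pure.
\item Its cokernel $M/(\mathfrak{N}\cap M)$ and $C$ itself both lie in $\mathbf{K}$ by the Kaplansky property at that same $\mathfrak{N}$. Closure under pure extensions then gives $M+C\in\mathbf{K}$.
\end{enumerate}
Merely ``applying the Kaplansky property inside $N'$'' says nothing about $C\cap M$. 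This simultaneity across $M$ and $N'$ is where uniformity is used.

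\textbf{Purity of $M+C$ in $N'$.} The step you flag as the main obstacle, purity of $M+C$ in $N'$ with $N'/(M+C)\in\mathbf{K}$, is left open, and no diagonal-intersection argument is needed. The missing ingredient is the isomorphism $N'/(M+(\mathfrak{N}\cap N'))\cong (N'/M)/(\mathfrak{N}\cap(N'/M))$ from Fact \ref{fact_WhatD_looks_like}. For finitely presented $X$:
\begin{itemize}
\item $\text{Hom}(X,N')\twoheadrightarrow\text{Hom}(X,N'/M)$, by purity of $M$ in $N'$;
\item $\text{Hom}(X,N'/M)\twoheadrightarrow\text{Hom}(X,(N'/M)/(\mathfrak{N}\cap(N'/M)))$, because the trace $\mathfrak{N}\cap(N'/M)$ is pure in $N'/M$.
\end{itemize}
The composite gives purity of $M+C$ in $N'$. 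In the $\mathbf{K}$-Pure case, apply the Kaplansky property to $N'/M\in\mathfrak{N}\cap\mathbf{K}$ at the same $\mathfrak{N}$. This yields both $(M+C)/M\cong\mathfrak{N}\cap(N'/M)\in\mathbf{K}$ and $N'/(M+C)\in\mathbf{K}$.

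Your first step silently needs the same device: getting $N'/M\in\mathbf{K}$ requires applying the $\lambda$-Kaplansky property to $N/M$ at the same model.

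Finally, your Qcoh sketch covers at best geometric purity. For categorical purity the trace inclusions must be categorically pure, and the paper obtains this only under the extra $<\zeta$-closure hypothesis, via Lemma \ref{lem_CatPureTraces}.
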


The $(\mathbf{K}, \text{Pure})$ part of Theorem \ref{thm_MainApplications} strengthens the main result of Mazari-Armida and Trlifaj~\cite{MA_Tflifaj_Decon}, who obtained the same conclusion (in $R$-Mod) under the stronger\footnote{by Lemma \ref{lem_DeconImpliesUnifStatKap} } assumption that $\mathbf{K}$ is $<\kappa$-deconstructible.  By \cite[Example 2.25]{MA_Tflifaj_Decon}, such categories can fail to have the Amalgamation Property.

\begin{corollary}\label{cor_SpecificExamples}
Each of the following is stable at all sufficiently closed cardinals:
\begin{enumerate}
	\item In $R$-Mod: $\mathcal{FM}$ together with any of Mono, Pure, or $\mathcal{FM}$-Pure;
	
	\item In Qcoh($X$) where $X$ is a quasicompact semi-separated scheme:  $\mathcal{D}:=$Drinfeld vector bundles together with any of Mono, Pure, or $\mathcal{D}$-Pure (with ``pure" interpreted in either the categorical or geometric sense)
	
\end{enumerate}
\end{corollary}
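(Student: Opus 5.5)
The plan is to derive Corollary \ref{cor_SpecificExamples} directly from Theorem \ref{thm_MainApplications} (and, for Qcoh($X$), its precise form Theorem \ref{thm_Precise_Qcoh}). Everything reduces to checking hypotheses \ref{item_KappaRegularLargerR}--\ref{item_BothKappaLambdaUST} for $\mathbf{K}=\mathcal{FM}$ and $\mathbf{K}=\mathcal{D}$, and then exhibiting a proper class of suitable $\lambda$.

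\textbf{Cardinals.} Fix a regular uncountable $\kappa>|R|$; in the sheaf case, also take $\kappa$ above the size of a fixed finite affine open cover of $X$ and of the rings involved. "Sufficiently closed" will mean $\lambda^{<\kappa}=\lambda$. There is a proper class of such $\lambda$, for instance $\lambda=\mu^{<\kappa}$ for any $\mu$, and these are cofinal.

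\textbf{Closure under pure extensions.}
\begin{itemize}
\item For $\mathcal{FM}$: if $0\to A\to B\to C\to 0$ is pure exact with $A,C\in\mathcal{FM}$, then $B$ is flat because flat modules are closed under extensions. Mittag-Leffler modules are closed under extensions, and for pure extensions this is standard (Raynaud--Gruson), so $B\in\mathcal{FM}$.
\item For $\mathcal{D}$: Drinfeld vector bundles are defined locally, by sections on affine opens being flat Mittag-Leffler. Geometric (stalkwise) pure extensions restrict to pure extensions on each affine open, so closure follows from the module case.
\end{itemize}

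\textbf{The Uniformly Stationary Kaplansky hypothesis.} This is the main obstacle. One needs both $<\kappa$-Uniformly Stationary Kaplansky and $\lambda$-Uniformly Stationary Kaplansky, which are defined in Section \ref{sec_StatKaplansky}. The paper's introduction claims these hold for $\mathcal{FM}$ and $\mathcal{D}$, so there should be a lemma there to cite.

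If such a lemma is not available, I would argue as follows for $\mathcal{FM}$. By the characterisation of Herbera--Trlifaj, a flat module is Mittag-Leffler iff it has a $\kappa$-dense system of $<\kappa$-presented pure submodules in $\mathcal{FM}$ (for $\kappa>|R|$ regular uncountable). Taking the union of a continuous chain of elementary submodels $N$ with $N\cap\kappa\in\kappa$, the traces $M\cap N$ give a club of pure, flat Mittag-Leffler submodules with flat Mittag-Leffler quotients. This should supply the uniform stationarity at both $\kappa$ and $\lambda$, and I expect this witnessing to be the technical heart.

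For $\mathcal{D}$ I would transfer the argument through the affine cover. This uses Lemma \ref{lem_quasicoherence_downward} to ensure that elementary-submodel traces of a quasicoherent sheaf are again quasicoherent subsheaves. Semi-separatedness keeps the intersections affine, so the finitely many module-level clubs can be intersected.

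\textbf{Conclusion.} Applying Theorem \ref{thm_MainApplications} gives $\lambda$-stability of $(\mathcal{FM},\text{Mono})$, $(\mathcal{FM},\text{Pure})$ and $(\mathcal{FM},\mathcal{FM}\text{-Pure})$. Theorem \ref{thm_Precise_Qcoh} gives the same for $\mathcal{D}$ with geometric purity. For categorical purity, I would verify the extra technical assumptions of Theorem \ref{thm_Precise_Qcoh} for $\mathcal{D}$. I expect this to follow because Drinfeld vector bundles are locally flat, and for such sheaves categorical and geometric purity should agree up to the cokernel condition.
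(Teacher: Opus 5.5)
\textbf{Main gap: the choice of $\kappa$ in hypothesis \ref{item_BothKappaLambdaUST}.} Your route (verify the hypotheses of Theorem \ref{thm_MainApplications} and Theorem \ref{thm_Precise_Qcoh}) is the paper's. The verification of \ref{item_BothKappaLambdaUST} breaks because you fix an \emph{arbitrary} regular uncountable $\kappa>|R|$. For such $\kappa$ the $<\kappa$-Uniformly Stationary Kaplansky property of $\mathcal{FM}$ is not available, and it can be false. Over $\mathbb{Z}$ with $\kappa=\aleph_1$, take a ladder-system $\aleph_1$-free group $B$ of size $\aleph_1$ whose filtration $(B_\alpha)$ has $B/B_\delta$ not $\aleph_1$-free at every limit $\delta$. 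Then every countable $\mathfrak{N}$ containing $B$ and the filtration has $\mathfrak{N}\cap B=B_{\mathfrak{N}\cap\omega_1}$, so $B/(\mathfrak{N}\cap B)\notin\mathcal{FM}$.

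Your fallback argument misses exactly this point. A dense system of small $\mathcal{FM}$ submodules (like purity of the trace) controls $\mathfrak{N}\cap M$, but says nothing about $M/(\mathfrak{N}\cap M)$. Also, the good $\mathfrak{N}$ do not form a club in general.

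What the paper actually uses is Lemma \ref{lem_FM_UnifStatKap}. With $\theta=|R|+\aleph_0$, models closed under $\le\theta$-sequences have $\theta^+$-pure traces (Lemma \ref{lem_EasyPureTraces}). That $\theta^+$-purity is what the \v{S}aroch--Trlifaj argument needs to make the quotient flat Mittag-Leffler. Such models of size $<\kappa$ are stationary in $[V]^{<\kappa}$ only if $\mu^\theta<\kappa$ for all $\mu<\kappa$, which forces $\kappa>2^\theta$. So you must take, e.g., $\kappa=(2^\theta)^+$. Then ``sufficiently closed'' becomes $\lambda^{2^\theta}=\lambda$, which gives both $\lambda^{<\kappa}=\lambda$ and $\lambda^\theta=\lambda$. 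The latter yields the $\lambda$-Uniformly Stationary Kaplansky property by the same lemma applied at $\lambda^+$.

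\textbf{Drinfeld bundles with categorical purity.} You misidentify the extra hypothesis of Theorem \ref{thm_Precise_Qcoh}. It has nothing to do with categorical and geometric purity agreeing for locally flat sheaves, a claim you leave unjustified. It asks that the witnessing stationary classes contain stationarily many $<\zeta$-closed structures, where $\zeta=|\mathrm{FP}\cup\bigcup\mathrm{FP}|^+$. That closure is what lets Lemma \ref{lem_CatPureTraces} make traces and restrictions categorically pure. The paper gets it for free by building $\zeta$ into the cardinal: $\theta:=|\mathcal{L}_{\mathcal{R}}|+\zeta+\aleph_0$, $\kappa:=(2^\theta)^+$, using the $<\theta^+$-closed models supplied by Corollary \ref{cor_Drinfeld_UnifStatKap}.

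Likewise, the Uniformly Stationary Kaplansky property of $\mathcal{D}$ does not come from intersecting finitely many ``module-level clubs.'' These are stationary classes, not clubs, and stationary classes need not meet stationarily. Instead, a single uniform class of $\le|\mathcal{L}_{\mathcal{R}}|$-closed models works. For such $\mathfrak{N}$, $D\in\mathfrak{N}$ gives $D(v)\in\mathfrak{N}$ for every $v$, so the $\mathcal{FM}$ lemma applies coordinatewise on the same $\mathfrak{N}$.

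\textbf{Minor point.} For closure under \emph{categorically} pure extensions, it is simplest to note that $\mathcal{D}$ is closed under all extensions, since cokernels are computed coordinatewise.
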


The results above follow from a very general sufficient condition for stability (Theorem \ref{thm_MainThm_Stability} on page  \pageref{thm_MainThm_Stability}), which involves generalizing the Uniformly Stationary Kaplansky property from classes \emph{of objects}, to entire categories where there may not even be a good notion of a quotient.  The generalization is a variant of the downward L\"owenheim-Skolem (LS) property that we call the \textbf{$\boldsymbol{<\kappa}$-Uniformly Stationary LS property} (Definition \ref{def_Stat_LS}).  This property yields a new characterization of accessible categories (Theorem \ref{thm_CharacterizeAccessibleCats} on page \pageref{thm_CharacterizeAccessibleCats}).

Notation and conventions for $R$-Mod, $S$-Act, and Qcoh($X$) agree with \cite{MR1753146}, \cite{MR1751666}, and \cite{MR2139915}, respectively.  A monoid $S$ is a structure satisfying the group axioms with the possible exception of existence of inverses; and a (left) $S$-act on a set $X$ is defined just as group actions.  A monoid $S$ is (left) linearly ordered if for every $s, t \in S$, one of them is a (left) multiple of the other.  

\textbf{Comment on cardinal bound notation:}  we work with concepts from several fields that, unfortunately,  have inconsistent conventions for
cardinal bound notation.  
\begin{enumerate*}
	\item \textbf{Set-theoretic closure:} we write $<\!\lambda$-\emph{closed} for
	closure under sequences of length strictly less than $\lambda$, to avoid the
	ambiguity of ``$\lambda$-closed" in the set-theoretic literature (where it sometimes means closure under sequences of length $\lambda$, and elsewhere it means sequences of length strictly less than $\lambda$).

	\item \textbf{Category-theoretic bounds:} terms such as $\lambda$-presentable,
	$\lambda$-generated, $\lambda$-directed, and $\lambda$-pure are used with their
	standard meaning as in Ad\'amek--Rosick\'y~\cite{MR1294136}, in which the bound is
	already, by definition, with respect to subsets/subobjects of size $<\lambda$
	(e.g.\ a poset is $\lambda$-directed iff every subset of size $<\lambda$ has an
	upper bound). We do not add a ``$<$" to these terms, because it is not the
	convention in \cite{MR1294136}.

	\item \textbf{The Kaplansky property:} the original 
	$\lambda$-Kaplansky property (see page \pageref{eq_OrdinaryStatKap_NoModels}) was defined in terms of subsets and submodules of size $\lambda$, not $<\lambda$. 
	
\end{enumerate*}
We follow each source's own convention rather than reconcile them, and use ``$\le \lambda$ or ``$<\lambda$" to clarify when necessary.

\section{Stationary sets and Stationary Kaplansky classes}\label{sec_StatKaplansky}

\subsection{Stationary sets and classes}

The universe of sets is denoted $(V,\in)$.  For any set $N$, let $\mathfrak{N}$ denote the structure $(N,\in)$; here we really mean $N$ with the predicate $\{ (a,b) \ : \ a \in b, \ a \in N, \text{ and } b \in N   \}$.  For a metamathematical natural number $n$ we write $\mathfrak{N} \prec_n (V,\in)$ to mean that $\mathfrak{N}$ is elementary in the universe for all $\Sigma_n$ (and $\Pi_n$) formulas in the language of set theory.  The Levy-Montague Reflection Theorem, together with definability of $\Sigma_n$ truth (for fixed $n$, see \cite[Chapter 0]{MR1994835}), ensures that for any fixed $n$, there are plenty of such $\mathfrak{N}$.  For a cardinal $\theta$, $H_\theta$ denotes the set of those sets whose hereditary closure has cardinality $<\theta$.  $H_\theta$ is a set, and for uncountable $\theta$, $(H_\theta,\in) \prec_1 (V,\in)$.

For a regular uncountable cardinal $\kappa$ and a set $X$, $[X]^{<\kappa}$ (or $\wp_\kappa(X)$) denotes the collection of $<\kappa$-sized subsets of $X$.  The following notions were introduced by Jech:  a set $C \subseteq [X]^{<\kappa}$ is \textbf{closed unbounded in $\boldsymbol{[X]^{<\kappa}}$} if $C$ is $\subseteq$-cofinal in $[X]^{<\kappa}$, and $C$ is closed under unions of $\subset$-increasing chains of length $<\kappa$.  A set $S \subset [X]^{<\kappa}$ is \textbf{stationary in $\boldsymbol{[X]^{<\kappa}}$} if $S \cap C$ is nonempty for every closed unbounded $C \subset [X]^{<\kappa}$.  For $\kappa = \omega_1$, Kuecker~\cite{MR0457191} proved an extremely useful characterization of stationarity in $[X]^{<\omega_1}$: a set $S \subseteq [X]^{<\omega_1}$ is stationary in $[X]^{<\omega_1}$ if and only if for every $F: X^{<\omega} \to X$, there is an $N \in S$ that is closed under $F$.  For $\kappa \ge \omega_2$ the characterization is a bit more cumbersome, due to the (consistent) possibility of Chang's Conjecture holding, and there are two possibly distinct notions of stationarity:  Jech's stationarity defined above, which is what we will always mean by ``stationary", and another notion of stationarity due to Shelah.  See Foreman~\cite{MattHandbook} for a good comparison.  

$V$ will denote the universe of sets and $[V]^{<\kappa}$ will denote the (proper) class of all $<\kappa$-sized sets.  Given a class $\Gamma \subseteq [V]^{<\kappa}$, let's say \textbf{$\boldsymbol{\Gamma}$ is stationary in $\boldsymbol{[V]^{<\kappa}}$} if for every $<\kappa$-sized set $P$ there is some $N \in \Gamma$ such that 
\begin{equation}\label{eq_DefStatGamma}
P \subset \mathfrak{N} \prec_1 (V,\in) \text{ and } N \cap \kappa \text{ is an ordinal}.
\end{equation}
This is equivalent to asserting that there is a proper class of $\theta$ such that $\Gamma \cap [H_\theta]^{<\kappa}$ is stationary in the Jech sense.  The requirement that $N \cap \kappa$ be an ordinal is superfluous when $\kappa = \omega_1$.  The choice of $\Sigma_1$ elementarity in \eqref{eq_DefStatGamma} is irrelevant; the definition given above is equivalent to the one using, for any fixed $n\ge 1$, $\prec_n$ instead of $\prec_1$.\footnote{This is due to the fact that, for \textbf{fixed} metamathematical $n$, $\Sigma_n$ truth is definable in $(V,\in)$.  Combined with the L\'evy-Montague Reflection Theorem, this yields a closed unbounded class of cardinals $\theta$ such that $(H_\theta,\in) \prec_n (V,\in)$. }

\begin{fact}\label{fact_Equiv_Stat}
For a regular uncountable $\kappa$, a set $X$, and a set $S \subseteq [X]^{<\kappa}$:  $S$ is stationary in $[X]^{<\kappa}$ if and only if 
\[
\text{Lift}^V(S):= \{  N \in [V]^{<\kappa} \ :  \ N \cap X \in S  \}
\]
is stationary in $[V]^{<\kappa}$.
\end{fact}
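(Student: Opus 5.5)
We prove both directions by passing through a structure $H_\theta$ for a suitably large $\theta$. We use the standard projection and lifting facts for Jech's club filter. If $X \subseteq Y$ and $C$ is club in $[X]^{<\kappa}$, then $\{M \in [Y]^{<\kappa} : M \cap X \in C\}$ contains a club of $[Y]^{<\kappa}$. Conversely, if $D$ is club in $[Y]^{<\kappa}$, then $\{M\cap X : M \in D\}$ contains a club of $[X]^{<\kappa}$. We also use the standard fact that for uncountable $\theta$ with $X \in H_\theta$, the set $\{M \in [H_\theta]^{<\kappa} : (M,\in)\prec (H_\theta,\in),\ M\cap\kappa \in \kappa\}$ contains a club of $[H_\theta]^{<\kappa}$. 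This holds because $\kappa$ is regular: closure under Skolem functions together with the function $\alpha \mapsto \alpha$ for $\alpha<\kappa$, i.e.\ adding each $\alpha \in M\cap\kappa$ as a subset, is a club condition.

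\textbf{($\Rightarrow$).} Assume $S$ is stationary in $[X]^{<\kappa}$, and let $P$ be a $<\kappa$-sized set. Choose $\theta$ regular and large enough that $X, P \in H_\theta$. Then $(H_\theta,\in)\prec_1 (V,\in)$. Consider
\[
C := \{M \in [H_\theta]^{<\kappa} : P\cup\{X\} \subseteq M,\ (M,\in)\prec (H_\theta,\in),\ M\cap\kappa\in\kappa\}.
\]
This contains a club, using the Skolem hull closure together with the observation that for $\alpha<\kappa$, $\alpha\subseteq M$ when $\alpha \in M$ is a club condition. The projection $\{M\cap X : M\in C\}$ therefore contains a club of $[X]^{<\kappa}$, so it meets $S$. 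This yields $N\in C$ with $N\cap X\in S$. The elementarity $\mathfrak N\prec H_\theta\prec_1 V$ gives $\mathfrak N\prec_1 V$, and $N\cap \kappa$ is an ordinal. So $N\in \mathrm{Lift}^V(S)$ witnesses \eqref{eq_DefStatGamma} for $P$.

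\textbf{($\Leftarrow$).} Assume $\mathrm{Lift}^V(S)$ is stationary in $[V]^{<\kappa}$, and let $C\subseteq[X]^{<\kappa}$ be club. We need $N$ with $N\cap X\in C$. The standard route is to fix $F: [X]^{<\omega}\to [X]^{<\kappa}$, a function whose closure points are all in $C$ modulo the $N\cap\kappa\in\kappa$ condition. Here I would use Menas' theorem: every club in $[X]^{<\kappa}$ contains $\{x : x\cap\kappa\in\kappa,\ x \text{ closed under } F\}$ for some $F:[X]^{<\omega}\to[X]^{<\kappa}$. Take $P := \{X, C\}$, and apply the hypothesis to get $N$ with $P\subseteq N$, $\mathfrak N\prec_1 V$, $N\cap\kappa\in\kappa$, and $N\cap X\in S$.

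The main obstacle is showing that $N\cap X\in C$ using only $\Sigma_1$ elementarity. First, $F$ can be chosen definably from $C$, e.g.\ as a witness in $H_\theta$. The hypothesis is insensitive to $n$ by the footnote, so we may ask instead for $\mathfrak N\prec_n V$ with $n$ large enough to define Menas' $F$ canonically. Alternatively, since $F\in H_\theta$ for some $\theta$ and the statement ``there exists such an $F$'' is $\Sigma_1$ in the parameter $C$, elementarity supplies some $F\in N$. Then for each finite $e\subseteq N\cap X$, we have $F(e)\in N$, and $F(e)$ has size $<\kappa$. Since $|F(e)|\in N\cap\kappa$ is an ordinal below $\kappa$, and $N$ contains a bijection witnessing the size, we get $F(e)\subseteq N$. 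Hence $N\cap X$ is closed under $F$, so $N\cap X\in C\cap S$. Making this small-set-containment argument work with only $\Sigma_1$ elementarity, using $N\cap\kappa\in\kappa$, is the delicate point.
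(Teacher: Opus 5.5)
Your proof is correct and is essentially the paper's argument. The forward direction uses the same club of $N\prec H_\theta$ with $N\cap\kappa\in\kappa$; you project that club down to $[X]^{<\kappa}$, whereas the paper lifts $S$ up to $[H_\theta]^{<\kappa}$. The backward direction is the same ``put the club into $N$ and use $N\cap\kappa\in\kappa$'' argument. Your Menas' theorem step packages what the paper gets directly from elementarity and the club property of $D$: in effect, $N\cap X=\bigcup(D\cap N)$ is a directed union of fewer than $\kappa$ members of $D$, each a subset of $N$.

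Two bookkeeping fixes are needed. First, put $\kappa$, $[X]^{<\omega}$ and $[X]^{<\kappa}$ into $P$, so that the statements you reflect really are $\Sigma_1$ with parameters in $N$; this includes the existence of some $\mu<\kappa$ and a bijection from $\mu$ onto $F(e)$. Second, drop the clause $x\cap\kappa\in\kappa$ from your form of Menas' theorem. It is superfluous when $F$ takes values in $[X]^{<\kappa}$, and as stated it would need $X\supseteq\kappa$ to apply to $N\cap X$.
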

\begin{proof}
The fact is well-known but we could not find an exact reference with this formulation.  Suppose $S$ is stationary in $[X]^{<\kappa}$. Fix any set $P$ of size $<\kappa$ and any regular $\theta$ with $P,X \in H_\theta$.  Then $\text{Lift}^{H_\theta}(S):= \{ N \in [H_\theta]^{<\kappa} \ : \ N \cap X \in S \}$ is stationary in $[H_\theta]^{<\kappa}$; so in particular it meets the club of $N \in [H_\theta]^{<\kappa}$ such that $P \cup \{ X \} \subset N \prec (H_\theta,\in)$ and $N \cap \kappa \in \kappa$.  Since $(H_\theta,\in) \prec_1 (V,\in)$, this implies $N \prec_{1} (V,\in)$ and shows that $\text{Lift}^V(S)$ is stationary in $[V]^{<\kappa}$.  For the other direction, suppose $S$ is a nonstationary subset of $[X]^{<\kappa}$; then there is a club $D \subset [X]^{<\kappa}$ with $D \cap S = \emptyset$.  If $\text{Lift}^V(S)$ were stationary in $[V]^{<\kappa}$ there would be an $N \in [V]^{<\kappa}$ with $\{ D, S,X \} \subset N \prec_1 (V,\in)$, $N \cap \kappa \in \kappa$, and $N \cap X \in S$.  But since $D$ is club in $[X]^{<\kappa}$ and $D,X \in N \prec_1 (V,\in)$, it follows by elementarity of $N$ and closed unbounded property of $D$ that $N \cap X \in D$.  So $D \cap S$ is nonempty, a contradiction.
\end{proof}


Borrowing some terminology from Shelah's Stationary Logic, we sometimes write
\[
\text{stat}_{<\kappa} N \ \  \Phi(N,\dots)
\]
to mean that $\big\{ N \in [V]^{<\kappa} \ : \ \Phi(N,\dots) \big\}$ is stationary in $[V]^{<\kappa}$, and similarly for $\text{stat}_{\le \kappa}$ (which is the same as $\text{stat}_{<\kappa^+}$).

For an infinite regular cardinal $\kappa$ and a set $N$, \textbf{$\boldsymbol{N}$ is $\boldsymbol{<\kappa}$-closed} if every $<\kappa$-sized subset of $N$ is also an \emph{element} of $N$; i.e., $[N]^{<\kappa} \subset N$.  Any $N \prec_0 (V,\in)$ is automatically closed under finite sequences, so the notion of $<\kappa$-closure for partially elementary submodels of the universe only has real content when $\kappa \ge \omega_1$.  

\begin{fact}\label{fact_LessKappaClosedStat}
Suppose $\kappa$ is an infinite regular cardinal.  Then for any cardinal $\lambda \ge \kappa$ with $\lambda^{<\kappa} = \lambda$, the $<\kappa$-closed structures are a stationary subclass of $[V]^{\lambda}$.  If, additionally, $\lambda$ is regular, $\kappa < \lambda$, and 
\begin{equation}\label{eq_LessKappaClosureAll}
\mu^{<\kappa} < \lambda \text{ for all }\mu < \lambda,
\end{equation}
 then the $<\kappa$-closed structures are stationary in $[V]^{<\lambda}$. \end{fact}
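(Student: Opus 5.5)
Both parts will be proved by a closure argument: given an arbitrary small set $P$, build a structure $N \supseteq P$ that is $\Sigma_1$-elementary, is $<\kappa$-closed, and has the required size and intersection with $\kappa$. By analogy with \eqref{eq_DefStatGamma}, "stationary subclass of $[V]^{\lambda}$" means that for every set $P$ of size $\le\lambda$ there is a $<\kappa$-closed $N$ with $|N|=\lambda$, $P \subseteq \mathfrak{N} \prec_1 (V,\in)$, and $N\cap\lambda^+$ an ordinal. For $[V]^{<\lambda}$ we instead take $|P|<\lambda$, $|N|<\lambda$ and require $N\cap\lambda \in \lambda$.

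\textbf{First part.} Fix $P$ with $|P|\le\lambda$ and a regular $\theta$ large enough that $P\in H_\theta$; recall that $(H_\theta,\in)\prec_1(V,\in)$. We build a continuous $\subseteq$-increasing chain $\langle N_i : i<\kappa\rangle$ of elementary substructures of $(H_\theta,\in)$, each of size $\lambda$.
\begin{itemize}
\item Let $N_0 \prec H_\theta$ with $P\cup\lambda\subseteq N_0$ and $|N_0|=\lambda$, given by downward L\"owenheim--Skolem.
\item Given $N_i$, note that $[N_i]^{<\kappa}$ has size $\lambda^{<\kappa}=\lambda$. Let $N_{i+1}\prec H_\theta$ have size $\lambda$ and contain $N_i\cup[N_i]^{<\kappa}\cup\{N_i\}$.
\item At limits, take unions; a union of a chain of elementary substructures is elementary.
\end{itemize}
Put $N=\bigcup_{i<\kappa}N_i$. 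Then $N\prec H_\theta$, so $\mathfrak N\prec_1 V$, and $|N|=\lambda\cdot\kappa=\lambda$.

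$N$ is $<\kappa$-closed: since $\kappa$ is regular, any $x\in[N]^{<\kappa}$ is contained in some $N_i$, so $x\in N_{i+1}$.

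For the ordinal condition, $\lambda\subseteq N_0\subseteq N$. Each $\alpha\in N$ with $\lambda\le\alpha<\lambda^+$ has a bijection $\lambda\to\alpha$ in $N$ by elementarity. That bijection has range contained in $N$, so $\alpha\subseteq N$. Hence $N\cap\lambda^+$ is an ordinal. (This is the only requirement beyond $<\kappa$-closure, and it follows automatically from $\lambda\subseteq N$.)

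\textbf{Second part.} Fix $P$ with $|P|<\lambda$ and run the same construction with sizes $<\lambda$. Let $N_0\prec H_\theta$ contain $P$ with $|N_0|=\mu_0<\lambda$. At successors, $|[N_i]^{<\kappa}|\le\mu_i^{<\kappa}<\lambda$ by \eqref{eq_LessKappaClosureAll}, so $N_{i+1}$ can also be taken of size $<\lambda$. In addition, make $N_{i+1}$ contain the ordinal $\sup(N_i\cap\lambda)$; this is $<\lambda$ because $\lambda$ is regular and $|N_i|<\lambda$.

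Since $\kappa<\lambda=\mathrm{cf}(\lambda)$, the union $N$ of the $\kappa$-chain still has size $<\lambda$, and $N$ is $<\kappa$-closed exactly as before.

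The main obstacle is arranging $N\cap\lambda\in\lambda$, which the chain alone does not give. Elementarity only yields: if $\gamma\in N\cap\lambda$ and $|\gamma|\subseteq N$, then $\gamma\subseteq N$. So the key step is showing $\delta:=N\cap\kappa$ is an ordinal and then bootstrapping upward.
\begin{itemize}
\item \emph{$N\cap\kappa$ is an ordinal.} Every $\beta<\kappa$ has $|\beta|<\kappa$, and $\beta\in N$ implies $\beta\subseteq N$ by $<\kappa$-closure together with elementarity. Alternatively, put $\kappa\subseteq N_0$ from the start.
\item \emph{First fallback.} Put $\mu_0$ into $N_0$ and let each $N_{i+1}$ contain $|N_i|$ as a subset. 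Then every cardinal $\nu\in N\cap\lambda$ has $\nu\subseteq N$, by induction on the cardinals of $N$ using bijections. This makes $N\cap\lambda$ an ordinal below $\lambda$.
\item \emph{Second fallback.} If that bookkeeping becomes tangled, let $N_{i+1}$ contain $\sup(N_i\cap\lambda)$ as a subset. This keeps $|N_{i+1}|<\lambda$ and forces $N\cap\lambda=\sup_i\sup(N_i\cap\lambda)$, an ordinal.
\end{itemize}
I expect this interleaving of ordinal-closure with $<\kappa$-closure to be the only delicate point.
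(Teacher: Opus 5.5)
Your construction is the paper's own. You build a continuous chain $\langle N_i : i<\kappa\rangle$ of elementary substructures of $(H_\theta,\in)$ with $[N_i]^{<\kappa}\subseteq N_{i+1}$. Regularity of $\kappa$ gives $<\kappa$-closure of the union, and $\lambda^{<\kappa}=\lambda$ (resp.\ \eqref{eq_LessKappaClosureAll} together with $\kappa<\lambda=\mathrm{cf}(\lambda)$) bounds the size. The argument is correct, and it is more explicit than the paper's sketch about the requirement that $N\cap\lambda^+$ (resp.\ $N\cap\lambda$) be an ordinal.

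One correction: in the second part, your ``first fallback'' is false and should be deleted. Take GCH with $\kappa=\omega_1$ and $\lambda=\omega_3$. The ordinal $\omega_2$ is definable in $H_\theta$, so it lies in $N_0$. Yet your bookkeeping (adding $|N_i|$ as a subset) lets every $N_i$ with $i\geq 1$ have size $\aleph_1$, so $\omega_2\in N$ but $\omega_2\not\subseteq N$. Likewise, putting $\sup(N_i\cap\lambda)$ into $N_{i+1}$ merely as an \emph{element} achieves nothing.

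The second fallback is the right bookkeeping. Adding $\sup(N_i\cap\lambda)$ as a \emph{subset} of $N_{i+1}$ costs fewer than $\lambda$ elements by regularity of $\lambda$. Since $N_i\cap\lambda$ has no largest element, every $\gamma\in N\cap\lambda$ lies below some $\sup(N_i\cap\lambda)\subseteq N$. Hence $N\cap\lambda$ is an ordinal, and it is $<\lambda$ because $|N|<\lambda$.
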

The fact is well-known but we sketch a proof of the first part.  Fix any set $P$ of size $\lambda$ and a cardinal $\theta$ with $\lambda, P \in H_\theta$ and $\theta^{<\kappa} = \theta$.  Consider the set structure $\mathfrak{A}:=(H_\theta,\in)$ which, as mentioned above, is always $\Sigma_1$-elementary in $(V,\in)$.  Use the Downward L\"owenheim-Skolem Theorem, together with the cardinal arithmetic assumption, to recursively build a $\subseteq$-increasing and continuous chain $\langle N_i \ : \ i \le \kappa \rangle$ of $\lambda$-sized elementary substructures of $\mathfrak{A}$ with $[N_i]^{<\kappa} \subset N_{i+1}$ for each $i<\kappa$.  Regularity of $\kappa$ then ensures $N_\kappa$ is $<\kappa$-closed.  The proof of the second part is similar, except each $N_i$ is of size $<\lambda$, \eqref{eq_LessKappaClosureAll} ensures the size of $N_{i+1}$ is $<\lambda$, and $\kappa < \lambda$ and regularity of $\lambda$ ensure $N_\kappa \in [V]^{<\lambda}$.

\subsection{Uniformly Stationary Kaplansky Classes}

For a ring $R$ and an infinite cardinal $\mu \ge |R|$, an isomorphism-closed class $\mathbf{K}$ of $R$-modules is a \textbf{$\boldsymbol{\mu}$-Kaplansky class} (\cite{MR1926201}, \cite{MR2988573}) if for every $K \in \mathbf{K}$ and every subset $X$ of $K$ with $|X| \le \mu$, there is a submodule $A$ of $K$ of size $\le \mu$ such that $A \supseteq X$, and both $A$ and $K/A$ are in $\mathbf{K}$.  Put another way, letting
\[
\mathbf{K}(K,\mu):=\left\{ A \in [K]^{\le \mu}  \ : \  A \text{ and } K/A \text{ are both in } \mathbf{K} \right\},
\]
a class $\mathbf{K}$ is $\mu$-Kaplansky if 
\begin{equation}
\forall K \in \mathbf{K}: \ \mathbf{K}(K,\mu) \text{ is $\subseteq$-cofinal in } [K]^{\le \mu}.
\end{equation}

As we'll see, all commonly-studied $\mu$-Kaplansky classes actually satisfy the stronger requirement that 
\begin{equation}\label{eq_OrdinaryStatKap_NoModels}
\forall K \in \mathbf{K}: \ \mathbf{K}(K,\mu) \text{ is stationary in } [K]^{\le \mu}
\end{equation}
in the sense of Jech discussed above.  A class $\mathbf{K}$ with property \eqref{eq_OrdinaryStatKap_NoModels} will be called a \textbf{$\boldsymbol{\mu}$-Stationary Kaplansky Class}.

In what follows, we often confuse a $\Sigma_1$-elementary subset $N$ of the universe with its associated structure $\mathfrak{N}  = (N,\in)$, mainly to typographically distinguish it from the algebraic objects under consideration.  Using Fact \ref{fact_Equiv_Stat} and the ``stat" quantifier convention introduced earlier, the $\mu$-Stationary Kaplansky Property of $\mathbf{K}$ is equivalent to:  
\begin{equation}\label{eq_NonDiagStatKap}
(\forall K \in \mathbf{K}) \ (\text{stat}_{\le \mu} \mathfrak{N} ) \left(  \mathfrak{N} \cap K \text{ and } \frac{K}{\mathfrak{N} \cap K} \text{ are both in } \mathbf{K} \right).
\end{equation}

In fact, we'll see that the commonly-studied $\mu$-Kaplansky classes satisfy the apparently stronger property obtained by reversing the quantifier order of \eqref{eq_NonDiagStatKap} and replacing $\forall K \in \mathbf{K}$ by $\forall K \in \mathfrak{N} \cap \mathbf{K}$:
\begin{equation}\label{eq_DiagonalStatKap}
(\text{stat}_{\le \mu} \mathfrak{N}) \ (\forall K \in \mathfrak{N} \cap \mathbf{K} ) \left(   \ \mathfrak{N} \cap K \text{ and } \frac{K}{\mathfrak{N} \cap K} \text{ are both in } \mathbf{K} \right).
\end{equation}

\noindent Notice the uniform nature of  \eqref{eq_DiagonalStatKap}: the stationary class no longer depends on the particular $K \in \mathbf{K}$ as it did in \eqref{eq_NonDiagStatKap}, and every $\mathfrak{N}$ in the stationary class in \eqref{eq_DiagonalStatKap} has the desired property with respect to every member of $\mathbf{K}$ that is an element of $\mathfrak{N}$.  We will see that property \eqref{eq_DiagonalStatKap} is extremely useful when one has to deal with many objects in $\mathbf{K}$ simultaneously.   A class $\mathbf{K}$ with property \eqref{eq_DiagonalStatKap} will be called a \textbf{$\boldsymbol{\mu}$-Uniformly Stationary Kaplansky class}, and similarly for $<\mu$ in place of $\mu$.  The Uniformly Stationary Kaplansky property trivially implies the Stationary Kaplansky property,\footnote{Since a given object $K$ is a member of almost every $\mathfrak{N}$ in the stationary class $\Gamma$.} and we conjecture they are not equivalent.  If $\Gamma$ is a stationary subclass of $[V]^{<\kappa}$ we will say that \textbf{$\mathbf{K}$ has the Kaplansky Property on $\boldsymbol{\Gamma}$} if
\[
(\forall \mathfrak{N} \in \Gamma) \ (\forall K \in \mathfrak{N} \cap \mathbf{K})  \ \left( \text{both } \mathfrak{N} \cap K \text{ and } \frac{K}{\mathfrak{N} \cap K} \text{ are in } \mathbf{K} \right).
\] 
So $\mathbf{K}$ has the $<\kappa$-Uniformly Stationary Kaplansky property if it has the Kaplansky Property on some stationary $\Gamma \subset [V]^{<\kappa}$.

While the (non-uniform) Stationary Kaplansky Property can be expressed as in \eqref{eq_OrdinaryStatKap_NoModels} without reference to partially elementary submodels of the universe, it is difficult to see how one would even rephrase the Uniformly Stationary Kaplansky property in \eqref{eq_DiagonalStatKap} without them (or without closely related concepts, such as Stationary Logic over large set fragments of $V$).  

Kaplansky classes and their stationary versions make sense in any Grothendieck category \cite{MR2342555}, and in certain non-additive categories, such as the category $S$-Act of (left) actions of a monoid $S$ (or more generally, presheaf categories).  In that setting, if $A$ is a subact of $K$, $K/A$ refers to the Rees quotient.

A class $\mathcal{D}$ of $R$-modules is $<\kappa$\emph{-deconstructible} for a regular $\kappa > |R|$ if it is the ``filtration closure" of its $<\kappa$-sized members.\footnote{We use the definition as in Mazari-Armida and Trlifaj~\cite{MA_Tflifaj_Decon} and other recent literature on the subject.  Older papers only required $\mathcal{D}$ to be contained in the filtration closure of its $<\kappa$-sized members.}  We will not need the exact definition here; the key point is that deconstructible classes are Kaplansky classes, as shown in  \cite[Lemma 6.7]{MR2900444}, and this remains true if we strengthen \emph{Kaplansky} to \emph{Uniformly Stationary Kaplansky}:
\begin{lemma}[Deconstructibility implies Uniformly Stationary Kaplansky]\label{lem_DeconImpliesUnifStatKap}
If $|R| < \kappa$ and $\mathcal{D}$ is a $<\kappa$-deconstructible class of $R$-modules, then $\mathcal{D}$ is $<\lambda$-Uniformly Stationary Kaplansky for all regular cardinals $\lambda \ge \kappa$.
\end{lemma}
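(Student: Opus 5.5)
We will show that $\mathcal{D}$ has the Kaplansky Property on the class $\Gamma$ of all $\mathfrak{N} \in [V]^{<\lambda}$ with $\mathfrak{N} \prec_n (V,\in)$ (for a suitable fixed $n$) and $N \cap \lambda \in \lambda$. This class is stationary in $[V]^{<\lambda}$ by the Downward L\"owenheim--Skolem argument sketched after Fact \ref{fact_LessKappaClosedStat}. Since $|R|<\kappa\le\lambda$ and $N\cap\lambda$ is an ordinal, $N \cap \kappa = \kappa$, so $\kappa \subseteq N$. Also $R \in N$ by elementarity, since $R$ is definable from parameters we may assume lie in $N$. Hence $R \subseteq N$ and every $<\kappa$-sized set that is an element of $N$ is a subset of $N$.

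Fix $\mathfrak{N}\in\Gamma$ and $K \in N\cap\mathcal{D}$. By $<\kappa$-deconstructibility there is a filtration $\langle K_\alpha : \alpha\le\sigma\rangle$ of $K$ whose successive factors $K_{\alpha+1}/K_\alpha$ are isomorphic to members of $\mathcal{D}$ of size $<\kappa$. By elementarity we can choose such a filtration inside $N$, together with a choice of $<\kappa$-sized sets $X_\alpha$ with $K_{\alpha+1}=K_\alpha+\langle X_\alpha\rangle$. This is the key point: the filtration is taken in $N$, uniformly for every $K\in N$, which is what makes the stationary class independent of $K$.

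Let $S := N\cap\sigma$. For $\alpha\in S$ we have $X_\alpha\in N$ and $|X_\alpha|<\kappa$, so $X_\alpha\subseteq N$. We then run the standard Hill-lemma style argument (as in \cite[Lemma 6.7]{MR2900444}), with $S$ serving as the ``closed'' set of indices. The claims to verify are:
\begin{enumerate*}[label=(\alph*)]
\item $N\cap K=\sum_{\alpha\in S}\langle X_\alpha\rangle$;
\item $N \cap K$ is filtered by the submodules $\sum_{\alpha\in S\cap\beta}\langle X_\alpha\rangle$, with factors isomorphic to $K_{\alpha+1}/K_\alpha$;
\item $K/(N\cap K)$ is filtered by the images of the $K_\beta$, with factors either $0$ or isomorphic to $K_{\alpha+1}/K_\alpha$ for $\alpha\notin S$.
\end{enumerate*}
For (a), ``$\supseteq$'' holds because $N\cap K$ is a submodule: $N$ is closed under the module operations and $R\subseteq N$. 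For ``$\subseteq$'', given $x\in N\cap K$, the least $\alpha$ with $x\in K_{\alpha+1}$ is definable in $N$. Writing $x$ as an element of $K_\alpha + \langle X_\alpha\rangle$ and inducting downward through finitely many terms, all witnesses can be picked in $N$.

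The main obstacle is (b) and (c): we need the hull-like closure $N\cap K_{\alpha+1} = (N\cap K_\alpha)+\langle X_\alpha\rangle$ for $\alpha\in S$, and $N\cap K_\beta = N \cap K_{\beta'}$ across gaps of $S$. I would establish the identity $N\cap K_\beta=\sum_{\alpha\in S\cap\beta}\langle X_\alpha\rangle$ for all $\beta$ by induction on $\beta$, using elementarity as in (a). With this identity, the factor $(N\cap K_{\alpha+1})/(N\cap K_\alpha)$ for $\alpha \in S$ is a quotient of $K_{\alpha+1}/K_\alpha$, and we must check it is an isomorphism. That requires $K_\alpha\cap\bigl((N\cap K_\alpha)+\langle X_\alpha\rangle\bigr)=N\cap K_\alpha$, which follows since any element of the left side lies in $N$. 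For (c), the quotient filtration has factors $K_{\alpha+1}/(K_\alpha+N\cap K_{\alpha+1})$, which is $0$ if $\alpha\in S$. If $\alpha\notin S$, the identity gives $N\cap K_{\alpha+1}=N\cap K_\alpha\subseteq K_\alpha$, so the factor is $K_{\alpha+1}/K_\alpha$. Continuity at limits is immediate from the identity.

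Since $\mathcal{D}$ is the filtration closure of its $<\kappa$-sized members, both $N\cap K$ and $K/(N\cap K)$ lie in $\mathcal{D}$. This gives the Kaplansky Property on $\Gamma$.
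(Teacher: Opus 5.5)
Your argument is correct. It is in substance a direct proof of what the paper obtains by citation. The paper's proof just invokes \cite[Theorem 1.1]{Cox_MaxDecon}: for large $\theta$, the $\mathfrak{N}\in[H_\theta]^{<\lambda}$ having the Kaplansky property for every $D\in\mathfrak{N}\cap\mathcal{D}$ contain a club. You instead reprove the needed direction by running Hill's lemma with $S=N\cap\sigma$ as the closed index set. Closedness holds because $K_\alpha\cap\langle X_\alpha\rangle\subseteq N\cap K_\alpha=\sum_{\gamma\in S\cap\alpha}\langle X_\gamma\rangle$. Your route is self-contained and shows that every sufficiently elementary $\mathfrak{N}$ with $N\cap\lambda\in\lambda$ (containing the relevant parameters) works, which is as strong as the club statement. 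The paper's route is shorter but outsources exactly this computation.

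One step needs correcting. The assertion $N\cap\kappa=\kappa$ is false when $\lambda=\kappa$, a case the lemma covers and which is exactly the case used to pass from $<\kappa$-deconstructibility to the $<\kappa$-Uniformly Stationary Kaplansky property. There, $|N|<\kappa$ forces $N\cap\kappa$ to be an ordinal below $\kappa$.

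What you actually use is only that every $X\in N$ with $|X|<\kappa$ satisfies $X\subseteq N$. This follows from transitivity of $N\cap\kappa=(N\cap\lambda)\cap\kappa$ alone. By elementarity there are $\nu\in N\cap\kappa$ and a surjection $g\in N$ from $\nu$ onto $X$. Then $\nu\subseteq N$, so $X=g[\nu]\subseteq N$. With this replacement you still get $R\subseteq N$ and $X_\alpha\subseteq N$ for $\alpha\in S$, and the rest of your argument goes through unchanged. That includes the key equality $N\cap K_{\alpha+1}=N\cap K_\alpha$ for $\alpha\notin S$, which holds because the least $\gamma$ with $x\in K_{\gamma+1}$ lies in $N$.

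A minor convenience: put into $N$ a set of representatives of the $<\kappa$-sized members of $\mathcal{D}$. Then the existence of a filtration of $K$ by those representatives is a $\Sigma_1$ statement, so $\prec_1$ suffices and you need not tune $n$ to a definition of $\mathcal{D}$.
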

\begin{proof}
Fix a regular $\lambda \ge \kappa$; then trivially $\mathcal{D}$ is also $<\lambda$-deconstructible.  By \cite[Theorem 1.1]{Cox_MaxDecon}, for all large enough $\theta$, the set
\[
\left\{ \mathfrak{N} \in [H_\theta]^{<\lambda} \  : \ \forall D \in \mathfrak{N} \cap \mathcal{D} \ \ \mathfrak{N} \cap D \text{ and } \frac{D}{\mathfrak{N} \cap D} \text{ are both in } \mathcal{D}  \right\} 
\] 
contains a closed unbounded subset of $[H_\theta]^{<\lambda}$.  In particular, it is stationary in $[H_\theta]^{<\lambda}$.
\end{proof}

While $<\kappa$-deconstructibility trivially implies $<\lambda$-deconstructibility for all $\lambda \ge \kappa$, we don't know whether the Uniformly Stationary Kaplansky Property has a similar upward transfer.  This is the reason for the additional assumption about $\lambda$ in the statement of Theorem \ref{thm_MainApplications}.  All $<\kappa$-Uniformly Stationary Kaplansky Classes we are aware of are also $\lambda$-Uniformly Stationary Kaplansky whenever $\lambda^{<\kappa} = \lambda$.

The class $\mathcal{FM}$ is non-deconstructible over, for example, the ring $\mathbb{Z}$.  We'll see (Lemma \ref{lem_FM_UnifStatKap}) that $\mathcal{FM}$ is Uniformly Stationary Kaplansky.  Together with Lemma \ref{lem_DeconImpliesUnifStatKap}, this tells us the Uniform Stationary Kaplansky property is strictly weaker than deconstructibility.

\section{Categories of $\mathcal{L}$-structures and restrictions to elementary submodels}

If $\mathcal{C}$ is a locally $\lambda$-presentable category and $\mathfrak{N} \prec (V,\in)$ has access to a representative set $\text{Pres}_\lambda(\mathcal{C})$ of the $\lambda$-presentable objects of $\mathcal{C}$, then one can use a restricted Yoneda embedding together with reflection functors in the other direction to define ``restrictions'' like $f \restriction \mathfrak{N}$ whenever $f \in \mathfrak{N} \cap \text{Mor} \mathcal{C}$, even if $\mathfrak{N}$ is not $<\lambda$-closed.  This was used in \cite[Section 4]{CoxKamsmaRosicky_cellular} to completely characterize cellular and cofibrant generation in locally presentable categories, by how such restrictions behave.  In the current paper, we choose to work exclusively with subcategories of $\mathcal{L}$-structures for some (sorted, possibly infinitary) first-order signature $\mathcal{L}$, where $f \restriction \mathfrak{N}$ has a more straightforward interpretation, at least if $\mathfrak{N}$ is closed enough to deal with possibly infinite arities in the $\mathcal{L}$ symbols (which is automatic if all the arities are finite).

If $\mathcal{L} = \left( S, (c_i)_i, (F_j)_j, (R_k)_k \right)$ is a sorted, possibly infinitary first order signature, $\textbf{Str} \boldsymbol{\mathcal{L}}$ denotes the category whose objects are the $\mathcal{L}$-structures, and whose morphisms are $\mathcal{L}$-homomorphisms.  For a ring $R$, the signature $\mathcal{L}_R$ of $R$-modules consists of the (1-sorted) signature of abelian groups augmented with a unary function symbol $\cdot_r$ for each $r \in R$.  For a monoid $S$, the signature $\mathcal{L}_S$ of $S$-acts consists of a unary function symbol $f_s$ for each $s \in S$.  Then $R$-Mod and $S$-Act can be viewed as full subcategories of $\text{Str} \mathcal{L}_R$ and $\text{Str} \mathcal{L}_S$, respectively.  In fact, any accessible category is equivalent to one of the form $\text{Mod}(T)$, viewed as a full subcategory of $\text{Str} \mathcal{L}$ for some basic theory $T$ in a sorted (possibly infinitary) signature $\mathcal{L}$ \cite{MR1294136}.

For a signature $\mathcal{L}$ and a set $N$ we say \textbf{$\boldsymbol{\mathfrak{N}=(N,\in)}$ is $\boldsymbol{\mathcal{L}}$-appropriate} if 
\begin{equation}\label{eq_Def_L_Appropriate}
\mathcal{L} \cup \{ \mathcal{L} \} \subset \mathfrak{N} \prec_1 (V,\in) \text{ and } \mathfrak{N} \text{ is } <\text{ar}(\mathcal{L}) \text{-closed},
\end{equation}
where $\textbf{ar}\boldsymbol{(\mathcal{L})}$ is the least infinite regular cardinal strictly bounding all arities of symbols from $\mathcal{L}$.  For example, if $\mathcal{L}$ is finitary (i.e. $\text{ar}(\mathcal{L}) = \omega$) then \emph{every} $\mathfrak{N} \prec_1 (V,\in)$ with $\mathcal{L} \cup \{ \mathcal{L} \} \subset \mathfrak{N}$ is $\mathcal{L}$-appropriate, because $\Sigma_1$ (or even $\Sigma_0$) elementary substructures of the universe are always closed under finite subsets.  On the other hand, if $\text{ar}(\mathcal{L}) = \omega_1$, $\mathcal{L}$-appropriateness of $\mathfrak{N}$ means $\mathcal{L} \cup \{ \mathcal{L} \} \subset \mathfrak{N} \prec_{1} (V,\in)$ and $[\mathfrak{N}]^\omega \subset \mathfrak{N}$; such an $\mathfrak{N}$ will necessarily have cardinality at least $2^\omega$.

If $\mathfrak{N}$ is $\mathcal{L}$-appropriate then by assumption, every $\mathcal{L}$-symbol is an element of $\mathfrak{N}$, and $\mathfrak{N}$ is closed under the arity of that symbol.  It follows that if $A$ is an $\mathcal{L}$-structure and $A$ is an \emph{element} of $\mathfrak{N}$, then for every $\mathcal{L}$-function symbol $\tau$, its interpretation $\tau^A$ is an element of $\mathfrak{N}$.  Furthermore, $\mathfrak{N} \cap A$ is closed under application of $\tau^A$ because if $\overline{a} \in (\mathfrak{N} \cap A)^{\text{arity}(\tau)}$ then $\overline{a}$ is an element of $\mathfrak{N}$; then because both $\tau^A$ and $\overline{a}$ are elements of $\mathfrak{N}$, so is $\tau^A(\overline{a})$.  Hence, $\mathfrak{N} \cap A$ is an $\mathcal{L}$-substructure of $A$.  In fact, the assumption $[\mathfrak{N}]^{<\text{ar}(\mathcal{L})} \subset \mathfrak{N}$ ensures that $\mathfrak{N} \cap A$ is an $L_{\text{ar}(\mathcal{L}),\text{ar}(\mathcal{L})}$-elementary substructure of $A$.  And if $f:A \to B$ is an $\mathcal{L}$-homomorphism with $f \in \mathfrak{N}$---by which we always mean $f$ (as a set of ordered pairs), $A$, and $B$ are all elements of $\mathfrak{N}$---then $f \restriction \mathfrak{N}$ is an $\mathcal{L}$-homomorphism from $\mathfrak{N} \cap A$ into $\mathfrak{N} \cap B$, yielding the following commutative square in $\text{Str} \mathcal{L}$.  

\begin{equation}\label{eq_S_square}
\begin{tikzcd}[ampersand replacement=\&]
	\& A \&\&\& B \\
	{\mathcal{S}(\text{Str}\mathcal{L}, f,\mathfrak{N}):} \\
	\& {\mathfrak{N} \cap A} \&\&\& {\mathfrak{N} \cap B}
	\arrow["{f \in \mathfrak{N} \cap \text{Mor} \left( \text{Str}\mathcal{L}\right)}", from=1-2, to=1-5]
	\arrow[hook, from=3-2, to=1-2]
	\arrow["{f \restriction \mathfrak{N} \in \text{Mor}\left( \text{Str} \mathcal{L} \right)}"', from=3-2, to=3-5]
	\arrow[hook, from=3-5, to=1-5]
\end{tikzcd}
\end{equation}

\begin{definition}\label{def_TraceInclusion}
An \textbf{$\boldsymbol{\mathcal{L}}$-trace inclusion} is an inclusion of the form $\mathfrak{N} \cap A \subset A$, where $\mathfrak{N}$ is $\mathcal{L}$-appropriate, $A$ is an $\mathcal{L}$-structure, and $A \in \mathfrak{N}$.  In other words---by considering $\text{id}_A: A \to A$, which is an element of $\mathfrak{N}$ if $A \in \mathfrak{N}$---an $\mathcal{L}$-trace inclusion is any vertical map in a square of the form $\mathcal{S} \left( \text{Str} \mathcal{L},f, \mathfrak{N} \right)$.  We may simply refer to these as \emph{trace inclusions} when $\mathcal{L}$ is clear from the context.
\end{definition}

$\mathcal{L}$-trace inclusions are always \emph{at least} $L_{\text{ar}(\mathcal{L}),\text{ar}(\mathcal{L})}$-elementary, because by definition we require $\mathfrak{N}$ to be $<\text{ar}(\mathcal{L})$-closed.  In fact, for any regular $\lambda$, if $\mathfrak{N}$ is $<\lambda$-closed and $\mathcal{L}$-appropriate, then the trace inclusions are $L_{\lambda,\lambda}$-elementary.

\section{The Uniformly Stationary L\"owenheim-Skolem property}

In order to state our main theorems in a way flexible enough to handle various classes of morphisms, and to deal with settings where no good analogue of ``quotient" is available, we generalize the Uniformly Stationary Kaplansky property from classes of objects, to arbitrary subcategories of $\text{Str} \mathcal{L}$.  In this setting the property more closely resembles the various L\"owenheim-Skolem (LS) properties from model theory, so we borrow that terminology.

\begin{definition}[Uniformly Stationary L\"owenheim-Skolem Property]\label{def_Stat_LS}
Suppose $\mathcal{K}$ is a (not necessarily full) subcategory of $\text{Str} \mathcal{L}$.  
\begin{itemize}
	\item A set $\mathfrak{N} = (N,\in)$ is \textbf{$\boldsymbol{\mathcal{K}}$-appropriate (in $\text{Str} \mathcal{L}$)} if it is $\mathcal{L}$-appropriate, and whenever $f \in \mathfrak{N} \cap \text{Mor} \mathcal{K}$, the square $\mathcal{S}(\text{Str}\mathcal{L}, f,\mathfrak{N})$ from page \pageref{eq_S_square} is in $\mathcal{K}$; i.e., \emph{if} the top arrow of $\mathcal{S}(\text{Str}\mathcal{L}, f,\mathfrak{N})$ is in $\mathcal{K}$, \emph{then} all the other arrows and objects in that square are in $\mathcal{K}$.

	\item If $\Gamma \subset [V]^{<\kappa}$ is a class of $\mathcal{L}$-appropriate structures, \textbf{$\boldsymbol{\mathcal{K}}$ has the LS Property on $\boldsymbol{\Gamma}$} means that every member of $\Gamma$ is $\mathcal{K}$-appropriate.

	\item $\mathcal{K}$ has the \textbf{$\boldsymbol{<\kappa}$-Uniformly Stationary LS Property} if it has the LS Property on some stationary subclass of $[V]^{<\kappa}$; i.e., 
	\[
(\text{stat}_{<\kappa} \mathfrak{N} ) ( \forall f \in \mathfrak{N} \cap \text{Mor} \mathcal{K}) \left( \text{ all objects and arrows in the square } \mathcal{S}(\text{Str} \mathcal{L}, f,\mathfrak{N}) \text{ are in } \mathcal{K} \right)
\]

\end{itemize}
\end{definition}

The \textbf{$\boldsymbol{\kappa}$-Uniformly Stationary LS Property} will just mean the \textbf{$\boldsymbol{<\kappa^+}$-Uniformly Stationary LS Property}.

Theorem \ref{thm_CharacterizeAccessibleCats} below isn't needed for any of the stability results, but it may be of independent interest.  A category is \textbf{$\boldsymbol{\kappa}$-accessible} if it has only set-many $\kappa$-presentable\footnote{An object $C$ is $\kappa$-presentable if for every $\kappa$-directed system $(C_i)_{i \in I}$, every morphism from $C$ to the colimit of the $C_i$'s factors uniquely through one of the $C_i$'s; see \cite{MR1294136} for details.} objects (up to isomorphism), is closed under $\kappa$-directed colimits, and every object is the colimit of some $\kappa$-directed system of $\kappa$-presentable objects.  Ad\'amek and Rosick\'y~\cite[Corollary 2.36]{MR1294136} showed that a category is accessible if and only if it is equivalent to some full subcategory $\mathcal{C}$ of some $\text{Str} \mathcal{L}$ such that $\mathcal{C}$ is closed under $\kappa$-directed colimits and $\kappa$-pure subobjects (for sufficiently large $\kappa$) in $\text{Str} \mathcal{L}$.  Theorem \ref{thm_CharacterizeAccessibleCats} shows that, in their characterization, one can replace closure under $\kappa$-pure subobjects with the $<\kappa$-Uniformly Stationary LS Property. In isolation, this property is apparently weaker than closure under $\kappa$-pure subobjects, at least for sufficiently closed $\kappa$ (see Lemma \ref{lem_EasyPureTraces}).

\begin{theorem}[Characterization of accessible categories]\label{thm_CharacterizeAccessibleCats}

Let $\mathcal{K}$ be any category.  The following are equivalent:
\begin{enumerate}
	\item\label{item_K_accessible} $\mathcal{K}$ is accessible.
	
	\item\label{item_equiv_full_sub}  $\mathcal{K}$ is equivalent to a full subcategory $\mathcal{C}$ of $\text{Str} \mathcal{L}$ for some $\mathcal{L}$, such that for some regular cardinal $\kappa$: 
	\begin{itemize}
	   \item $\mathcal{C}$ is closed under $\kappa$-directed colimits in $\text{Str} \mathcal{L}$, and 
	   \item $\mathcal{C}$ has the $<\kappa$-Uniformly Stationary LS Property in $\text{Str} \mathcal{L}$.   
	\end{itemize}

\end{enumerate}
\end{theorem}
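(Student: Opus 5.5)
We prove the two implications separately. For \eqref{item_K_accessible}$\Rightarrow$\eqref{item_equiv_full_sub}, we use the Ad\'amek--Rosick\'y characterization \cite[Corollary 2.36]{MR1294136}. It gives an equivalence of $\mathcal{K}$ with a full subcategory $\mathcal{C}$ of some $\text{Str}\mathcal{L}$ that is closed under $\kappa_0$-directed colimits and $\kappa_0$-pure subobjects. Both closure properties persist if $\kappa_0$ is raised to any larger regular $\kappa$: a $\kappa$-directed colimit is $\kappa_0$-directed, and a $\kappa$-pure subobject is $\kappa_0$-pure. So we choose a regular $\kappa \ge \kappa_0$ with $\kappa > \text{ar}(\mathcal{L})$ and $\mu^{<\text{ar}(\mathcal{L})}<\kappa$ for all $\mu<\kappa$ (for instance a successor $\kappa = \nu^+$ with $\nu^{<\text{ar}(\mathcal{L})}=\nu$).

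By Fact \ref{fact_LessKappaClosedStat}, applied with $\text{ar}(\mathcal{L})$ in place of $\kappa$ and $\kappa$ in place of $\lambda$, the $<\text{ar}(\mathcal{L})$-closed $\mathfrak{N}\prec_1 V$ with $\mathcal{L}\cup\{\mathcal{L}\}\subset\mathfrak{N}$ and $N\cap\kappa\in\kappa$ form a stationary $\Gamma\subset[V]^{<\kappa}$. We then show that $\mathcal{C}$ has the LS property on $\Gamma$, along the lines of Lemma \ref{lem_EasyPureTraces}. Let $f:A\to B$ be in $\mathcal{C}$ with $f\in\mathfrak{N}$. The square $\mathcal{S}(\text{Str}\mathcal{L},f,\mathfrak{N})$ automatically commutes in $\text{Str}\mathcal{L}$, and since $\mathcal{C}$ is full, it suffices to show that $\mathfrak{N}\cap A$ and $\mathfrak{N}\cap B$ lie in $\mathcal{C}$. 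For this we prove that the trace inclusion $\mathfrak{N}\cap A\subset A$ is $\kappa$-pure in $\text{Str}\mathcal{L}$. A positive-primitive formula with $<\kappa$ variables and parameters from $\mathfrak{N}\cap A$ that holds in $A$ is, as a whole, not necessarily an element of $\mathfrak{N}$, since $\mathfrak{N}$ is only $<\text{ar}(\mathcal{L})$-closed. This is the main obstacle. I would handle it by using the equational (finitary-per-symbol) nature of pp-formulas, which are conjunctions of atomic formulas each involving $<\text{ar}(\mathcal{L})$ variables. A witness is built by a Skolem-style recursion: each atomic conjunct is reflected into $\mathfrak{N}$ by $\Sigma_1$-elementarity and closure. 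Alternatively, one can strengthen $\Gamma$ to the $<\kappa$-closed structures in $[V]^{\kappa^{<\kappa}}$. If that is needed, I would instead pass to a larger $\kappa'$ and take $\mathfrak{N}$ to be $<\kappa$-closed of size $<\kappa'$, so that the trace inclusion is $L_{\kappa,\kappa}$-elementary and in particular $\kappa$-pure. The statement only asks for \emph{some} regular cardinal, and this increase is harmless for the colimit condition.

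For \eqref{item_equiv_full_sub}$\Rightarrow$\eqref{item_K_accessible}, we may assume $\mathcal{K}=\mathcal{C}$. We again enlarge $\kappa$ so that $\kappa>\text{ar}(\mathcal{L})$, which keeps closure under $\kappa$-directed colimits. I would take as candidate generators the set of $\mathcal{C}$-objects of cardinality $<\kappa$, up to isomorphism (more precisely, of cardinality $<\kappa'$ for a suitable $\kappa'$). For any $A\in\mathcal{C}$, fix a regular $\theta$ with $A\in H_\theta$, and let $\Gamma$ be the stationary class on which $\mathcal{C}$ has the LS property. The set $I=\{\mathfrak{N}\cap H_\theta : \mathfrak{N}\in\Gamma,\ A\in\mathfrak{N}\}$ is stationary, hence cofinal, in $[H_\theta]^{<\kappa}$. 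Ordered by inclusion of the traces $\mathfrak{N}\cap A$, the system is $\kappa$-directed: given $<\kappa$ many members, stationarity lets us find a single $\mathfrak{N}\in\Gamma$ containing all of them as subsets. Each $\mathfrak{N}\cap A$ lies in $\mathcal{C}$, and the inclusions between traces are morphisms of $\mathcal{C}$ because $\mathcal{C}$ is full. The union of these traces is $A$, and in $\text{Str}\mathcal{L}$ a $\kappa$-directed union of substructures is the colimit, since $\kappa>\text{ar}(\mathcal{L})$. As $\mathcal{C}$ is closed under $\kappa$-directed colimits, $A$ is a $\kappa$-directed colimit in $\mathcal{C}$ of objects of size $<\kappa$.

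It remains to show that objects of size $<\kappa$ are $\kappa'$-presentable in $\mathcal{C}$ for suitable $\kappa'$. This is the standard fact that small structures are presentable in $\text{Str}\mathcal{L}$, and the property transfers to $\mathcal{C}$ because $\mathcal{C}$ is full and closed under $\kappa$-directed colimits; this is the point where one may need to pass from $\kappa$ to a somewhat larger $\kappa'$. Finally, every $\kappa'$-presentable object of $\mathcal{C}$ is a retract of some small object in such a colimit, so there are only set-many of them up to isomorphism, and $\mathcal{C}$ is accessible.
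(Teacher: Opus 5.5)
\textbf{The direction (2)$\Rightarrow$(1) has a genuine gap at the presentability step.} Your trace decomposition is fine and is the paper's. The traces $\mathfrak{N}\cap A$, for $\mathfrak{N}\in\Gamma$ with $A\in\mathfrak{N}$, form a $\kappa$-directed family of $<\kappa$-sized $\mathcal{C}$-subobjects whose union, and hence whose $\text{Str}\mathcal{L}$- and $\mathcal{C}$-colimit, is $A$. The problem is that you then hedge by passing to a larger cardinal, and this breaks the argument in two places. First, enlarging $\kappa$ is not harmless. The $<\kappa$-Uniformly Stationary LS property is tied to the given $\kappa$: a class of $<\kappa$-sized sets is never stationary in $[V]^{<\kappa'}$ for $\kappa'>\kappa$, and no upward transfer is known. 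So after ``enlarging $\kappa$'' you no longer have the $\Gamma$ you go on to use. Second, if $\kappa'>\kappa$, then $\kappa'$-presentability of an object only lets its identity factor through a stage of a $\kappa'$-directed colimit, whereas your decomposition is merely $\kappa$-directed. So neither the retract/set-many argument nor the requirement that every object be a $\kappa'$-directed colimit of $\kappa'$-presentables is established. The missing observation is that no enlargement is needed, because the hypothesis already forces the right cardinal arithmetic:
\begin{itemize}
\item $\mathcal{L}\subset N$ gives $|\mathcal{L}|<\kappa$;
\item a $<\text{ar}(\mathcal{L})$-closed $N$ of size $<\kappa$ forces $\text{ar}(\mathcal{L})<\kappa$, since otherwise $N\in[N]^{<\text{ar}(\mathcal{L})}\subset N$;
\item taking $N\in\Gamma$ with $\mu\subset N$ gives $[\mu]^{<\text{ar}(\mathcal{L})}\subset N$, whence $\mu^{<\text{ar}(\mathcal{L})}<\kappa$ for all $\mu<\kappa$.
\end{itemize}
Under these bounds a $<\kappa$-sized structure has fewer than $\kappa$ function values and relation instances. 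Hence any map from it into a $\kappa$-directed colimit factors, essentially uniquely, through one stage; here the colimit is computed on underlying sets in $\text{Str}\mathcal{L}$, and hence also in $\mathcal{C}$ by closure and fullness. So the $<\kappa$-sized objects of $\mathcal{C}$ are $\kappa$-presentable, and your retract argument, run at $\kappa$ itself, shows they are exactly the $\kappa$-presentables. This is how the paper concludes.

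\textbf{In (1)$\Rightarrow$(2) your primary route is unsound, though your fallback works.} Reflecting the atomic conjuncts of a pp-formula into $\mathfrak{N}$ one at a time does not produce a simultaneous witness, since conjuncts share variables. Moreover, the target claim is false. Take $\mathcal{L}=\{<\}$ and $A=(\kappa,\in)\in\mathfrak{N}$; then the trace is the ordinal $\delta=\mathfrak{N}\cap\kappa<\kappa$. The formula $\bigwedge_{\xi<\delta}(\xi<x)$ uses fewer than $\kappa$ parameters from the trace and is satisfied by $\delta$ in $A$, but it has no solution in $\delta$. So traces of $<\kappa$-sized $\mathfrak{N}$ are not $\kappa$-pure in general. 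You should commit to the fallback:
\begin{itemize}
\item keep the cardinal $\kappa_0$ from \cite[Corollary 2.36]{MR1294136}, raised above $\text{ar}(\mathcal{L})$ if necessary;
\item pick a regular $\kappa$ with $\mu^{<\kappa_0}<\kappa$ for all $\mu<\kappa$;
\item use Fact \ref{fact_LessKappaClosedStat} to get stationarily many $<\kappa_0$-closed $\mathcal{L}$-appropriate $\mathfrak{N}\in[V]^{<\kappa}$.
\end{itemize}
Their traces are $L_{\kappa_0,\kappa_0}$-elementary, hence $\kappa_0$-pure, hence in $\mathcal{C}$, and fullness handles the arrows. This differs from the paper only in using closure under pure subobjects instead of transferring an $L_\lambda$ basic theory $T$ from \cite[Theorem 5.35]{MR1294136} by elementarity. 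Both versions really need $\mathfrak{N}$ to be closed under sequences of length below the governing cardinal: $\kappa_0$ for you, $\lambda$ for the paper's $T$, which the paper's footnoted choice of $\kappa$ permits. The paper's appeal to ``$L_{\kappa,\kappa}$-elementary'' traces should be read this way, since, as your example shows, that is unavailable for $\mathfrak{N}$ of size $<\kappa$.
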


\begin{proof}
\ref{item_K_accessible} $\implies$ \ref{item_equiv_full_sub}:  By \cite[Theorem 5.35]{MR1294136}, there is a regular $\lambda$ and an $L_\lambda$ ``basic" theory $T$ (say with signature $\mathcal{L}$) such that $\mathcal{K}$ is equivalent to the full subcategory $\text{Mod}(T)$ of $\text{Str} \mathcal{L}$, and $\text{Mod}(T)$ is closed under $\lambda$-directed colimits in $\text{Str} \mathcal{L}$.  Let $\kappa$ be any regular cardinal such that $\kappa \ge \lambda$ and $\mu^{<\text{ar}(\mathcal{L})} < \kappa$ for all $\mu < \kappa$,\footnote{For example, $\kappa$ could be the successor of $2^{\lambda + |\mathcal{L}| + \text{ar}(\mathcal{L})}$.} and note that $\text{Mod}(T)$ is also closed under $\kappa$-directed colimits in $\text{Str} \mathcal{L}$.  By Fact \ref{fact_LessKappaClosedStat} the class of $<\text{ar}(\mathcal{L})$-closed structures is stationary in $[V]^{<\kappa}$, and almost all of them contain (as a subset) the $<\kappa$-sized set $\mathcal{L} \cup \{ \mathcal{L} \}$.  So the $\mathcal{L}$-appropriate structures are stationary in $[V]^{<\kappa}$, and it suffices to show that any such $\mathfrak{N}$ is also $\text{Mod}(T)$-appropriate.   Let $f:A \to B$ be a $\text{Str} \mathcal{L}$ morphism between models of $T$, with $f \in \mathfrak{N}$ (recall this always means $f$, together with its domain and codomain, are elements of $\mathfrak{N}$).  By the remarks above, $f \restriction \mathfrak{N}$ is a $\text{Str} \mathcal{L}$-homomorphism, and the vertical trace inclusions in $\mathcal{S}\left( \text{Str} \mathcal{L}, f,\mathfrak{N} \right)$ are $L_{\kappa,\kappa}$-elementarity; hence $\mathfrak{N} \cap A$ and $\mathfrak{N}  \cap B$ are also models of $T$.  So the square $\mathcal{S}(\text{Str} \mathcal{L}, f, \mathfrak{N})$ is in $\text{Mod}(T)$.

\ref{item_equiv_full_sub} $\implies$ 
\ref{item_K_accessible}:  suppose $\mathcal{C}$ is a full subcategory of $\text{Str} \mathcal{L}$ with the $<\kappa$-Uniformly Stationary LS Property, and closed under $\kappa$-directed colimits in $\text{Str} \mathcal{L}$.  We prove that $\mathcal{C}$ is $\kappa$-accessible.  It has $\kappa$-directed colimits by assumption, since $\text{Str} \mathcal{L}$ has (all) colimits and by assumption $\mathcal{C}$ is closed under $\kappa$-directed colimits in $\text{Str} \mathcal{L}$.  

It remains to show that $\mathcal{C}$ has only a set of $\kappa$-presentable objects, and that every object in $\mathcal{C}$ is a $\kappa$-directed colimit of them.  We will show that
\begin{enumerate}
	\item\label{item_EveryObjKappaDirCol} every object in $\mathcal{C}$ is a $\kappa$-directed colimit (in $\mathcal{C}$) of $<\kappa$-sized members of $\mathcal{C}$; and
	\item\label{item_KappaPresentChar} the $<\kappa$-sized members of $\mathcal{C}$ are exactly the $\kappa$-presentable members of $C$ (and hence just a set).  
\end{enumerate}

Let $\Gamma \subset [V]^{<\kappa}$ be a stationary class witnessing the $<\kappa$-Stationary LS Property of $\mathcal{C}$.  Fix any object $C$ in $\mathcal{C}$.  Then almost every $\mathfrak{N} \in \Gamma$ has $C \in \mathfrak{N}$; let $\Gamma_{C}$ denote this stationary subclass.  Consider the set
\[
\mathbb{D}:=\big\{ D \ : \ D = \mathfrak{N} \cap C \text{ for some }  \mathfrak{N} \in \Gamma_{C} \big\}
\]
and observe that, by $\mathcal{C}$-appropriateness of members of $\Gamma_C$ and fullness of $\mathcal{C}$ in $\text{Str} \mathcal{L}$, every member of $\mathbb{D}$ is a $\mathcal{C}$-subobject of $C$.  And every member of $\mathbb{D}$ is of size $<\kappa$.  $C$ is the colimit of $(\mathbb{D}, \subset)$ in $\text{Str} \mathcal{L}$ (hence $\mathcal{C}$) because for every $c \in C$ almost every $\mathfrak{N} \in \Gamma_C$ has $c \in \mathfrak{N}$.  We need to verify that $(\mathbb{D},\subset)$ is $\kappa$-directed.  Given any collection $\{ D_i \ : \ i < \mu \}$ of members of $\mathbb{D}$ with $\mu < \kappa$, by stationarity of $\Gamma$ there is an $\mathfrak{N} \in \Gamma$ containing all the $D_i$'s as elements and subsets, so $\mathfrak{N} \cap C \in \mathbb{D}$ is above all of them.  This completes the proof of \eqref{item_EveryObjKappaDirCol}.

To see \eqref{item_KappaPresentChar}:  if $C$ is a $\kappa$-presentable object in $\mathcal{C}$ then by \eqref{item_EveryObjKappaDirCol} it is a $\kappa$-directed colimit $C = \text{colim}_{i \in I} C_i$ of $<\kappa$-sized subobjects.  By $\kappa$-presentability, $\text{id}_C: C \to C = \text{colim}_{i \in I} C_i$ factors uniquely through one of the $C_i$'s, implying $|C| \le |C_i| < \kappa$.  For the other direction, suppose $C$ is a $<\kappa$-sized member of $\mathcal{C}$ and that $\pi: C \to \text{colim}_{i \in I} D_i$ is a $\mathcal{C}$ morphism into the colimit of a $\kappa$-directed system $(D_i)_i$ in $\mathcal{C}$.  By assumption this colimit is also the $\text{Str} \mathcal{L}$-colimit, and it easily follows that $\pi$ factors uniquely in $\text{Str} \mathcal{L}$ through one of the $D_i$'s.  Fullness of $\mathcal{C}$ in $\text{Str} \mathcal{C}$ implies this factorization is in $\mathcal{C}$.  Hence, $C$ is $\kappa$-presentable in $\mathcal{C}$. 
\end{proof}

Suppose $\mathcal{C}$ is a subcategory of $\text{Str} \mathcal{L}$, and that $\mathcal{C}$ has pushouts; we do \textbf{not} assume these pushouts are the same as those computed in $\text{Str} \mathcal{L}$.  Assume $f \in \mathfrak{N} \cap \text{Mor} \mathcal{C}$ where $\mathfrak{N}$ is $\mathcal{C}$-appropriate; hence the square $\mathcal{S}(\text{Str}\mathcal{L}, f,\mathfrak{N})$ is in $\mathcal{C}$, and we can take the $\mathcal{C}$-pushout of the lower left span to obtain the following commutative diagram  in $\mathcal{C}$, where the lower left part is a $\mathcal{C}$-pushout square, and $f/^{\mathcal{C}}\mathfrak{N}$ is the unique $\mathcal{C}$-morphism from the pushout into $B$ making the diagram commute:

\[\begin{tikzcd}[ampersand replacement=\&]
	\& A \&\&\&\& B \\
	\\
	{\mathcal{D}(\mathcal{C},f,\mathfrak{N}):} \&\&\& {P(\mathcal{C},f,\mathfrak{N})} \\
	\\
	\& {\mathfrak{N} \cap A} \&\&\&\& {\mathfrak{N} \cap B}
	\arrow["{f \in \mathfrak{N} \cap \text{Mor} \mathcal{C}}", from=1-2, to=1-6]
	\arrow["{\widetilde{f \restriction \mathfrak{N}}^{\ \mathcal{C}}}", curve={height=18pt}, from=1-2, to=3-4]
	\arrow["{f/^{\mathcal{C}} \mathfrak{N}}", from=3-4, to=1-6]
	\arrow["\ulcorner"{anchor=center, pos=0.125, rotate=-90}, draw=none, from=3-4, to=5-2]
	\arrow["\subseteq", from=5-2, to=1-2]
	\arrow["{f \restriction \mathfrak{N}}"', from=5-2, to=5-6]
	\arrow["\subseteq"', from=5-6, to=1-6]
	\arrow[curve={height=-18pt}, from=5-6, to=3-4]
\end{tikzcd}\]

\begin{fact}[\cite{Cox_MaxDecon} for $R$-Mod, \cite{Cox_FCC_MonoidActs} for $S$-Act]\label{fact_WhatD_looks_like}
Suppose $\mathcal{C}$ is $R$-Mod (resp. $S$-Act) and $\mathcal{L}$ is the language of $R$-modules (resp. $S$-Acts).  Then every $\mathcal{L}$-appropriate structure is $\mathcal{C}$-appropriate.  Suppose $f \in \text{Mor} \mathcal{C}$ is an \textbf{inclusion}, and $f \in \mathfrak{N}$ where $\mathfrak{N}$ is $\mathcal{L}$-appropriate.  Then regarding the diagram $\mathcal{D}(\mathcal{C},f,\mathfrak{N})$:
\begin{enumerate}
	\item The pushout $P(\mathcal{C},f,\mathfrak{N})$ can be taken to be $A + (\mathfrak{N} \cap B)$ (resp. $A \cup (\mathfrak{N} \cap B)$) and all arrows in the diagram can be taken to be inclusions. 
	\item $B/A$ is an element of $\mathfrak{N}$, and the cokernel (resp. Rees quotient) of $f \restriction \mathfrak{N}$, i.e. $\frac{\mathfrak{N} \cap B}{\mathfrak{N} \cap A}$, is isomorphic to $\mathfrak{N} \cap \frac{B}{A}$.
	\item The cokernel (resp. Rees quotient) of $f/^{\mathcal{C}} \mathfrak{N}$, i.e. $B/P$, is isomorphic to $\frac{B/A}{\mathfrak{N} \cap (B/A)}$. 
\end{enumerate}
\end{fact}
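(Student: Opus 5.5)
The plan is to verify each clause directly from the elementarity of $\mathfrak{N}$, using the explicit algebraic descriptions of pushouts and quotients in $R$-Mod and $S$-Act. Throughout, $\mathfrak{N}$ is $\mathcal{L}$-appropriate and $f\colon A \hookrightarrow B$ is an inclusion that is an element of $\mathfrak{N}$.

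\emph{Step 1: appropriateness.} $\mathcal{L}$ is finitary, so $\mathfrak{N}\cap A$ and $\mathfrak{N}\cap B$ are $\mathcal{L}$-substructures, by the remarks preceding Definition \ref{def_TraceInclusion}. Being an $R$-module (resp. $S$-act) is a universal, equational condition, so it passes to substructures. The vertical maps and $f\restriction\mathfrak{N}$ are homomorphisms. Since both categories are full in $\text{Str}\mathcal{L}$, the square $\mathcal{S}(\text{Str}\mathcal{L},f,\mathfrak{N})$ lies in $\mathcal{C}$, which is exactly $\mathcal{C}$-appropriateness.

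\emph{Step 2: clause (1).} Here $\mathfrak{N}\cap A = A\cap(\mathfrak{N}\cap B)$, because $A\subseteq B$. For modules, the pushout of two submodules $A$ and $\mathfrak{N}\cap B$ of $B$ along their intersection is their sum $A+(\mathfrak{N}\cap B)\subseteq B$. This is the standard fact that $(X\oplus Y)/\{(z,-z) : z \in X\cap Y\}\cong X+Y$ whenever $X,Y\le B$.

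For acts, I would use the corresponding fact that the pushout of two subacts of $B$ along their intersection is their union. This holds because pushouts of monomorphisms in $S$-Act are computed on underlying sets as the amalgamated union, with the action inherited. The maps into $B$ are the inclusions, so $f/^{\mathcal{C}}\mathfrak{N}$ is an inclusion as well.

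\emph{Step 3: clause (2).} The quotient $B/A$ is definable from $B$ and $A$ by a set-theoretic construction that is absolute enough. One can take the set of cosets, or for Rees quotients $(B\setminus A)\cup\{A\}$. Its existence is $\Sigma_1$-witnessed, and it is uniquely determined, so $B/A\in\mathfrak{N}$ by $\Sigma_1$-elementarity, as is the quotient map $q\colon B\to B/A$.

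Let $q'=q\restriction(\mathfrak{N}\cap B)$. I claim $q'$ maps into $\mathfrak{N}\cap(B/A)$ and is onto it. The first point is immediate from $b,q\in\mathfrak{N}$. For surjectivity, take $x\in\mathfrak{N}\cap B/A$; elementarity yields some $b\in\mathfrak{N}$ with $q(b)=x$. The kernel of $q'$ is $\mathfrak{N}\cap A$; in the act case, the collapsed set is $\mathfrak{N}\cap A$. This gives $\frac{\mathfrak{N}\cap B}{\mathfrak{N}\cap A}\cong \mathfrak{N}\cap\frac BA$.

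\emph{Step 4: clause (3).} By the third isomorphism theorem (resp. its Rees analogue), $B/P$ is isomorphic to $\frac{B/A}{P/A}$ with $P=A+(\mathfrak{N}\cap B)$ (resp. $A\cup(\mathfrak{N}\cap B)$). The image $P/A$ equals $q[\mathfrak{N}\cap B]$, which Step 3 identified with $\mathfrak{N}\cap(B/A)$. In the act case I need to check that the Rees congruence by $P$ on $B$ corresponds to the Rees congruence by $P/A$ on $B/A$, which is a routine check.

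The main obstacle is Step 3's surjectivity together with the careful $\Sigma_1$ definability of $B/A$ and $q$. Everything else is routine algebra.
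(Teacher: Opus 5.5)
Your proposal is correct. The paper gives no proof of this Fact and defers to the cited works, and your direct verification is the standard argument behind those citations:
\begin{itemize}
\item the module and act axioms are equational, so they pass to the trace substructures;
\item the sum (resp.\ union) is the pushout of two subobjects over their intersection, using $\mathfrak{N}\cap A = A\cap(\mathfrak{N}\cap B)$;
\item $\Sigma_1$-definability puts $B/A$ and the quotient map $q$ into $\mathfrak{N}$;
\item elementarity makes $q\restriction\mathfrak{N}$ surjective onto $\mathfrak{N}\cap(B/A)$;
\item the third isomorphism theorem (or its Rees analogue) then gives clause (3).
\end{itemize}
One small detail should be added in the act case. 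The equality $q[P]=q[\mathfrak{N}\cap B]$ needs the collapsed point of $B/A$ to lie in $q[\mathfrak{N}\cap B]$. This holds because $A\neq\emptyset$ and $A\in\mathfrak{N}$ give $\mathfrak{N}\cap A\neq\emptyset$. The case $A=\emptyset$ is trivial under either usual convention for $B/\emptyset$.
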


\section{Stability and the Uniformly Stationary LS Property}

We define Galois types for subcategories $\mathcal{K}$ of $\text{Str} \mathcal{L}$ as in \cite[Section 2]{MR3914179}, except we don't assume the morphisms are monomorphisms.  Consider the class of all ordered pairs $(\overline{b},f)$ where $\overline{b}$ is a wellordered sequence in the codomain of $f$.  Define $(\overline{b},f) \sim_{\text{atomic}} (\overline{c},g)$ if $\text{dom}(f) = \text{dom}(g)$, $\text{lh}(b) = \text{lh}(c)=:\alpha$ and there is a commutative square
\[\begin{tikzcd}[ampersand replacement=\&,cramped]
	B \& D \\
	A \& C
	\arrow["{g'}", dashed, from=1-1, to=1-2]
	\arrow["f", from=2-1, to=1-1]
	\arrow["g"', from=2-1, to=2-2]
	\arrow["{f'}"', dashed, from=2-2, to=1-2]
\end{tikzcd}\]
in $\mathcal{K}$ such that $f'(c_i) = g'(b_i)$ for all $i < \alpha$.  Let $\sim$ be the equivalence relation generated by $\sim_{\text{atomic}}$.  The \textbf{Galois type of $\overline{b}$ w.r.t $f$}, which we'll denote by $\text{gtp}\left( \overline{b} ; f  \right)$, is the $\sim$ equivalence class of $(\overline{b},f)$, which is typically a proper class.  If $f$ is an inclusion, say from $A$ to $B$, we may write the more typical $\text{gtp}\left( \overline{b}/A, B \right)$ instead of $\text{gtp} \left( \overline{b};f \right)$.  The collection of all Galois types with a fixed domain $A$ is denoted $\textbf{gS}^{<\infty}(A)$.  For a fixed ordinal $\alpha$, $\sim_{\alpha,\text{atomic}}$, $\sim_\alpha$, and $\text{gS}^\alpha(A)$ denote the obvious restrictions to sequences of length $\alpha$ (and similarly $\sim_{<\alpha,\text{atomic}}$, $\sim_{<\alpha}$, and $\text{gS}^{<\alpha}(A)$ for restrictions to sequences of length $<\alpha$).  We will say $\mathcal{K}$ is \textbf{stable in the cardinal $\boldsymbol{\lambda}$} if for every $\mathcal{K}$-object $A$ with $|A|=\lambda$, $\text{gS}^1(A)$ has cardinality at most $\lambda$.  $\mathcal{K}$ is \textbf{stable} if it is $\lambda$-stable for some $\lambda \ge \text{ar}(\mathcal{L})$.

Next is a variant of a standard lemma in the AEC literature.

\begin{lemma}\label{lem_LS_implies_BoundedReps}
Suppose $\kappa \ge \text{ar}(\mathcal{L})$ is a cardinal and $\mathcal{K}$ is a subcategory of $\text{Str} \mathcal{L}$ with the $\kappa$-Uniformly Stationary LS Property.  If $A$ is a $\mathcal{K}$-object of size at most $\kappa$, then every member of $\text{gS}_{\mathcal{K}}^\kappa(A)$ has a representative whose codomain is of size at most $\kappa$.  
\end{lemma}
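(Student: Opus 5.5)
Let $p \in \text{gS}^\kappa_{\mathcal{K}}(A)$ be represented by $(\overline{b}, f)$, where $f: A \to B$ is a $\mathcal{K}$-morphism and $\overline{b} = \langle b_i : i < \kappa \rangle$ is a sequence in $B$. The plan is to shrink $B$ by intersecting with a suitable elementary submodel, and then to check that the resulting pair is $\sim_{\text{atomic}}$-equivalent to the original one.

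First I choose the submodel. By the $\kappa$-Uniformly Stationary LS Property, fix a stationary class $\Gamma \subseteq [V]^{<\kappa^+}$ on which $\mathcal{K}$ has the LS Property, so every $\mathfrak{N} \in \Gamma$ is $\mathcal{K}$-appropriate. The set $P := \{f, A, B, \overline{b}\} \cup A \cup \{b_i : i < \kappa\}$ has size at most $\kappa$, since $|A| \le \kappa$. Stationarity of $\Gamma$ in $[V]^{<\kappa^+}$ therefore gives some $\mathfrak{N} \in \Gamma$ with $P \subseteq \mathfrak{N}$. Then $|\mathfrak{N}| \le \kappa$, $f \in \mathfrak{N} \cap \text{Mor}\,\mathcal{K}$, $A \subseteq \mathfrak{N}$, and each $b_i \in \mathfrak{N} \cap B$.

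Next I use the square $\mathcal{S}(\text{Str}\mathcal{L}, f, \mathfrak{N})$, which lies in $\mathcal{K}$ by $\mathcal{K}$-appropriateness. Since $A \subseteq \mathfrak{N}$, we have $\mathfrak{N} \cap A = A$, and the left vertical arrow of the square is $\text{id}_A$. This identity is a $\mathcal{K}$-morphism because the square lies in $\mathcal{K}$, and the same holds for the right vertical inclusion $\iota: \mathfrak{N} \cap B \hookrightarrow B$ and the bottom arrow $f \restriction \mathfrak{N}: A \to \mathfrak{N} \cap B$. Commutativity of the square says exactly that $\iota \circ (f\restriction\mathfrak{N}) = f$. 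The candidate representative is the pair $(\overline{b}, f \restriction \mathfrak{N})$: the sequence $\overline{b}$ lies in $\mathfrak{N} \cap B$, and this codomain has size at most $\kappa$.

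Finally I verify $(\overline{b}, f\restriction\mathfrak{N}) \sim_{\text{atomic}} (\overline{b}, f)$. Both pairs have domain $A$ and sequences of length $\kappa$. For the required square, take the outer legs to be $f\restriction\mathfrak{N}: A \to \mathfrak{N}\cap B$ and $f: A \to B$, with $D = B$. Put $\iota: \mathfrak{N} \cap B \to B$ on one side and $\text{id}_B$ on the other. Both are $\mathcal{K}$-morphisms: $\text{id}_B$ is one because $\mathcal{K}$ is a subcategory containing the object $B$. The square commutes because $\iota \circ (f\restriction\mathfrak{N}) = f = \text{id}_B \circ f$. The sequences match, since $\iota(b_i) = b_i = \text{id}_B(b_i)$ for each $i < \kappa$.

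The main obstacle is only bookkeeping. One must make sure that $\mathfrak{N} \cap A = A$, so that the restricted morphism really has domain $A$ as the definition of $\sim_{\text{atomic}}$ requires, and that the inclusion $\iota$ belongs to $\mathcal{K}$ and not merely to $\text{Str}\mathcal{L}$; the latter is exactly what the LS Property on $\Gamma$ provides. The hypothesis $\kappa \ge \text{ar}(\mathcal{L})$ is needed only so that $\mathcal{L}$-appropriate structures of size at most $\kappa$ exist, and this is already built into the assumption that $\Gamma$ is stationary.
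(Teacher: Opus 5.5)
Your proof is correct and follows essentially the same route as the paper. You choose a $\mathcal{K}$-appropriate $\mathfrak{N}$ of size $\le \kappa$ that contains $f$, $A$ (as both element and subset) and the $b_i$, note that $\mathfrak{N}\cap A = A$, and witness $(\overline{b}, f\restriction\mathfrak{N}) \sim_{\text{atomic}} (\overline{b}, f)$ using the trace inclusion $\mathfrak{N}\cap B \subseteq B$ together with $\text{id}_B$; you simply write out the witnessing square more explicitly than the paper does.
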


\begin{proof}
Fix a $\mathcal{K}$-object $A$ of size $\kappa$, a $\mathcal{K}$-morphism $f: A \to B$, and a sequence $\overline{b} = \langle b_i \ : \ i < \kappa \rangle$ in $B$.  The $\kappa$-Uniformly Stationary LS assumption yields an $\mathcal{K}$-appropriate $\mathfrak{N} \in [V]^\kappa$ such that 
\[
\{ f, A,B \} \cup A  \cup \{ b_i \ : \ i < \kappa  \} \subset \mathfrak{N}.
\]

Then $A = \mathfrak{N} \cap A$ and since $\mathfrak{N}$ is $\mathcal{K}$-appropriate, the entire diagram
\[\begin{tikzcd}[ampersand replacement=\&]
	A \&\& B \\
	\begin{array}{c} A=\\\mathfrak{N} \cap A \end{array} \&\& {\mathfrak{N} \cap B}
	\arrow["f", from=1-1, to=1-3]
	\arrow[shift right, no head, from=2-1, to=1-1]
	\arrow[no head, from=2-1, to=1-1]
	\arrow["{f \restriction \mathfrak{N}}"', from=2-1, to=2-3]
	\arrow["\subseteq"', from=2-3, to=1-3]
\end{tikzcd}\]
is in $\mathcal{K}$.  Also, $\overline{b}$ is contained in $\mathfrak{N} \cap B = \text{cod} \left(f \restriction \mathfrak{N}\right)$, and the latter has cardinality at most $|\mathfrak{N}|=\kappa$.  So $\left(\overline{b}, f \right)$ is atomically related to $\left( \overline{b}, f \restriction \mathfrak{N} \right)$.
\end{proof}

Recall $\text{ar}(\mathcal{L})$ was defined to be the least regular, infinite cardinal strictly bounding all the arities of the function and relation symbols.  Let $\boldsymbol{|\mathcal{L}|}$ denote the number of $\mathcal{L}$-symbols.  For example, if $R$ is an infinite ring and $\mathcal{L}_R$ is the language of $R$-modules, $\text{ar}\left( \mathcal{L}_R \right) = \omega$ and $\left\vert \mathcal{L} \right\vert=|R|$.  Theorem \ref{thm_MainThm_Stability} is the main theorem of the paper.

\begin{theorem}\label{thm_MainThm_Stability}
Suppose $\mathcal{C}$ is a subcategory of $\text{Str} \mathcal{L}$, and $\mathcal{K}$ is a subcategory of $\mathcal{C}$.  Suppose $\kappa$ is a regular uncountable cardinal such that $\kappa > \left\vert \mathcal{L} \right\vert$ and $\mu^{<\text{ar}(\mathcal{L})} < \kappa$ for all $\mu < \kappa$.\footnote{If $\text{ar}(\mathcal{L}) = \omega$ (i.e. $\mathcal{L}$ is finitary), as in all our main applications, the cardinal arithmetic requirement is automatic.}  Assume also that $\lambda$ is a (not necessarily regular) cardinal such that:
\begin{enumerate}[label=(\roman*)]
		
	\item\label{item_cardinal_arith} $\lambda^{<\kappa} = \lambda$

	\item\label{item_K_lambda_stat_LS} $\mathcal{K}$ has the $\lambda$-Uniformly Stationary LS Property

	\item\label{item_factoring}   \textbf{$\boldsymbol{\mathcal{C}}$ has the $\boldsymbol{<\kappa}$-Uniformly Stationary LS Property with factoring through $\boldsymbol{\mathcal{K}}$}, by which we mean:
	\begin{itemize}
		\item $\mathcal{C}$ has pushouts; and
		\item $\mathcal{C}$ has the LS property on a stationary class $\Gamma \subset [V]^{<\kappa}$ of $\mathcal{C}$-appropriate $\mathfrak{N}$ with the following additional property:  whenever $f \in \mathfrak{N} \cap \text{Mor} \boldsymbol{\mathcal{K}}$, the maps $\widetilde{f \restriction \mathfrak{N}}^{\ \mathcal{C}}$ and $f/^{\mathcal{C}} \mathfrak{N}$ from the $\mathcal{C}$-diagram $\mathcal{D}\left( \mathcal{C},f,\mathfrak{N} \right)$ also lie in $\mathcal{K}$:

\[\begin{tikzcd}[ampersand replacement=\&]
	\& A \&\&\&\& B \\
	\\
	{\mathcal{D}(\mathcal{C},f,\mathfrak{N}):} \&\&\& {P(\mathcal{C},f,\mathfrak{N})} \\
	\\
	\& {\mathfrak{N} \cap A} \&\&\&\& {\mathfrak{N} \cap B}
	\arrow["{f \in \mathfrak{N} \cap \text{Mor} \color{red} \boldsymbol{\mathcal{K}}}"{inner sep=.8ex}, "\bullet"{marking}, from=1-2, to=1-6]
	\arrow["{\widetilde{f \restriction \mathfrak{N}}^{\ \mathcal{C}}}"{inner sep=.8ex}, "\bullet"{marking}, curve={height=18pt}, from=1-2, to=3-4]
	\arrow["{f/^{\mathcal{C}} \mathfrak{N}}"{inner sep=.8ex}, "\bullet"{marking}, from=3-4, to=1-6]
	\arrow["\ulcorner"{anchor=center, pos=0.125, rotate=-90}, draw=none, from=3-4, to=5-2]
	\arrow["\subseteq", from=5-2, to=1-2]
	\arrow["{f \restriction \mathfrak{N}}"', from=5-2, to=5-6]
	\arrow["\subseteq"', from=5-6, to=1-6]
	\arrow[curve={height=-18pt}, from=5-6, to=3-4]
\end{tikzcd}\]	

	\end{itemize}

	 \color{black}
	
	\item\label{item_ClosedUnderIso} The arrow category $\mathcal{K}^\to$ is closed under isomorphisms in the arrow category $\mathcal{C}^\to$.  I.e., if 
\[\begin{tikzcd}[ampersand replacement=\&]
	{} \& {} \\
	{} \& {}
	\arrow[from=1-1, to=1-2]
	\arrow["\simeq"', from=1-1, to=2-1]
	\arrow["\simeq", from=1-2, to=2-2]
	\arrow[from=2-1, to=2-2]
\end{tikzcd}\]
is a commutative square in $\mathcal{C}$ and the vertical arrows are $\mathcal{C}$-isomorphisms, then the top arrow is in $\mathcal{K}$ if and only if the bottom arrow is in $\mathcal{K}$. 

\end{enumerate}

Then $\mathcal{K}$ is $\lambda$-stable.  
\end{theorem}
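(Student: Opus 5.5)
Fix a $\mathcal{K}$-object $A$ with $|A| = \lambda$. The plan is to bound $|\text{gS}^1(A)|$ by $\lambda$ by sending each type to a small invariant computed inside a single suitably closed $\mathfrak{M}$. We show that equal invariants force equal types, and then count invariants.

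\textbf{Step 1: choose $\mathfrak{M}$.} Using hypothesis \ref{item_cardinal_arith} and Fact \ref{fact_LessKappaClosedStat}, we pick $\mathfrak{M} \prec_1 (V,\in)$ with $|\mathfrak{M}| = \lambda$, $A \cup \{A, \mathcal{L}, \Gamma\text{-relevant parameters}\} \subset \mathfrak{M}$, and $[\mathfrak{M}]^{<\kappa} \subset \mathfrak{M}$. Given a type $p = \text{gtp}(b; f)$ with $f : A \to B$ in $\mathcal{K}$, Lemma \ref{lem_LS_implies_BoundedReps} and hypothesis \ref{item_K_lambda_stat_LS} let us assume $|B| \le \lambda$, and then replace $(b,f)$ by an isomorphic copy whose universe is a subset of $\lambda \times$ (something in $\mathfrak{M}$). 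Hypothesis \ref{item_ClosedUnderIso} keeps $f$ in $\mathcal{K}$ under this transport.

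\textbf{Step 2: reflect into a small $\mathfrak{N} \in \Gamma$.} Using the stationarity of $\Gamma$ (hypothesis \ref{item_factoring}), choose $\mathfrak{N} \in \Gamma$ with $\{f, A, B, b\} \subset \mathfrak{N}$. Form $\mathcal{D}(\mathcal{C}, f, \mathfrak{N})$, which yields
\[
b \in \mathfrak{N} \cap B \to P(\mathcal{C},f,\mathfrak{N}) \xrightarrow{f/^{\mathcal{C}}\mathfrak{N}} B .
\]
Both $\widetilde{f\restriction\mathfrak{N}}^{\ \mathcal{C}}$ and $f/^{\mathcal{C}}\mathfrak{N}$ lie in $\mathcal{K}$. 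Hence $(b, f) \sim_{\text{atomic}} (b', \widetilde{f\restriction\mathfrak{N}})$, where $b'$ is the image of $b$ in the pushout, via the square with top $f/^{\mathcal{C}}\mathfrak{N}$ and bottom the identity on $A$. The pushout is determined up to isomorphism by the span $A \supseteq \mathfrak{N}\cap A \xrightarrow{f\restriction \mathfrak{N}} \mathfrak{N}\cap B$. So the type of $b$ is determined by three data: the $<\kappa$-sized subset $\mathfrak{N}\cap A$ of $A$, the isomorphism type over $\mathfrak{N}\cap A$ of the small map $f\restriction\mathfrak{N}$, and $b \in \mathfrak{N}\cap B$. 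By hypothesis \ref{item_ClosedUnderIso} and the universal property of pushouts, isomorphic spans with matching points give $\sim_{\text{atomic}}$-related pairs.

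\textbf{Step 3: count.} The number of subsets of $A$ of size $<\kappa$ is $\lambda^{<\kappa} = \lambda$. Given such a subset $A_0$, the number of isomorphism types of $\mathcal{L}$-structures of size $<\kappa$ with a homomorphism from $A_0$ and a distinguished element is at most $\sum_{\mu<\kappa} 2^{\mu^{<\text{ar}(\mathcal{L})}\cdot|\mathcal{L}|} \le 2^{<\kappa} \le \lambda^{<\kappa} = \lambda$; here we use $\kappa > |\mathcal{L}|$ and the arity bound. The total is therefore $\lambda$, which gives $\lambda$-stability.

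The main obstacle is Step 2. One must check that the isomorphism-invariance of the pushout, together with hypothesis \ref{item_ClosedUnderIso}, really makes the small data an invariant of the \emph{Galois type}. Two subtleties arise. First, $\widetilde{f\restriction\mathfrak{N}}$ must be in $\mathcal{K}$ for the atomic relation to apply. Second, different choices of $\mathfrak{N}$ for the same type give different data, which is harmless because we only need an injection from types into the set of data up to a choice made for each type. Step 1 may turn out unnecessary: Step 2 alone suffices, with the pushout argument doing the work, while $\lambda$-LS is used only to guarantee $\mathfrak{N}$ can be taken to contain $b$ with $|B|$ irrelevant.
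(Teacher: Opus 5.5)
Your proof is correct, and its reduction step matches the paper's. You reflect $(b,f)$ into some $\mathfrak{N}\in\Gamma$ and pass to the pushout leg $\widetilde{f\restriction\mathfrak{N}}^{\ \mathcal{C}}$, which is legitimate because (iii) puts both it and $f/^{\mathcal{C}}\mathfrak{N}$ in $\mathcal{K}$. You then observe that this leg depends only on the small span $A\supseteq \mathfrak{N}\cap A\xrightarrow{f\restriction\mathfrak{N}}\mathfrak{N}\cap B$.

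Where you differ is the counting. The paper proceeds as follows:
\begin{itemize}
\item It uses (ii) and Lemma~\ref{lem_LS_implies_BoundedReps} to place $B$ inside $\lambda$.
\item It builds a $<\kappa$-closed $\mathfrak{W}\prec H_\theta$ of size $\lambda$ that is closed under $\mathcal{C}$-pushouts.
\item It proves that every $<\kappa$-sized substructure of a structure living inside $\mathfrak{W}$ is an element of $\mathfrak{W}$. This is where $\kappa>|\mathcal{L}|$ and $\mu^{<\text{ar}(\mathcal{L})}<\kappa$ are used.
\item It trades $P$ for a pushout $Q\in\mathfrak{W}$ of the \emph{literally same} span. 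The only isomorphism it ever needs is the canonical $\mathcal{C}$-isomorphism $Q\cong P$.
\end{itemize}
You instead count isomorphism classes of pointed small spans directly, with the same cardinal hypotheses entering through the count of structures on cardinals $\mu<\kappa$. This is more elementary: there is no $\mathfrak{W}$ and no reflection argument for a pushout-closed $H_\theta$. Moreover, once Step 1 is dropped, (ii) is never used at all.

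The price is that you compare \emph{different} spans. The isomorphism $\sigma:\mathfrak{N}\cap B\to\mathfrak{N}'\cap C$ over $A_0$ must be a $\mathcal{C}$-isomorphism for the universal property to give $i:P_f\to P_g$. After that, (iv), applied to the square with top $\mathrm{id}_{P_f}$, verticals $\mathrm{id}_{P_f}$ and $i$, and bottom $i$, puts $i$ in $\mathcal{K}$. Since $\mathcal{C}$ is not assumed full or replete in $\text{Str}\mathcal{L}$, counting $\mathcal{L}$-isomorphism types needs transport of structure in $\mathcal{C}$. This holds in all the applications, and the paper's ``without loss of generality the codomain is a subset of $\lambda$'' tacitly uses the same kind of assumption, but you should state it.

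Alternatively, keep Step 1 and count literal spans whose universes lie in a fixed $\lambda$-sized set; there are at most $\lambda^{<\kappa}\cdot 2^{<\kappa}=\lambda$ of them. Then no distinct spans are ever compared, and you have exactly the paper's argument with $\mathfrak{W}$ replaced by a direct count.

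There are two small slips. First, the atomic square relating $(b,f)$ and $(b',\widetilde{f\restriction\mathfrak{N}}^{\ \mathcal{C}})$ is completed by $f/^{\mathcal{C}}\mathfrak{N}$ and $\mathrm{id}_B$, not by the identity on $A$. Second, the $\mathfrak{N}$ containing $b$ and $f$ comes from the stationarity of $\Gamma$ in (iii), not from $\lambda$-LS. In your Step-1-free version (ii) simply plays no role, and you should say so rather than assign it one.
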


\begin{proof}
Fix a $\mathcal{K}$-object $A$ of size at most $\lambda$.  Assumption \ref{item_K_lambda_stat_LS} and Lemma \ref{lem_LS_implies_BoundedReps} imply every member of $\text{gS}_\mathcal{K}^{\lambda}(A)$ has a representative whose codomain is of size at most $\lambda$; without loss of generality, whose codomain is a subset of $\lambda$.  We may also assume $A \subseteq \lambda$.  So to prove the theorem, we may restrict attention to the set
\[
X:= \left\{ \left( \overline{b}, f \right) \ : \ f \in \text{Mor}\mathcal{K}, \ \text{dom}(f) = A, \text{cod}(f) \subseteq \lambda, \ \text{ and } \overline{b} \in {}^{<\kappa} \text{cod}(f)   \right\} 
\]
and show that $X/ \sim^A_\mathcal{K}$ has cardinality at most $\lambda$.

There is a closed unbounded class of cardinals $\theta$ such that $H_\theta$ is closed under $\mathcal{C}$-pushouts, in the sense that for any $\mathcal{C}$-span that is an element of $H_\theta$, there exists a $\mathcal{C}$-pushout square for that span that is an element of $H_\theta$.\footnote{This can be shown either in ZFC if $\mathcal{C}$ is a definable class (using the Reflection Theorem and definability of $\Sigma_n$ truth in $(V,\in)$ for fixed $n$) or G\"odel-Bernays class theory.}  Fix such a $\theta$ with $X \in H_\theta$ and $\text{cf}(\theta) \ge \kappa$.  By assumption \ref{item_cardinal_arith} and Fact \ref{fact_LessKappaClosedStat}, there is a 
$\mathfrak{W} \prec (H_\theta,\in, \mathcal{C} \cap H_\theta,A,X,\lambda)$ such that $|\mathfrak{W}|=\lambda \subset \mathfrak{W}$, $A$ is both an element and subset of $\mathfrak{W}$, $\mathfrak{W}$ is $\mathcal{L}$-appropriate and 
\begin{equation}\label{eq_W_LessKappa}
[\mathfrak{W}]^{<\kappa} \subset \mathfrak{W}.
\end{equation}
Note that since $\mathfrak{W} \prec (H_\theta,\in,\mathcal{C} \cap H_\theta)$ and the latter is closed under $\mathcal{C}$-pushouts, then
\begin{equation}\label{eq_W_closed_C_pushouts}
\mathfrak{W} \text{ is closed under } \mathcal{C} \text{-pushouts.}
\end{equation}
We will show that every member of $X$ is $\sim_\mathcal{K}$ equivalent to a pair that is an \emph{element} of $\mathfrak{W}$.  This will complete the proof, since $|\mathfrak{W}| = \lambda$.

First we show:
\begin{equation}\label{eq_SmallSubstructuresInW}
B \in \text{Obj} \mathcal{K} \text{ and } \text{univ}(B) \subset \mathfrak{W} \implies \text{ every } < \kappa \text{-sized substructure of } B \text{ is an } \textbf{\emph{element of} } \mathfrak{W}. 
\end{equation}
Assume $B$ satisfies the hypothesis and $C$ is a substructure of $B$ of size $<\kappa$ (i.e., $|\text{univ}(C)|<\kappa$).  Then $\text{univ}(C) \in \mathfrak{W}$ by \eqref{eq_W_LessKappa}, but we also need to show that the remaining part of the structure is an element of $\mathfrak{W}$.  Consider any $\mathcal{L}$-function symbol $\tau$, with arity $\zeta$.  The function $\tau^C$ consists of ordered pairs of the form $\left( \overline{x}, y \right)$ where $\overline{x}$ is a $\zeta$-tuple in $\text{univ}(C)$, $y \in \text{univ}(C)$, and $\zeta < \kappa$ by assumption.  Because of \eqref{eq_W_LessKappa}, $\overline{x}$ is an \emph{element} of $\mathfrak{W}$.  And since $y$ is also an element of $\mathfrak{W}$, the pair $(\overline{x},y)$ is an element of $\mathfrak{W}$.  This shows $\tau^C \subset \mathfrak{W}$ (viewing $\tau^C$ as a set of ordered pairs).  Let $\mu:=|\text{univ}(C)|$.  The assumption that $\kappa > \mu^{<\text{ar}(\mathcal{L})} \ge \mu^\zeta$ ensures $|\tau^C|<\kappa$ and hence, by \eqref{eq_W_LessKappa} and the fact that $\tau^C \subset \mathfrak{W}$, $\tau^C$ is an \emph{element} of $\mathfrak{W}$.  Similar (but easier) arguments deal with constant and relation symbols, yielding that $\tau^C \in \mathfrak{W}$ for every $\mathcal{L}$-symbol $\tau$.  The assumption $\kappa > |\mathcal{L}|$, together with \eqref{eq_W_LessKappa}, then imply that the entire structure $C$ is an element of $\mathfrak{W}$, concluding the proof of \eqref{eq_SmallSubstructuresInW}.

Fix any $(\overline{b},f) \in X$; so $f: A \to B$ is $\mathcal{K}$-morphism and $A,B \subset \lambda$.  Let $\alpha:= \text{lh}(\overline{b})<\kappa$.  By assumption \ref{item_factoring} there is some $\mathfrak{N}$ such that $|\mathfrak{N}|<\kappa$, $\mathfrak{N}$ is $\mathcal{C}$-appropriate, $b_i \in \mathfrak{N}$ for every $i < \alpha$, $f \in \mathfrak{N}$, and the arrows labeled with a $\bullet$ in the following diagram lie in $\mathcal{K}$:

\begin{equation}\label{eq_MainBigDiag}
\begin{tikzcd}[ampersand replacement=\&]
	\begin{array}{c} A \\ ( = \mathfrak{W} \cap A) \end{array} \&\&\&\& \begin{array}{c} B \\ (= \mathfrak{W} \cap B) \end{array} \\
	\\
	\&\& {P=P\left(\mathcal{C},f,\mathfrak{N} \right)} \\
	\\
	{A_0:=\mathfrak{N} \cap A } \&\&\&\& {B_0:=\mathfrak{N} \cap B }
	\arrow["{f \in \mathfrak{N} \cap \text{Mor} \color{red} \boldsymbol{\mathcal{K}}}"{inner sep=.8ex}, "\bullet"{marking}, from=1-1, to=1-5]
	\arrow["{\widetilde{ f \restriction \mathfrak{N}}^{\ \mathcal{C}}}"'{inner sep=.8ex}, "\bullet"{marking}, curve={height=24pt}, from=1-1, to=3-3]
	\arrow["{f/^{\mathcal{C}}\mathfrak{N}}"'{inner sep=.8ex}, "\bullet"{marking}, from=3-3, to=1-5]
	\arrow["\ulcorner"{anchor=center, pos=0.125, rotate=-90}, draw=none, from=3-3, to=5-1]
	\arrow["\subset"{description}, from=5-1, to=1-1]
	\arrow["{f_0:=f \restriction \mathfrak{N}}"', from=5-1, to=5-5]
	\arrow["\subset"{description}, from=5-5, to=1-5]
	\arrow["j"', curve={height=-18pt}, from=5-5, to=3-3]
\end{tikzcd}
\end{equation}

Recall $b_i \in \mathfrak{N}$ for each $i < \alpha$, so $\overline{b}$ is a sequence in $B_0$.  Let $\overline{p}$ be the image of $\overline{b}$ via $j$.  Commutativity of \eqref{eq_MainBigDiag}, and the fact that $\widetilde{f \restriction \mathfrak{N}}^{\ \mathcal{C}}$ and $f/^\mathcal{C} \mathfrak{N}$ both lie in $\mathcal{K}$, ensure that 
\[
\left(\overline{p}, \widetilde{f \restriction \mathfrak{N}}^{\ \mathcal{C}} \right) \sim_{\mathcal{K},\text{atomic}}(\overline{b}, f).
\]

It remains to show that $\left( \overline{p},\widetilde{f \restriction \mathfrak{N}}^{\ \mathcal{C}} \right)$ is $\sim_{\mathcal{K}, \text{atomic}}$ related to an element of $\mathfrak{W}$.  By \eqref{eq_SmallSubstructuresInW}, both $A_0$ and $B_0$ are elements of $\mathfrak{W}$.  Then by \eqref{eq_W_LessKappa}, the map $f_0$ (viewed as a subset of $A_0 \times B_0$) is also an element of $\mathfrak{W}$.  Recall also that $A \in \mathfrak{W}$.  So the $\mathcal{C}$-span 

\[\begin{tikzcd}[ampersand replacement=\&,cramped]
	A \& \\
	{A_0} \& {B_0}
	\arrow["\subset", from=2-1, to=1-1]
	\arrow["{f_0}"', from=2-1, to=2-2]
\end{tikzcd}\]

\noindent ---which is just the lower left corner of the diagram \eqref{eq_MainBigDiag}---is an element of $\mathfrak{W}$.  Then by \eqref{eq_W_closed_C_pushouts}, $\mathfrak{W}$ has some $\mathcal{C}$-pushout square

\[\begin{tikzcd}[ampersand replacement=\&,cramped]
	A \& Q \\
	{A_0} \& {B_0}
	\arrow["h", from=1-1, to=1-2]
	\arrow["\ulcorner"{anchor=center, pos=0.125, rotate=-90}, draw=none, from=1-2, to=2-1]
	\arrow["\subset", from=2-1, to=1-1]
	\arrow["{f_0}"', from=2-1, to=2-2]
	\arrow["\ell"', from=2-2, to=1-2]
\end{tikzcd}\]
By the universality of pushouts there is a (unique) $\mathcal{C}$-isomorphism $i$ making the following diagram commute:

\begin{equation}\label{eq_PushoutIso}
\begin{tikzcd}[ampersand replacement=\&]
	\&\& P \\
	A \& Q \\
	{A_0} \& {B_0}
	\arrow["{\widetilde{f \restriction \mathfrak{N}}^{\ \mathcal{C}}}", curve={height=-12pt}, from=2-1, to=1-3]
	\arrow["h", from=2-1, to=2-2]
	\arrow["i", from=2-2, to=1-3]
	\arrow["\ulcorner"{anchor=center, pos=0.125, rotate=-90}, draw=none, from=2-2, to=3-1]
	\arrow["\subset", from=3-1, to=2-1]
	\arrow["{f_0}"', from=3-1, to=3-2]
	\arrow["j"', curve={height=12pt}, from=3-2, to=1-3]
	\arrow["\ell"', from=3-2, to=2-2]
\end{tikzcd}
\end{equation}

Assumption \ref{item_ClosedUnderIso},  together with the facts that $i$ is a $\mathcal{C}$-isomorphism and $\widetilde{f \restriction \mathfrak{N}}^{\ \mathcal{C}} \in \mathcal{K}$, ensure $h \in \mathcal{K}$.  And since $Q$ and $P$ are both $\mathcal{K}$-objects (being codomains of $\mathcal{K}$ morphisms) and $i$ is a $\mathcal{C}$-isomorphism, assumption \ref{item_ClosedUnderIso} ensures $i$ is a $\mathcal{K}$-morphism.\footnote{Apply assumption \ref{item_ClosedUnderIso} to the commutative square 
\[\begin{tikzcd}[ampersand replacement=\&,cramped]
	Q \& Q \\
	Q \& P
	\arrow["{\text{id}_Q}", from=1-1, to=1-2]
	\arrow["{\text{id}_Q}"', from=1-1, to=2-1]
	\arrow["i", from=1-2, to=2-2]
	\arrow["i"', from=2-1, to=2-2]
\end{tikzcd}\]} 
Let $\overline{q}$ be the image of $\overline{b}$ via $\ell$ ($=$ the preimage of $\overline{p}$ via $i$).  Since both $\overline{b}$ and $\ell$ are elements of $\mathfrak{W}$, $\overline{q}$ is an element of $\mathfrak{W}$.  Since $h$ is also an element of $\mathfrak{W}$, the pair $\left( \overline{q},h \right)$ is an element of $\mathfrak{W}$.  And commutativity of \eqref{eq_PushoutIso}, and the fact that $i \in \mathcal{K}$, yield
\[
\left(\overline{q},h \right) \sim_{\mathcal{K},\text{atomic}} \left( \overline{p}, \widetilde{f \restriction \mathfrak{N}}^{\ \mathcal{C}} \right).
\]

\end{proof}

\section{Purity and elementary submodels}

\subsection{Purity and traces}\label{sec_PurityAndTraces}

Let $\mathcal{C}$ be any category, and $\mu$ an infinite regular cardinal.  A morphism $f:  A \to B$ in $\mathcal{C}$ is (categorically) \textbf{$\boldsymbol{\mu}$-pure in $\boldsymbol{\mathcal{C}}$} in the sense of \cite[Definition 2.27]{MR1294136} if for every solid commutative diagram in $\mathcal{C}$ of the form
\[\begin{tikzcd}[ampersand replacement=\&,cramped]
	A \& B \\
	S \& T
	\arrow["f", from=1-1, to=1-2]
	\arrow[from=2-1, to=1-1]
	\arrow[from=2-1, to=2-2]
	\arrow[""{name=0, anchor=center, inner sep=0}, dotted, from=2-2, to=1-1]
	\arrow[from=2-2, to=1-2]
	\arrow["\circlearrowleft"{description}, draw=none, from=2-1, to=0]
\end{tikzcd}\]
with $S$ and $T$ both $\mu$-presentable in $\mathcal{C}$, there exists a $\mathcal{C}$-morphism from $T$ to $A$ making the lower left triangle commute.  This makes sense for any $\mathcal{C}$, and agrees with the more traditional definition of $\mu$-purity if $\mathcal{C}$ is the category of modules over a ring or acts over a monoid, where $\mu$-purity is typically defined in terms of solutions to $<\mu$ many equations with parameters from $A$.  Ordinary purity is $\omega$-purity, and pure embeddings are always monomorphisms in accessible categories \cite[Proposition 2.29]{MR1294136}.  If $\mathcal{C}$ is a Grothendieck category, $\mu$-purity of a morphism $f: A \to B$ is equivalent to the assertion that for every $\mu$-presentable object $X$, the canonical map
\[
\text{Hom}(X,B) \to \text{Hom}(X,\text{coker}(f))
\]
is surjective. 
	
Until further notice, ``pure" means in the sense above.  In Section \ref{sec_Qcoh_Kaplansky} we discuss the (generally weaker) notion of \emph{geometric purity} often used in the quasicoherent sheaf literature.
\begin{lemma}\label{lem_EasyPureTraces}
Suppose $\mathcal{C}$ is either $R$-Mod or $S$-Act, $\mu$ is an infinite regular cardinal, $\mathfrak{N}$ is appropriate for the relevant language, and $\mathfrak{N}$ is $<\mu$-closed.  Then $\mathfrak{N}$ is $\left( \text{Obj} \mathcal{C}, \mu \text{-pure}\right)$-appropriate.   
\end{lemma}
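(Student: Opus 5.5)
The plan is to replace categorical $\mu$-purity by its equational description, and then check that description with a reflection argument inside $\mathfrak{N}$. In $R$-Mod, and likewise in $S$-Act, a morphism $f:A\to B$ is $\mu$-pure exactly when the following holds. Let $\Sigma(\overline{x};\overline{a})$ be a system of fewer than $\mu$ atomic equations, with $<\mu$ unknowns $\overline{x}$ and parameters $\overline{a}$ from $A$. In $R$-Mod these are linear equations; in $S$-Act they are equations of the forms $s x = t y$ or $s x = t a$. If $\Sigma(\overline{x};f(\overline{a}))$ has a solution in $B$, then $\Sigma(\overline{x};\overline{a})$ has a solution in $A$. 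This is the standard translation for $\mu$-presentable objects in categories of the form $\text{Mod}(T)$ \cite{MR1294136}, and I will cite it rather than reprove it.

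Fix $f:A\to B$ in $\text{Mor}\,\mathcal{C}$ with $f\in\mathfrak{N}$. The objects $\mathfrak{N}\cap A$ and $\mathfrak{N}\cap B$ are substructures, hence modules (resp. acts), and $\mathcal{C}$ is full in $\text{Str}\,\mathcal{L}$. So the only real task is to show that the three new arrows in $\mathcal{S}(\text{Str}\,\mathcal{L},f,\mathfrak{N})$ are $\mu$-pure, assuming $f$ is $\mu$-pure.

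First take the trace inclusion $\mathfrak{N}\cap A\subseteq A$. Suppose $\Sigma(\overline{x};\overline{a})$ has parameters in $\mathfrak{N}\cap A$ and is solvable in $A$. The system is a $<\mu$-sized object built from elements of $\mathfrak{N}$ and from ring (or monoid) elements, which lie in $\mathfrak{N}$ because $\mathcal{L}\subset\mathfrak{N}$. By $<\mu$-closure of $\mathfrak{N}$, the system $\Sigma$ itself is an element of $\mathfrak{N}$. The statement ``there is a function from the index set of $\overline{x}$ into $A$ solving $\Sigma$'' is $\Sigma_1$ in the parameters $A,\Sigma\in\mathfrak{N}$, and it is true in $V$. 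Hence $\mathfrak{N}$ contains such a solution $\overline{c}$. The index set has size $<\mu$ and belongs to $\mathfrak{N}$. Since $\mathfrak{N}$ is $<\mu$-closed and elementary, every element of a $<\mu$-sized set in $\mathfrak{N}$ lies in $\mathfrak{N}$, so each coordinate $c_i$ lies in $\mathfrak{N}\cap A$. Thus $\Sigma$ is solvable in $\mathfrak{N}\cap A$. The inclusion $\mathfrak{N}\cap B\subseteq B$ is handled identically, with $B$ in place of $A$.

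Next take $f\restriction\mathfrak{N}$. Suppose $\Sigma(\overline{x};\overline{a})$ has parameters in $\mathfrak{N}\cap A$ and $\Sigma(\overline{x};f(\overline{a}))$ is solvable in $\mathfrak{N}\cap B$. Then it is solvable in $B$. Since $f$ is $\mu$-pure, $\Sigma(\overline{x};\overline{a})$ is solvable in $A$. The reflection step of the previous paragraph then gives a solution in $\mathfrak{N}\cap A$. Commutativity of the square is automatic, so $\mathfrak{N}$ is $(\text{Obj}\,\mathcal{C},\mu\text{-pure})$-appropriate.

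The main obstacle is the first step: the clean equivalence between categorical $\mu$-purity (factorization through $\mu$-presentable $S\to T$) and solvability of $<\mu$-sized equation systems. This needs particular care in $S$-Act, where presentations use equations between translates rather than linear combinations. I would handle it by writing a $\mu$-presentable $T$ as the quotient of a free act on $<\mu$ generators by $<\mu$ relations, so that a map $T\to B$ is exactly a solution of the corresponding system. Any $\mu$-presentable $S$ mapping to $A$ contributes only $<\mu$ parameters, so the two notions match.
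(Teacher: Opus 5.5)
Your proposal is correct and follows essentially the paper's argument. Like the paper, you read $\mu$-purity equationally, get the system into $\mathfrak{N}$ by $<\mu$-closure, and combine $\mu$-purity of $f$ with elementarity of $\mathfrak{N}$ to pull a solution into $\mathfrak{N}\cap A$. The only differences are minor: for the vertical trace inclusions the paper just cites their $L_{\mu,\mu}$-elementarity from Section 3, whereas you reprove the needed reflection directly, and you treat arbitrary $f$ rather than assuming $f$ is an inclusion.
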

\begin{proof}
Fix a $\mu$-pure inclusion $f \in \mathfrak{N}$ and consider the square $\mathcal{S}(\text{Str} \mathcal{L}, f,\mathfrak{N})$ from page \pageref{eq_S_square}.  As discussed there, the $<\mu$-closure of $\mathfrak{N}$ ensures the vertical trace inclusions are $L_{\mu,\mu}$-elementary, so in particular they are $\mu$-pure.  To see that $f \restriction \mathfrak{N}: \mathfrak{N} \cap A \to \mathfrak{N} \cap B$ is $\mu$-pure, consider a $<\mu$-sized system $\mathcal{E}$ of equations with parameters from $\mathfrak{N} \cap A$ that is solvable in $\mathfrak{N} \cap B$.  The $<\mu$-closure of $\mathfrak{N}$ ensures $\mathcal{E} \in \mathfrak{N}$.  Then elementarity of $\mathfrak{N}$ and $\mu$-purity of $f: A \to B$ implies $\mathcal{E}$ is solvable in $\mathfrak{N} \cap A$.
\end{proof}

Lemma \ref{lem_CatPureTraces} is a variant of Lemma \ref{lem_EasyPureTraces} that will be used for the part of Theorem \ref{thm_MainApplications} dealing with (categorical) purity in Qcoh($X$).\footnote{A stronger but more complicated version of Lemma \ref{lem_CatPureTraces} can be proven, with weaker closure requirements on $\mathfrak{N}$, by invoking the notion of $\mu$-generated objects and the relevant theory from \cite{MR1294136}.}  A category $\mathcal{C}$ is \textbf{coherent in} $\text{Str} \mathcal{L}$ if for any $\text{Str} \mathcal{L}$-morphisms $f$ and $g$:  if $g$ and $gf$ are both in $\mathcal{C}$, then so is $f$.  Full subcategories of $\text{Str} \mathcal{L}$ are trivially coherent in $\text{Str} \mathcal{L}$.

\begin{lemma}\label{lem_CatPureTraces}

Suppose $\mathcal{C}$ is a coherent subcategory of $\text{Str} \mathcal{L}$, $\mu$ is an infinite regular cardinal, and $\mathfrak{N}$ is an $\mathcal{L}$-appropriate structure such that 
\begin{enumerate}[label=(\roman*)]

	\item $\text{Pres}_\mu(\mathcal{C}) \subset \mathfrak{N}$ (i.e., every $\mu$-presentable object in $\mathcal{C}$ is isomorphic to an element of $\mathfrak{N}$)
	\item $\mathfrak{N}$ is $\le |D|$-closed for every $D$ in the representative set $\text{Pres}_\mu(\mathcal{C})$.  (together with $D \in \mathfrak{N}$ this implies $D \subset \mathfrak{N}$)

	\item $\mathfrak{N} \prec_1 (V,\in, \mu, \mathcal{C})$.\footnote{In applications, take $\theta$ with $H_\theta$ closed under the relevant $\mathcal{C}$ operations.  This is formalizable in ZFC when $\mathcal{C}$ is a definable class (using L\'evy-Montague Reflection), or in G\"odel-Bernays class theory.}

\end{enumerate} 

Then
\begin{enumerate}
	\item\label{item_TraceIncPure} For any $C \in \mathfrak{N} \cap \text{Obj} \mathcal{C}$:  if the trace inclusion $\mathfrak{N} \cap C \subset C$ is in $\mathcal{C}$, then it is $\mu$-pure in $\mathcal{C}$.

	\item\label{item_RestrictionPure} For any $\mu$-pure $\mathcal{C}$-morphism $f \in \mathfrak{N}$:  if the square $\mathcal{S}(\text{Str} \mathcal{L}, f,\mathfrak{N})$ is in $\mathcal{C}$ (i.e., the vertical trace inclusions are in $\mathcal{C}$ and $f \restriction \mathfrak{N}$ is in $\mathcal{C}$) then all maps in it are $\mu$-pure.   
	
 \item\label{item_SpecialCaseFull} If $\mathcal{C}$ is a \textbf{full} subcategory of $\text{Str} \mathcal{L}$ and $\mathfrak{N} \cap C \in \text{Obj} \mathcal{C}$ for every $C \in \mathfrak{N} \cap \text{Obj} \mathcal{C}$, then $\mathfrak{N}$ is appropriate for the category 
 \[
 \left( \text{Obj} \mathcal{C}, \mu \text{-Pure} \right).
 \] 
\end{enumerate}

\end{lemma}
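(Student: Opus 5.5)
The plan is to verify the purity diagram directly, using elementarity of $\mathfrak{N}$ to pull witnesses inside and the closure hypotheses to make the small diagram an element of $\mathfrak{N}$.

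\emph{Item (1).} Fix $C \in \mathfrak{N} \cap \text{Obj}\,\mathcal{C}$ such that the inclusion $\iota: \mathfrak{N}\cap C \subset C$ is in $\mathcal{C}$. Take a commutative square in $\mathcal{C}$ with $u: S \to \mathfrak{N}\cap C$, $s: S \to T$ and $v: T \to C$ satisfying $v s = \iota u$, where $S,T$ are $\mu$-presentable. By (i) we may replace $S,T$ by isomorphic members $D_S,D_T$ of the representative set lying in $\mathfrak{N}$; by (ii), $D_S,D_T \subset \mathfrak{N}$ and $\mathfrak{N}$ is $\le|D|$-closed.

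The maps $u,s,v$, viewed as sets of pairs, have size $\le |D_S|$ or $\le |D_T|$. Their values lie in $\mathfrak{N}\cap C$, except that $v$ lands in $C$ and not necessarily in $\mathfrak{N}$. This is the main obstacle. The remedy is to use elementarity instead of asking that $v \in \mathfrak{N}$. The parameters $u$, $s$, $D_S$, $D_T$, $C$ lie in $\mathfrak{N}$, since $u$ and $s$ have range and domain inside $\mathfrak{N}$ and (ii) applies. The statement ``there is a $\mathcal{C}$-morphism $w: D_T \to C$ with $w s = u$ (as maps into $C$)'' is $\Sigma_1$ in $(V,\in,\mathcal{C})$ and is witnessed by $v$. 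Hence by (iii) there is such a $w \in \mathfrak{N}$. Since $D_T \subset \mathfrak{N}$ and $w \in \mathfrak{N}$, every value $w(t)$ lies in $\mathfrak{N}\cap C$.

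So $w$ factors as $\iota \circ w'$ with $w': D_T \to \mathfrak{N}\cap C$ in $\text{Str}\,\mathcal{L}$. Both $\iota$ and $\iota w' = w$ are in $\mathcal{C}$, so coherence puts $w'$ in $\mathcal{C}$. Moreover $w' s = u$ because $\iota$ is injective. This gives the required diagonal.

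\emph{Item (2).} The vertical maps are handled by (1). For $f\restriction\mathfrak{N}: \mathfrak{N}\cap A \to \mathfrak{N}\cap B$, take a square with $u: S \to \mathfrak{N}\cap A$, $s: S \to T$ and $v: T \to \mathfrak{N}\cap B$ satisfying $v s = (f\restriction\mathfrak{N})u$. Normalize $S,T$ into $\mathfrak{N}$ as before. Now $u$, $s$ and $v$ all have domain and range inside $\mathfrak{N}$, so by (ii) they are elements of $\mathfrak{N}$.

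Compose with the inclusions into $A$ and $B$; these compositions are in $\mathcal{C}$ because the square is in $\mathcal{C}$. Purity of $f$ then yields some $w: T \to A$ with $w s = u$. By elementarity (iii) we may take $w \in \mathfrak{N}$, and then $w$ has range in $\mathfrak{N}\cap A$. Coherence again makes the corestriction a $\mathcal{C}$-morphism, which gives the diagonal.

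\emph{Item (3).} For a full subcategory, every $\text{Str}\,\mathcal{L}$ map between objects of $\mathcal{C}$ is in $\mathcal{C}$. So for a $\mu$-pure $f \in \mathfrak{N}$, the square $\mathcal{S}(\text{Str}\,\mathcal{L},f,\mathfrak{N})$ lies in $\mathcal{C}$ by the hypothesis on objects, and (2) shows all of its arrows are $\mu$-pure. Hence $\mathfrak{N}$ is appropriate for $(\text{Obj}\,\mathcal{C}, \mu\text{-Pure})$.

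A minor point to check throughout is that replacing $S,T$ by isomorphic copies preserves the diagonal-filling property. This holds by composing with the isomorphisms, which are $\mathcal{C}$-morphisms.
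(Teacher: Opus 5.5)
Your proposal is correct and follows the paper's proof essentially step for step. You normalize $S,T$ into $\mathfrak{N}$ via (i)--(ii), use the closure hypothesis to make the small maps (composed with the inclusion into $C$, resp.\ $A$) elements of $\mathfrak{N}$, pull a diagonal witness into $\mathfrak{N}$ by $\Sigma_1$-elementarity in $(V,\in,\mu,\mathcal{C})$, note that its range lies in $\mathfrak{N}\cap C$ because $T\subset\mathfrak{N}$, and corestrict using coherence, with Part (3) then following from (1), (2) and fullness exactly as in the paper.
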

\begin{proof}

Part \ref{item_TraceIncPure}:  suppose $S$ and $T$ are $\mu$-presentable and the following is a commutative diagram in $\mathcal{C}$:

\[\begin{tikzcd}[ampersand replacement=\&]
	{\mathfrak{N} \cap C} \&\& C \\
	S \&\& T
	\arrow["\subset", from=1-1, to=1-3]
	\arrow["{i_C}"{description}, shift right=3, draw=none, from=1-1, to=1-3]
	\arrow["h", from=2-1, to=1-1]
	\arrow["g"', from=2-1, to=2-3]
	\arrow["k"', from=2-3, to=1-3]
\end{tikzcd}\]
We need to find a $\mathcal{C}$-morphism from $T$ to $\mathfrak{N} \cap C$ commuting with $g$ and $h$.

The assumptions ensure we can without loss of generality take $S$ and $T$ to each be \emph{elements and subsets} of $\mathfrak{N}$.  It follows by $\le |S|$-closure of $\mathfrak{N}$ that for any set function with domain $S$, if the image of that function is a subset of $\mathfrak{N}$, then the graph of the function (i.e., the function viewed as a set of ordered pairs, without mention of codomain) is an element of $\mathfrak{N}$.  In particular, $\text{graph}(g) \in \mathfrak{N}$, and since $T \in \mathfrak{N}$ too, $g \in \mathfrak{N}$.    And $\text{graph}(h) \in \mathfrak{N}$; since $C \in \mathfrak{N}$, it follows that $h^*:= i_C  h: S \to C$ is an element of $\mathfrak{N}$.

Now we cannot similarly conclude $k$ is a element of $\mathfrak{N}$, because $k$ doesn't necessarily map into $\mathfrak{N}$.  But $k$ witnesses the following assertion in the universe, where $H:=\text{Hom}_{\mathcal{C}}(T,C)$:
\[
(V,\in, \text{Obj} \mathcal{C}, \text{Mor} \mathcal{C}) \models \exists \ell \left( \ell \in H \text{ and }  \forall x \in S \  \ell g(x) = h^*(x) \right).
\]
All free parameters in that assertion are elements of $\mathfrak{N}$, so $\mathfrak{N}$ also satisfies the statement.  Say $\ell \in \mathfrak{N}$ is a witness.  Now $\ell \in \text{Hom}_{\mathcal{C}}(T,C)$, but since $\ell \in \mathfrak{N}$ and $T \subset \mathfrak{N}$, the image of $\ell$ is contained in $\mathfrak{N} \cap C$.  So there is a $\text{Str} \mathcal{L}$-morphism $\ell^*: T \to \mathfrak{N} \cap C$ with $i_C \ell^* = \ell$ (i.e., $\ell^*$ is just $\ell$ but with codomain $\mathfrak{N} \cap C$; this is clearly an $\mathcal{L}$-homomorphism).  Furthermore, $\ell^*  g = h$.  And since $\ell$ and $i_C$ are $\mathcal{C}$-morphisms and $i_C \ell^* = \ell$, coherency of $\mathcal{C}$ in $\text{Str} \mathcal{L}$ implies $\ell^*$ is a $\mathcal{C}$-morphism.    So $\ell^*$ is the desired $\mathcal{C}$-morphism from $T$ to $\mathfrak{N} \cap C$ commuting with $g$ and $h$.

Part \ref{item_RestrictionPure}:  suppose $S$ and $T$ are $\mu$-presentable, $f$ is $\mu$-pure with $f \in \mathfrak{N}$, and
\[\begin{tikzcd}[ampersand replacement=\&,cramped]
	C \&\& D \\
	{\mathfrak{N} \cap C} \&\& {\mathfrak{N} \cap D} \\
	S \&\& T
	\arrow["f", from=1-1, to=1-3]
	\arrow["{i_C}", from=2-1, to=1-1]
	\arrow["{f \restriction \mathfrak{N}}", from=2-1, to=2-3]
	\arrow["{i_D}"', from=2-3, to=1-3]
	\arrow["h", from=3-1, to=2-1]
	\arrow["g"', from=3-1, to=3-3]
	\arrow["k"', from=3-3, to=2-3]
\end{tikzcd}\]
is a commutative diagram in $\mathcal{C}$.  By similar arguments as above we conclude $g$, $h^*:= i_C  h$, and $k^*:= i_D  k$ are all elements of $\mathfrak{N}$ (note this time $k$ does map into $\mathfrak{N}$ so it is even easier than part \ref{item_TraceIncPure}).  Since $f$ is $\mu$-pure there is an $r: T \to C$ such that $rg = h^*$.  By elementarity of $\mathfrak{N}$ there is such an $r \in \mathfrak{N}$.  And since $\text{dom}(r) = T \subset \mathfrak{N}$, the image of $r$ is contained in $\mathfrak{N} \cap C$.  Let $r_0$ be the $\mathcal{L}$-homomorphism that agrees with $r$ on $T$ but has codomain $\mathfrak{N} \cap C$.  So $r = i_C r_0$ and by coherency of $\mathcal{C}$ in $\text{Str} \mathcal{L}$, $r_0$ is a $\mathcal{C}$-morphism.

Part \ref{item_SpecialCaseFull} follows immediately from Parts \ref{item_TraceIncPure} and \ref{item_RestrictionPure}, since full subcategories are trivially coherent.

\end{proof}

\section{Proof of Theorem \ref{thm_MainApplications}}

We focus first on $R$-Mod and $S$-Act (assuming $S$ is linearly orderd) and return to Qcoh($X$) later.

\subsection{The $R$-Mod and $S$-Act cases}\label{subsec_RMod_SAct_cases}

Assume $\kappa > |R|,|S|$, $\lambda^{<\kappa} = \lambda$, and that $\mathbf{K}$ is both $<\kappa$- and $\lambda$-Uniformly Stationary Kaplansky, as witnessed by stationary classes $\Gamma_{<\kappa} \subset [V]^{<\kappa}$ and $\Gamma_\lambda \subset [V]^\lambda$, respectively.  So
\begin{equation}\label{eq_WhatHoldsOnGammas}
(\forall \mathfrak{N} \in \Gamma_{<\kappa} \cup \Gamma_\lambda) \ (\forall K \in \mathfrak{N} \cap \mathbf{K}) \left( \mathfrak{N}\cap K \in \mathbf{K} \text{ and } \frac{K}{\mathfrak{N} \cap K} \in \mathbf{K} \right).
\end{equation}

Let $\mathcal{C}$ be either $R$-Mod or $S$-Act (with $S$ linearly ordered) and $\mathcal{L}$ be either the language of $R$-modules or the language of $S$-Acts.  Then $\mathcal{C}$ has pushouts, and every $\mathcal{L}$-appropriate structure is $\mathcal{C}$-appropriate since the trace inclusions are $\mathcal{L}$-elementary embeddings.  Let $\mathcal{K}$ be either $(\mathbf{K}, \text{pure})$ or $\left( \mathbf{K}, \mathbf{K}\text{-pure} \right)$; the $(\mathbf{K}, \text{Mono})$ case is an easier version that doesn't require linear orderability of $S$.  We verify that $\kappa$, $\lambda$, $\mathcal{K}$, and $\mathcal{C}$ satisfy the assumptions of Theorem \ref{thm_MainThm_Stability}.  Assumption \ref{item_ClosedUnderIso} follows from isomorphism-closure of $\mathbf{K}$, and assumption \ref{item_cardinal_arith} is a standing assumption in this section.  It remains to verify clauses \ref{item_K_lambda_stat_LS} and \ref{item_factoring}.  

We start with \ref{item_factoring}.  Suppose $\mathfrak{N} \in \Gamma_{<\kappa}$ and $f:K_1 \to K_2$ is in $\mathfrak{N} \cap \text{Mor} \mathcal{K}$.  Without loss of generality, $f$ is an inclusion.  Consider the diagram $\mathcal{D}(\mathcal{C},f,\mathfrak{N})$:
\[\begin{tikzcd}[ampersand replacement=\&]
	{K_1} \&\&\&\& {K_2} \\
	\\
	\&\& {P(\mathcal{C},f,\mathfrak{N})} \\
	\\
	{\mathfrak{N}\cap K_1} \&\&\&\& {\mathfrak{N} \cap K_2}
	\arrow["{f \in \mathfrak{N} \cap \text{Mor} \mathcal{K}}", from=1-1, to=1-5]
	\arrow["\subset"', shift right, draw=none, from=1-1, to=1-5]
	\arrow["{\widetilde{f \restriction \mathfrak{N}}^{\mathcal{C}}}"', from=1-1, to=3-3]
	\arrow["{f/^{\mathcal{C}} \mathfrak{N}}"{description}, from=3-3, to=1-5]
	\arrow["\ulcorner"{anchor=center, pos=0.125, rotate=-90}, draw=none, from=3-3, to=5-1]
	\arrow["\subset", from=5-1, to=1-1]
	\arrow["{f \restriction \mathfrak{N}}", from=5-1, to=5-5]
	\arrow["\subset"', from=5-5, to=1-5]
	\arrow[from=5-5, to=3-3]
\end{tikzcd}\]

By Fact \ref{fact_WhatD_looks_like} the pushout $P=P(\mathcal{C},f,\mathfrak{N})$ can be taken to be $K_1 + (\mathfrak{N} \cap K_2)$ in the $R$-Mod case, and $K_1 \cup (\mathfrak{N} \cap K_2)$ in the $S$-Act case, and all arrows can be taken to be inclusions.  Since $f$ is pure, Lemma \ref{lem_EasyPureTraces} ensures all maps in the outer rectangle are pure.  Since a pushout of a pure map is pure in any locally finitely presentable category \cite[Proposition 15]{MR2086721}, the maps pointing to the pushout are also pure.   And \eqref{eq_WhatHoldsOnGammas} ensures that the bottom corners, and the quotients of the vertical trace inclusions, all lie in $\mathbf{K}$.  Since pushouts preserve both cokernels and Rees quotients, the pure map from $\mathfrak{N} \cap K_2$ into $P$ has cokernel (Rees quotient) in $\mathbf{K}$.  Since $\mathbf{K}$ is closed under pure extensions and $\mathfrak{N} \cap K_2 \in \mathbf{K}$, it follows that 
\begin{equation}\label{eq_P_in_K}
P \in \mathbf{K}.
\end{equation}  
 
Next we claim that $f/^{\mathcal{C}} \mathfrak{N}$ is pure.
\begin{itemize}
	\item In the $R$-Mod case:  in order to show that the inclusion $f/^{\mathcal{C}} \mathfrak{N}: P  \to K_2$ is pure, it suffices to show that for any finitely-presented module $X$, the natural map from $\text{Hom}(X,K_2)$ to $\text{Hom}\left(X, K_2/P \right)$ is surjective.  The purity of $K_1$ in $K_2$ implies surjectivity of the natural map $\text{Hom}(X,K_2) \to \text{Hom}(X,K_2/K_1)$.  Since $K_1$ and $K_2$ are elements of $\mathfrak{N}$, so is $K_2/K_1$; and by Lemma \ref{lem_EasyPureTraces}, 
\begin{equation}\label{eq_N_cap_B_mod_A}
\mathfrak{N} \cap (K_2/K_1) \text{ is } \text{ pure in } K_2/K_1.
\end{equation}
So the natural homomorphism $\text{Hom}\left(X, K_2/K_1 \right) \to \text{Hom}\left(X, \frac{K_2/K_1}{\mathfrak{N} \cap (K_2/K_1)} \right) $ is surjective.  And by Fact \ref{fact_WhatD_looks_like},
\begin{equation}\label{eq_CalcQuotientOfPushout}
\frac{K_2}{P} \simeq \frac{K_2/K_1}{\mathfrak{N} \cap (K_2/K_1)}.
\end{equation}
So each map in the composition below is surjective, yielding purity of $P$ in $K_2$:
\begin{equation}\label{eq_TwoSurjections}
\mathrm{Hom}(X, K_2) \;\twoheadrightarrow\; \mathrm{Hom}(X, K_2/K_1) \;\twoheadrightarrow\; \mathrm{Hom}\!\Big(X,\, \underbrace{(K_2/K_1)\big/\!\big(\mathfrak{N}\cap(K_2/K_1)\big)}_{\simeq K_2/P}\Big)  
\end{equation}

	\item In the $S$-Act case:  the assumption that $S$ is linearly ordered ensures that a union of two pure subacts is pure.  Since $K_1$ and $\mathfrak{N} \cap K_2$ are each pure in $K_2$, this implies $P = K_1 \cup (\mathfrak{N} \cap K_2)$ is pure in $K_2$.  
\end{itemize}

Hence $\widetilde{f \restriction \mathfrak{N}}^{\mathcal{C}}$ and $f/^{\mathcal{C}} \mathfrak{N}$ are both pure, and by \eqref{eq_P_in_K} their domains and codomains are in $\mathbf{K}$.  So they are in the category $(\mathbf{K},\text{Pure})$.  This concludes the verification of clause \ref{item_factoring} of Theorem \ref{thm_MainThm_Stability} for $\mathcal{K} = (\mathbf{K}, \text{pure})$.  In the $(\mathbf{K}, \mathbf{K}\text{-Pure})$ case it remains to show that the cokernels (or Rees quotients) of $f \restriction \mathfrak{N}$ and $f/^{\mathcal{C}} \mathfrak{N}$ are in $\mathbf{K}$.  But in this case, since $f \in \mathbf{K}\text{-pure}$, $K_2/K_1 \in \mathbf{K}$.  So by \eqref{eq_WhatHoldsOnGammas} it follows that
\begin{equation}
\mathfrak{N} \cap \frac{K_2}{K_1} \in \mathbf{K} \text{ and } \frac{K_2/K_1}{\mathfrak{N} \cap (K_2/K_1)} \in \mathbf{K}.
\end{equation}
Those objects are, respectively, isomorphic to the cokernel (quotient) of $f \restriction \mathfrak{N}$ and $f/^{\mathcal{C}} \mathfrak{N}$, by Fact \ref{fact_WhatD_looks_like}.  And pushouts preserve cokernels and Rees quotients, so $\widetilde{f \restriction \mathfrak{N}}^{\mathcal{C}}$ has cokernel (Rees quotient) in $\mathbf{K}$.  So both $\widetilde{f \restriction \mathfrak{N}}^{\mathcal{C}}$ and $f/^{\mathcal{C}} \mathfrak{N}$ are morphisms in the category $(\mathbf{K}, \mathbf{K} \text{-pure})$.  This completes the verification of clause \ref{item_factoring} of Theorem \ref{thm_MainThm_Stability}.

To verify clause \ref{item_K_lambda_stat_LS} of Theorem \ref{thm_MainThm_Stability}, consider any $\mathfrak{N} \in \Gamma_\lambda$ and any $\mathcal{K}$-morphism $f \in \mathfrak{N}$.  The same argument as above (but ignoring the parts involving the pushout) shows that the outer square of $\mathcal{D}(\mathcal{C},f,\mathfrak{N})$---i.e., the square $\mathcal{S}(\text{Str} \mathcal{L}, f,\mathfrak{N})$--- lies in $\mathcal{K}$.  In other words, $\mathcal{K}$ has the LS property on $\Gamma_{\lambda}$.

\begin{remark}
A similar argument works with all instances of ``pure" replaced by ``$\mu$-pure", as long as the stationary subclasses of $[V]^{<\kappa}$ and $[V]^\lambda$ witnessing the $<\kappa$ and $\lambda$-Uniformly Stationary Kaplansky properties of $\mathbf{K}$ each have stationary intersection with the class of $<\mu$-closed structures.  
\end{remark}

\subsection{The Qcoh($X$) case}\label{sec_Qcoh_Kaplansky}

We first explain how Qcoh($X$) can be regarded as a subcategory of $\text{Str} \mathcal{L}$ for a certain signature $\mathcal{L}$.  This is due to an equivalence of categories due to Enochs-Estrada~\cite{MR2139915} (see also \cite{MR2964610}).  The definition of the category Qcoh($\mathcal{R}$) of ``quasicoherent $\mathcal{R}$-modules'' is summarized below:
\begin{itemize}
	\item $\mathcal{R}: \mathcal{Q} \to \text{Rings}$ is a flat diagram of rings, i.e., a fixed covariant functor on some small (i.e., set-sized) category $\mathcal{Q}$ satisfying a kind of flatness property.  Both $\mathcal{Q}$ and $\mathcal{R}$ depend on the scheme $X$, though the exact nature of this dependence and the precise meaning of flatness in this context is not needed here (see \cite{MR2139915} for details).

	\item $\mathcal{M}$ is an \emph{$\mathcal{R}$-module} if it assigns to each object $v$ of 	$\mathcal{Q}$ an $\mathcal{R}(v)$-module $\mathcal{M}(v)$, and to each arrow $a\colon v\to w$ in $\mathcal{Q}$ an $\mathcal{R}(v)$-linear map $\mathcal{M}(a)\colon\mathcal{M}(v)\to\mathcal{M}(w)$ (functorially, with $\mathcal{M}(w)$ regarded as an $\mathcal{R}(v)$-module along the ring homomorphism $\mathcal{R}(a)$).  An $\mathcal{R}$-module morphism $\mathcal{M}\to\mathcal{M}'$ is a family of $\mathcal{R}(v)$-linear maps $\mathcal{M}(v)\to\mathcal{M}'(v)$ commuting with the transition maps $\mathcal{M}(a)$.  \textbf{$\boldsymbol{\mathcal{R}}$-Mod} denotes the category of $\mathcal{R}$-modules with $\mathcal{R}$-module homomorphisms.

	\item Let $\mathcal{L}_{\mathcal{R}}$ be the signature with a sort for each object $v$ of $\mathcal{Q}$, on that sort the abelian-group symbols together with a unary operation symbol $\cdot_r$ for each $r\in\mathcal{R}(v)$, and for each arrow $a: u \to v$ of $\mathcal{Q}$ a unary function symbol $F_a$ with input sort $u$ and output sort $v$.  So $\mathcal{L}_{\mathcal{R}}$ is an $\text{Obj}\mathcal{Q}$-sorted, finitary signature.  Then $\mathcal{R}$-Mod can be viewed as a full subcategory of $\text{Str} \mathcal{L}_{\mathcal{R}}$: $\mathcal{R}$-modules correspond to those $\mathcal{L}_{\mathcal{R}}$-structures
	\[
	\mathfrak{M} = \left( \bigsqcup_{v \in \text{Obj} \mathcal{Q}} M_v, \ \left( +_v, \cdot_r \right)_{v \in \text{Obj}\mathcal{Q}, r \in \mathcal{R}(v) }, \  \left(F_a: M_{\text{dom}(a)} \to M_{\text{cod}(a)}\right)_{a \in \text{Mor} \mathcal{Q}} \right)
	\]
	such that each $(M_v,+_v, \cdot_{r})_{r \in \mathcal{R}(v)}$ is an $\mathcal{R}(v)$-module, each $F_a$ is an $\mathcal{R}(\text{dom}(a))$-linear map, $F_{\text{id}_v} = \text{id}_{M_v}$ for each object $v$, and $F_{a \circ b} = F_a \circ F_b$ whenever the composition $a \circ b$ is defined in $\mathcal{Q}$.\footnote{Every $\mathcal{L}_{\mathcal{R}}$-homomorphism between two such structures is automatically an $\mathcal{R}$-module homomorphism, because the $\mathcal{L}_{\mathcal{R}}$-homomorphism commutes with each $\cdot_r$ and each $F_a$.}

	\item An $\mathcal{R}$-module $\mathcal{M}$ is \textbf{quasicoherent} if for every arrow $a\colon v\to w$ in $\mathcal{Q}$ the induced map
	\[
		\mathcal{R}(w)\otimes_{\mathcal{R}(v)}\mathcal{M}(v)\longrightarrow\mathcal{M}(w),
		\qquad s\otimes x\mapsto s\cdot\mathcal{M}(a)(x),
	\]
	is an isomorphism of $\mathcal{R}(w)$-modules.  \end{itemize}

\textbf{Qcoh($\mathcal{R}$)} denotes the full subcategory of $\mathcal{R}$-Mod consisting of the quasicoherent $\mathcal{R}$-modules; it is therefore also a full subcategory of $\text{Str} \mathcal{L}_{\mathcal{R}}$.  If the flat diagram $\mathcal{R}$ of rings arises as in \cite{MR2139915} from a quasicompact, semiseparated scheme $X$, then Qcoh($X$) is equivalent to Qcoh($\mathcal{R})$, and is a Grothendieck category, with colimits computed coordinatewise; in particular, cokernels and pushouts are computed coordinatewise.

\begin{lemma}\label{lem_quasicoherence_downward}
If $\mathcal{R}: \mathcal{Q} \to \text{Rings}$ is a flat diagram of rings and $\mathfrak{N}$ is $\mathcal{L}_{\mathcal{R}}$-appropriate, then $\mathfrak{N}$ is appropriate for the category Qcoh($\mathcal{R}$). 
\end{lemma}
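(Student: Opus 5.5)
Since Qcoh($\mathcal{R}$) is a full subcategory of $\text{Str}\mathcal{L}_{\mathcal{R}}$, the only thing to check is that the objects in the square $\mathcal{S}(\text{Str}\mathcal{L}_{\mathcal{R}},f,\mathfrak{N})$ are quasicoherent, for every $f\in\mathfrak{N}\cap\text{Mor}\,\text{Qcoh}(\mathcal{R})$. Fullness then gives the arrows for free: the trace inclusions and $f\restriction\mathfrak{N}$ are $\mathcal{L}_{\mathcal{R}}$-homomorphisms. So the whole lemma reduces to the following claim: if $\mathcal{M}\in\mathfrak{N}$ is quasicoherent, then $\mathfrak{N}\cap\mathcal{M}$ is a quasicoherent $\mathcal{R}$-module. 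By the remarks after \eqref{eq_Def_L_Appropriate}, $\mathfrak{N}\cap\mathcal{M}$ is an $\mathcal{L}_{\mathcal{R}}$-elementary substructure of $\mathcal{M}$. Hence it is an $\mathcal{R}$-module with sorts $M'_v:=\mathfrak{N}\cap\mathcal{M}(v)$; the module axioms and functoriality are universal sentences, so they pass to substructures.

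Fix an arrow $a\colon v\to w$ of $\mathcal{Q}$. We must show that $\phi'\colon \mathcal{R}(w)\otimes_{\mathcal{R}(v)}M'_v\to M'_w$, $s\otimes x\mapsto s\cdot F_a(x)$, is bijective. The plan is to factor it as
\[
\mathcal{R}(w)\otimes M'_v\xrightarrow{\ \iota\ }\mathcal{R}(w)\otimes \mathcal{M}(v)\xrightarrow{\ \phi\ }\mathcal{M}(w),
\]
where $\phi$ is the isomorphism given by quasicoherence of $\mathcal{M}$.

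\textbf{Surjectivity onto $M'_w$.} The image of $\phi\iota$ lies in $M'_w$ because $\mathfrak{N}\cap\mathcal{M}$ is closed under $F_a$, the scalar operations, and addition. For the reverse inclusion, take $y\in M'_w$. In $V$ there exist finitely many $s_k\in\mathcal{R}(w)$ and $x_k\in\mathcal{M}(v)$ with $y=\sum_k s_kF_a(x_k)$, by surjectivity of $\phi$. This is a $\Sigma_1$ statement whose parameters $y,\mathcal{M},a,\mathcal{R}$ all lie in $\mathfrak{N}$, so $\mathfrak{N}\prec_1 V$ yields witnesses $s_k,x_k\in\mathfrak{N}$. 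Each witness lies in $\mathfrak{N}$: the finite tuple is an element, and finite tuples decompose inside $\mathfrak{N}$. Thus $y$ is in the image of $\phi'$.

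\textbf{Injectivity of $\phi'$.} Since $\phi$ is injective, it suffices to show that $\iota$ is injective. This is the main obstacle, because $\mathcal{R}(w)$ need not be flat over $\mathcal{R}(v)$ in any usable sense here. The approach is to show that $M'_v$ is a pure submodule of $\mathcal{M}(v)$ as an $\mathcal{R}(v)$-module; tensoring with any module preserves injectivity of pure monomorphisms. Purity follows from elementarity, as in Lemma~\ref{lem_EasyPureTraces}. Suppose a finite system of linear equations over $\mathcal{R}(v)$ with parameters in $M'_v$ has a solution in $\mathcal{M}(v)$. The system is a finite object built from elements of $\mathfrak{N}$, so it lies in $\mathfrak{N}$, and $\Sigma_1$-elementarity produces a solution in $\mathfrak{N}$, hence in $M'_v$.

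Alternatively, injectivity can be shown directly. Suppose $\sum_k s_k\otimes x_k$, with entries in $\mathfrak{N}$, is $0$ in $\mathcal{R}(w)\otimes\mathcal{M}(v)$. This vanishing is witnessed by a finite relation-computation in the tensor product, a $\Sigma_1$ fact with parameters in $\mathfrak{N}$. Reflecting it into $\mathfrak{N}$ gives a witnessing computation inside $M'_v$, so the element is already $0$ in $\mathcal{R}(w)\otimes M'_v$. Either route gives bijectivity of $\phi'$, so $\mathfrak{N}\cap\mathcal{M}$ is quasicoherent, and this completes the proof.
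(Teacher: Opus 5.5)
Your proof is correct and takes essentially the paper's route. You reduce via fullness to quasicoherence of $\mathfrak{N}\cap\mathcal{M}$, get surjectivity by reflecting a representation $y=\sum_k s_k F_a(x_k)$ into $\mathfrak{N}$ (you use $\Sigma_1$-elementarity directly, where the paper says the trace inclusion reflects pp formulas), and get injectivity from purity of $\mathfrak{N}\cap\mathcal{M}(v)$ in $\mathcal{M}(v)$ together with the fact that tensoring preserves injectivity of pure monomorphisms; your alternative tensor-vanishing argument is also valid, being the equational criterion for vanishing of tensors, i.e.\ the same pp-reflection.
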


\begin{proof}
Since Qcoh($\mathcal{R}$) is a \emph{full} subcategory of $\text{Str}\,\mathcal{L}_{\mathcal{R}}$, it
suffices to show that $\mathfrak{N} \cap \mathcal{M}$ is quasicoherent whenever
$\mathcal{M} \in \mathfrak{N}$ is quasicoherent.  Since the trace inclusion 
$\mathcal{N}:=\mathfrak{N}\cap\mathcal{M} \subset \mathcal{M}$ is $\mathcal{L}_{\mathcal{R}}$-elementary, in particular $\mathcal{N}$ is an $\mathcal{R}$-module and the inclusion  reflects positive primitive formulas.
Fix $a: v \to w$ in $\mathcal{Q}$ and set $R:=\mathcal{R}(v)$, $S:=\mathcal{R}(w)$.

\emph{Surjectivity of $S \otimes_R \mathcal{N}(v) \to \mathcal{N}(w)$:}  Given
$y \in \mathcal{N}(w)$, quasicoherence of $\mathcal{M}$ yields $n<\omega$, $s_i \in S$ and
$x_i \in \mathcal{M}(v)$ with $y = \sum_{i<n} s_i \cdot F_a(x_i)$.  Thus $\mathcal{M}$ satisfies
the pp formula $\exists \overline{x}\,\big(y = \sum_{i<n} \cdot_{s_i} F_a(x_i)\big)$.  Since the
parameter $y$ is in $\mathcal{N}$ and $\mathcal{N}$ reflects pp formulas, the $x_i$ may be taken in
$\mathcal{N}(v)$.

\emph{Injectivity of $S \otimes_R \mathcal{N}(v) \to \mathcal{N}(w)$:}   Restricting to pp formulas of sort $v$ shows $\mathcal{N}(v)$ is a pure
submodule of $\mathcal{M}(v)$, so $S \otimes_R \mathcal{N}(v) \to S \otimes_R \mathcal{M}(v)$ is
injective.  Composing with the isomorphism $S \otimes_R \mathcal{M}(v) \simeq \mathcal{M}(w)$ given by quasicoherence of $\mathcal{M}$ shows that $s \otimes x \mapsto s\cdot\mathcal{M}(a)(x)$ is injective on
$S \otimes_R \mathcal{N}(v)$; since it factors through $\mathcal{N}(w)$, the canonical map
$S \otimes_R \mathcal{N}(v) \to \mathcal{N}(w)$ is injective.
\end{proof}

For the category Qcoh($\mathcal{R}$) there is a weaker (generally non-equivalent) version of purity than the categorical notion from Section \ref{sec_PurityAndTraces}.  A morphism $f: \mathcal{M} \to \mathcal{N}$ in Qcoh($\mathcal{R}$) is \textbf{geometrically (or stalkwise) $\boldsymbol{\mu}$-pure} if $f(v): \mathcal{M}(v) \to \mathcal{N}(v)$ is a $\mu$-pure embedding of $\mathcal{R}(v)$ modules, for every $v \in \text{Obj} \mathcal{Q}$.  When $\mathcal{R}$ arises as in Enochs-Estrada~\cite{MR2139915} from a quasicompact semiseparated scheme $X$, geometric purity corresponds under the Enochs--Estrada equivalence to geometric purity of Qcoh($X$): the monomorphisms in Qcoh($X$) that remain monic after tensoring with every quasicoherent sheaf.  This is the purity notion that the quasicoherent sheaf literature regards as the natural one (\cite{MR3682629}, \cite{MR3572762}).  We use $\text{Pure}_{geo}$ to denote this version of purity.

\begin{theorem}\label{thm_Precise_Qcoh}
Suppose $\mathcal{R}: \mathcal{Q} \to \text{Rings}$ is a flat diagram of rings, and $\mathbf{K}$ is an isomorphism-closed subclass of Qcoh($\mathcal{R}$) that is closed under geometrically pure extensions.  Suppose $\kappa$ is a regular uncountable cardinal larger than $\left\vert \mathcal{L}_{\mathcal{R}} \right\vert$.  If $\lambda$ is a cardinal, $\lambda^{<\kappa} = \lambda$, and $\mathbf{K}$ is both $<\kappa$-Uniformly Stationary Kaplansky and $\lambda$-Uniformly Stationary Kaplansky, then each of
\begin{equation}\label{eq_GeoPure}
\left( \mathbf{K}, \text{Pure}_{\text{geo}} \right), \left( \mathbf{K}, \mathbf{K}\text{-Pure}_{\text{geo}} \right), \left( \mathbf{K}, \text{Mono} \right)
\end{equation}
is $\lambda$-stable.  

Let $\text{FP}$ be a representative set of finitely presentable quasicoherent $\mathcal{R}$-modules and 
\[
\zeta:= \left| \text{FP} \cup \bigcup \text{FP} \right|^+.
\]
If the stationary subclasses of $[V]^{<\kappa}$ and $[V]^\lambda$ witnessing the $<\kappa$- and $\lambda$-Uniformly Stationary Kaplansky Property of $\mathbf{K}$ each contain stationarily many $<\zeta$-closed structures, then the conclusion holds also for categorical purity in Qcoh($X$) (and only assuming $\mathbf{K}$ is closed under categorically pure extensions).
\end{theorem}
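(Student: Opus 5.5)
The plan is to apply Theorem \ref{thm_MainThm_Stability} with $\mathcal{C} := \text{Qcoh}(\mathcal{R})$, viewed as a full subcategory of $\text{Str}\,\mathcal{L}_{\mathcal{R}}$, and with $\mathcal{K}$ each of the categories in \eqref{eq_GeoPure}. This parallels Subsection \ref{subsec_RMod_SAct_cases}. Since $\mathcal{L}_{\mathcal{R}}$ is finitary, the cardinal arithmetic hypothesis on $\kappa$ is automatic, and $\kappa > |\mathcal{L}_{\mathcal{R}}|$ is given. Qcoh($\mathcal{R}$) has pushouts, computed coordinatewise. Assumption \ref{item_ClosedUnderIso} follows from isomorphism-closure of $\mathbf{K}$ and the fact that geometric purity and cokernels are isomorphism-invariant. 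Lemma \ref{lem_quasicoherence_downward} shows that every $\mathcal{L}_{\mathcal{R}}$-appropriate $\mathfrak{N}$ is Qcoh($\mathcal{R}$)-appropriate.

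To verify clause \ref{item_factoring}, fix $\mathfrak{N}$ in the stationary class $\Gamma_{<\kappa}$ witnessing the $<\kappa$-Uniformly Stationary Kaplansky property, and an inclusion $f: K_1 \to K_2$ in $\mathfrak{N} \cap \text{Mor}\,\mathcal{K}$. We may assume $f$ is an inclusion because iso-closure lets us replace $f$ coordinatewise. Since everything is computed coordinatewise, Fact \ref{fact_WhatD_looks_like} applies at each sort $v$, and $\mathfrak{N} \cap K_i(v)$ is the trace at sort $v$. So the pushout $P$ has $P(v) = K_1(v) + (\mathfrak{N} \cap K_2(v))$. Moreover, $\text{coker}(f\restriction\mathfrak{N}) \simeq \mathfrak{N} \cap (K_2/K_1)$ and $K_2/P \simeq (K_2/K_1)/(\mathfrak{N}\cap(K_2/K_1))$; here $K_2/K_1 \in \mathfrak{N}$ is quasicoherent, and its trace is quasicoherent by Lemma \ref{lem_quasicoherence_downward}.

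Geometric purity is stalkwise purity of modules, so the $R$-Mod argument runs verbatim at each $\mathcal{R}(v)$:
\begin{itemize}
\item Lemma \ref{lem_EasyPureTraces} gives purity of the trace inclusions and of $f \restriction \mathfrak{N}$ at each sort.
\item Pushouts of pure maps in $\mathcal{R}(v)$-Mod are pure.
\item The two-surjection argument \eqref{eq_TwoSurjections} gives purity of $f/^{\mathcal{C}}\mathfrak{N}$.
\end{itemize}
Membership in $\mathbf{K}$ then follows as before: \eqref{eq_WhatHoldsOnGammas} puts $\mathfrak{N}\cap K_2$ and the quotients in $\mathbf{K}$, and closure under geometrically pure extensions yields $P \in \mathbf{K}$. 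The $\mathbf{K}$-Pure and Mono cases are handled by the same cokernel identifications. Clause \ref{item_K_lambda_stat_LS} is the same argument on $\Gamma_\lambda$ with the pushout part omitted. Theorem \ref{thm_MainThm_Stability} then gives $\lambda$-stability.

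For categorical purity, I would intersect $\Gamma_{<\kappa}$ and $\Gamma_\lambda$ with the $<\zeta$-closed structures that contain $\text{FP}$ and are elementary for $(V,\in,\omega,\mathcal{C})$; the hypothesis guarantees this intersection is still stationary. Since $\text{FP} \cup \bigcup\text{FP}$ has size $<\zeta$, each $D \in \text{FP}$ is both an element and a subset of such an $\mathfrak{N}$. Lemma \ref{lem_CatPureTraces}, with $\mu=\omega$, then shows that the square $\mathcal{S}$ consists of categorically pure maps.

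Three further steps are needed:
\begin{itemize}
\item Pushouts of categorically pure maps are pure, since Qcoh($X$) is locally finitely presentable and \cite[Proposition 15]{MR2086721} applies.
\item For purity of $f/^{\mathcal{C}}\mathfrak{N}$, the Grothendieck-category characterization (surjectivity of $\text{Hom}(X,-)$ onto cokernels, $X$ finitely presentable) lets the two-surjection argument \eqref{eq_TwoSurjections} go through. Its second surjection comes from Lemma \ref{lem_CatPureTraces}\eqref{item_TraceIncPure} applied to $K_2/K_1 \in \mathfrak{N}$.
\item Closure of $\mathbf{K}$ under categorically pure extensions then gives $P\in\mathbf{K}$.
\end{itemize}

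I expect the main obstacle to be this categorical-purity part. One must check that the hypotheses of Lemma \ref{lem_CatPureTraces} are met; in particular, the required elementarity in $(V,\in,\mathcal{C})$ has to be arranged within the given stationary class, which I would do by restricting to $\mathfrak{N}\cap H_\theta$ for suitable $\theta$. One must also confirm that the Qcoh cokernel identifications from Fact \ref{fact_WhatD_looks_like} hold coordinatewise.
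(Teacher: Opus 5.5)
Your proposal is correct and follows the paper's proof essentially step for step. You apply Theorem~\ref{thm_MainThm_Stability} with $\mathcal{C}=\text{Qcoh}(\mathcal{R})$. You obtain geometric purity by running the $R$-Mod argument coordinatewise, using Lemma~\ref{lem_quasicoherence_downward} and coordinatewise pushouts and cokernels. For categorical purity you replace Lemma~\ref{lem_EasyPureTraces} with Lemma~\ref{lem_CatPureTraces}, then use closure of purity under pushouts and the two-surjection argument.

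The bookkeeping you flag is left implicit in the paper: arranging $\text{FP}\cup\bigcup\text{FP}\subseteq\mathfrak{N}$ and the elementarity hypothesis of Lemma~\ref{lem_CatPureTraces} inside the witnessing stationary classes. It is harmless. Stationarity lets you add any small parameter set. Also, since $\text{Qcoh}(\mathcal{R})$ is full, the existential statements used there are already $\Sigma_1$, so the detour through $\mathfrak{N}\cap H_\theta$ is unnecessary.
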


The geometric part of Theorem \ref{thm_Precise_Qcoh} follows almost immediately from Lemma \ref{lem_quasicoherence_downward} together with the $R$-Mod version in Section \ref{subsec_RMod_SAct_cases}.  Let $\mathcal{C}=$Qcoh($\mathcal{R}$) and $\mathcal{K}$ be any of the categories listed in \eqref{eq_GeoPure}.  We verify clause \ref{item_factoring} of Theorem \ref{thm_MainThm_Stability}.  Let $\Gamma_{<\kappa}$ be the stationary subclass of $[V]^{<\kappa}$ witnessing that $\mathbf{K}$ is $<\kappa$-Uniformly Stationary Kaplansky.  Suppose $\mathfrak{N} \in \Gamma_{<\kappa}$ and $f \in \mathfrak{N}$ is a morphism in Qcoh($\mathcal{R}$) that is geometrically pure.  By Lemma \ref{lem_quasicoherence_downward}, the outer square of $\mathcal{D}(\mathcal{C},f,\mathfrak{N})$ is in Qcoh($\mathcal{R}$).  For each object $v \in \mathcal{Q} = \text{dom}(\mathcal{R})$ let $\mathcal{C}_v:= \mathcal{R}(v)$-Mod.  Since every such $v$ is an element of $\mathfrak{N}$, the $v$-th coordinate $f_v$ of $f$ is in $\mathfrak{N} \cap \mathcal{C}_v$, and is pure in $\mathcal{C}_v$.  By the argument in Section \ref{subsec_RMod_SAct_cases} for $R$-Mod (with $R:= \mathcal{R}(v)$), the arrows $\widetilde{f_v \restriction \mathfrak{N}}^{\mathcal{C}_v}$ and $f/^{\mathcal{C}_v} \mathfrak{N}$ in the square $\mathcal{D}(\mathcal{C}_v, f_v, \mathfrak{N})$ are pure in $\mathcal{C}_v$.  Since pushouts are computed coordinatewise in Qcoh($\mathcal{R}$), this implies $\widetilde{f \restriction \mathfrak{N}}^{\mathcal{C}}$ and $f/^{\mathcal{C}} \mathfrak{N}$ are both geometrically pure in Qcoh($\mathcal{R}$).    Since $\mathfrak{N} \in \Gamma_{<\kappa}$ the vertical trace inclusions in $\mathcal{D}(\mathcal{C},f,\mathfrak{N})$ have Qcoh($\mathcal{R}$)-cokernel in $\mathbf{K}$.  Since pushouts preserve cokernels and $\mathbf{K}$ is closed under (geometrically) pure extensions, we conclude that the pushout $P=P(\mathcal{C},f,\mathfrak{N})$ is in $\mathbf{K}$.  This completes the verification of Theorem \ref{thm_MainThm_Stability} clause \ref{item_factoring} for the category $(\mathbf{K}, \text{Pure}_{\text{geo}})$.  For the other classes we argue just as the $R$-mod case, using that cokernels in Qcoh($\mathcal{R}$) are computed coordinatewise.

Now turn to the case where purity is in the categorical sense of Section \ref{sec_PurityAndTraces}, and we make the additional assumption on $\mathbf{K}$ that the stationary subclasses of $[V]^{<\kappa}$ and $[V]^\lambda$ witnessing the Uniformly Stationary Kaplansky Properties each includes stationarily many $<\zeta$-closed structures.  Fix such an $\mathfrak{N}$ for the remainder of the argument.  Let $\mathcal{C}$ be Qcoh($\mathcal{R}$) and $\mathcal{K}$ be any one of 
\[
(\mathbf{K}, \text{Pure}), (\mathbf{K}, \mathbf{K}\text{-pure}), \text{ or } (\mathbf{K}, \text{Mono}),
\]
where ``pure" is in the categorical sense of Section \ref{sec_PurityAndTraces}, which is the convention for the remainder of this argument.  The rest of the argument looks identical to the argument in Section \ref{subsec_RMod_SAct_cases}, except we use Lemma \ref{lem_CatPureTraces} instead of Lemma \ref{lem_EasyPureTraces}.  Briefly:  suppose $f: K_1 \to K_2$ is a pure inclusion in Qcoh($\mathcal{R}$) with $f \in \mathfrak{N}$.  By $<\zeta$-closure of $\mathfrak{N}$ and Lemma \ref{lem_CatPureTraces}, all arrows in the outer square of $\mathcal{D}(\mathcal{C},f,\mathfrak{N})$ are pure, and it follows by closure of purity under pushouts in locally finitely presentable categories \cite[Proposition 15]{MR2086721} that the arrows pointing to $P=P(\mathcal{C},f,\mathfrak{N})$ are pure in Qcoh($\mathcal{R}$).  Since cokernels are preserved by pushouts, the arrow from $\mathfrak{N} \cap K_2$ into $P$ has cokernel in $\mathbf{K}$, and closure of $\mathbf{K}$ under pure extensions implies $P \in \mathbf{K}$.  By Lemma \ref{lem_CatPureTraces}, $\mathfrak{N} \cap (K_2/K_1)$ is pure in $K_2/K_1$; together with purity of $K_1$ in $K_2$ this implies that for any finitely presentable $X$, we have the following surjections:
\[
\text{Hom}(X,K_2) \twoheadrightarrow \text{Hom}(X,K_2/K_1) \twoheadrightarrow \text{Hom}\left( X, \frac{K_2/K_1}{\mathfrak{N} \cap (K_2/K_1)} \right)
\]  
Since cokernels are computed coordinatewise in Qcoh($\mathcal{R}$), Fact \ref{fact_WhatD_looks_like} implies
\[
\frac{K_2/K_1}{\mathfrak{N} \cap (K_2/K_1)} \simeq \frac{K_2}{K_1 + (\mathfrak{N} \cap K_2)} = \frac{K_2}{P}= \text{coker} \left( f/^{\mathcal{C}} \mathfrak{N} \right).
\]
Hence, $P$ is pure in $K_2$.  

The same kind of argument, but without dealing with pushouts, verifies clause \ref{item_K_lambda_stat_LS} of Theorem \ref{thm_MainThm_Stability}.  This takes care of the $(\mathbf{K}, \text{Pure})$ case; the others are almost verbatim the same argument as those in Section \ref{subsec_RMod_SAct_cases}.

\section{Proof of Corollary \ref{cor_SpecificExamples}}

Recall $\mathcal{FM}$ denotes the class of Flat Mittag-Leffler $R$-modules.  The following strengthens \v{S}aroch-Trlifaj~\cite[Section 3]{MR2988573} by bumping the conclusion from Kaplansky to Uniformly Stationary Kaplansky.
\begin{lemma}\label{lem_FM_UnifStatKap}
For any ring $R$ let $\kappa:= (|R|+\aleph_0)^+$,  $\lambda$ be regular with $\mu^{|R|+\aleph_0} < \lambda$ for all $\mu < \lambda$, and $\Gamma$ be the class of $\mathcal{L}_R$-appropriate models in $[V]^{<\lambda}$ that are $<\kappa$-closed (which is stationary, by Fact \ref{fact_LessKappaClosedStat}).  Then $\mathcal{FM}$ has the Kaplansky Property on $\Gamma$.  In particular, $\mathcal{FM}$ is a $<\lambda$-Uniformly Stationary Kaplansky class for $\lambda= \left( 2^{|R|+\aleph_0}\right)^+$.
\end{lemma}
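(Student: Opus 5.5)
Fix $\mathfrak{N} \in \Gamma$ and a flat Mittag-Leffler module $M \in \mathfrak{N}$. I must show that both $\mathfrak{N} \cap M$ and $M/(\mathfrak{N} \cap M)$ are flat Mittag-Leffler. The plan is to use the characterization of $\mathcal{FM}$ from \v{S}aroch--Trlifaj~\cite{MR2988573} (going back to Herbera--Trlifaj): a module $M$ is flat Mittag-Leffler if and only if it is $\aleph_1$-projective in the following sense. There is a family $\mathcal{S}$ of $\le |R|+\aleph_0$-generated (countably generated suffices over countable rings) pure submodules of $M$, each of which is projective, such that
\begin{itemize}
\item $\mathcal{S}$ contains $0$;
\item $\mathcal{S}$ is closed under unions of chains of length $\le |R|+\aleph_0$;
\item every subset of $M$ of size $\le |R|+\aleph_0$ is contained in a member of $\mathcal{S}$.
\end{itemize}
Such an $\mathcal{S}$ is a ``Kaplansky family'' of projective pure submodules. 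Equivalently, every $<\kappa$-sized subset of $M$ is contained in a $<\kappa$-generated projective pure submodule $P$ with $M/P$ again flat Mittag-Leffler. By elementarity ($\mathfrak{N} \prec_1 (V,\in)$ applied in a suitable $H_\theta$), there is such a family $\mathcal{S} \in \mathfrak{N}$.

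\textbf{Step 1: $\mathfrak{N}\cap M \in \mathcal{FM}$.} Since $\mathfrak{N}$ is $<\kappa$-closed and $\mathcal{S} \in \mathfrak{N}$, the following hold:
\begin{itemize}
\item every member $S \in \mathcal{S} \cap \mathfrak{N}$ has size $<\kappa$, so $S \subseteq \mathfrak{N}$, i.e. $S \subseteq \mathfrak{N} \cap M$;
\item every subset $X \subseteq \mathfrak{N} \cap M$ of size $<\kappa$ lies in $\mathfrak{N}$, so by elementarity some $S \in \mathcal{S} \cap \mathfrak{N}$ contains $X$;
\item $\mathcal{S} \cap \mathfrak{N}$ is closed under unions of chains of length $<\kappa$, again by closure.
\end{itemize}
Hence $\mathcal{S}\cap\mathfrak{N}$ is a family of projective submodules of $\mathfrak{N}\cap M$ witnessing the $\aleph_1$-projectivity criterion. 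Its members are pure in $\mathfrak{N}\cap M$, because they are pure in $M$ and $\mathfrak{N} \cap M \subseteq M$. So $\mathfrak{N}\cap M \in \mathcal{FM}$. Also, by Lemma~\ref{lem_EasyPureTraces}, $\mathfrak{N} \cap M$ is pure in $M$.

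\textbf{Step 2: $M/(\mathfrak{N}\cap M) \in \mathcal{FM}$.} This is the main obstacle. Flatness is immediate: $M/(\mathfrak{N}\cap M)$ is a quotient of a flat module by a pure submodule. For the Mittag-Leffler part I will build a Kaplansky family in the quotient by pushing $\mathcal{S}$ forward: set
\[
\mathcal{S}' := \{ (S+(\mathfrak{N}\cap M))/(\mathfrak{N}\cap M) \ : \ S \in \mathcal{S} \}.
\]
The key point is that for $S \in \mathcal{S}$, the module $S \cap \mathfrak{N}$ should be a projective pure submodule of $S$ with projective quotient $S/(S\cap\mathfrak{N})$. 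The case $S \in \mathfrak{N}$ is clear. In general I would argue via a filtration: write $M$ as the union of a continuous chain of members of $\mathcal{S}$ that starts with a cofinal segment in $\mathfrak{N}$, using $<\kappa$-closure. Then check that each step $S_{\alpha+1}/(S_\alpha + \mathfrak{N}\cap S_{\alpha+1})$ is a quotient of countably (i.e.\ $\le |R|$) presented projectives by pure submodules that are summands. I expect this to need the $\lambda$-size bound $\mu^{|R|+\aleph_0}<\lambda$ only to get stationarity. Alternatively, and preferably, I would invoke the relative Mittag-Leffler criterion: $M/(\mathfrak{N}\cap M)$ is Mittag-Leffler iff for every finitely presented module $F$ and every $x$, the pp-type quotients stabilize. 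I would verify this via the pp-definability of the Mittag-Leffler condition (Rothmaler), using elementarity of $\mathfrak{N}$ to reflect the witnessing finitely generated pp-subgroups into $\mathfrak{N}$.

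\textbf{Step 3.} Stationarity of $\Gamma$ follows from Fact~\ref{fact_LessKappaClosedStat}, applied with $\kappa$ and the regular $\lambda$ satisfying \eqref{eq_LessKappaClosureAll}. For $\lambda=(2^{|R|+\aleph_0})^+$ that hypothesis holds, since $\mu^{|R|+\aleph_0}\le 2^{\mu}$ and, for $\mu \le 2^{|R|+\aleph_0}$, we get $\mu^{|R|+\aleph_0} \le 2^{|R|+\aleph_0}<\lambda$. This gives the ``in particular'' clause.
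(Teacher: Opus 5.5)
Your Step 2, that $M/(\mathfrak{N}\cap M)\in\mathcal{FM}$, is the real content of the lemma, and it is not proved. Neither route you sketch works as described.

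\emph{The filtration plan aims at a false statement.} Suppose a continuous chain exhausting $M$ had projective factors $S_{\alpha+1}/(S_\alpha+\mathfrak{N}\cap S_{\alpha+1})$. Then $M/(\mathfrak{N}\cap M)$ would be filtered by projectives, hence projective. In general it is not. Over $\mathbb{Z}$, take a non-free $\aleph_1$-free group $H$ of size $\aleph_1$ and $M=H^{(\lambda)}$ with $H,\lambda\in\mathfrak{N}$. Countable closure gives $\omega_1\subseteq\mathfrak{N}$, hence $H\subseteq\mathfrak{N}$, and so $M/(\mathfrak{N}\cap M)\cong H^{(\lambda\setminus\mathfrak{N})}$, which is not free. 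Moreover, a continuous chain of $<\kappa$-generated members of $\mathcal{S}$ cannot exhaust a large $M$ at all.

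\emph{The pp-type alternative is only a slogan.} You never say what is reflected, or which property of $\mathfrak{N}$ makes the reflection work. The only property of the trace your Step 2 actually invokes is ordinary purity, and that cannot suffice: $\mathbb{Q}$ is the quotient of a free abelian group by a pure subgroup, yet it is not Mittag-Leffler.

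The missing idea is the one the paper uses. Since $\mathfrak{N}$ is $<\kappa$-closed with $\kappa=(|R|+\aleph_0)^+$, Lemma~\ref{lem_EasyPureTraces} with $\mu=\kappa$ shows that the trace $\mathfrak{N}\cap M\subseteq M$ is $\kappa$-pure. That is, every system of at most $|R|+\aleph_0$ equations with parameters in $\mathfrak{N}\cap M$ that is solvable in $M$ is already solvable in $\mathfrak{N}\cap M$. The argument in the third paragraph of the proof of \cite[Theorem 3.3]{MR2988573} uses exactly this $\kappa$-purity to conclude that $M/(\mathfrak{N}\cap M)$ is flat Mittag-Leffler. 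This is where $<\kappa$-closure, and not mere elementarity, is essential. For the submodule, $\mathfrak{N}\cap M\in\mathcal{FM}$ follows at once from purity of the trace, since $\mathcal{FM}$ is closed under pure submodules \cite[Lemma 4.1]{MR2900444}.

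Separately, the characterization you start from is false for uncountable $R$. Applied to $X=M$, it would make every flat Mittag-Leffler module of size $\le|R|+\aleph_0$ projective. That fails, for example, for $G\otimes_{\mathbb{Z}}\mathbb{Z}[x_\alpha:\alpha<\omega_1]$ with $G$ a non-free $\aleph_1$-free group of size $\aleph_1$. The usable version has countably generated projective members, closure under countable chains, and covering of countable subsets. With that correction your Step 1 goes through, though the pure-submodule argument is shorter. Step 3 is fine.
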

\begin{proof}
Fix an $\mathfrak{N}\in \Gamma$ and consider any $A \in \mathfrak{N} \cap \mathcal{FM}$.  By Lemma \ref{lem_EasyPureTraces}, the trace inclusion $\mathfrak{N} \cap A \to A$ is $\kappa$-pure.  It follows by \cite[Lemma 4.1]{MR2900444} that $\mathfrak{N} \cap A$ is in $\mathcal{FM}$ (this just uses $\omega$-purity of the trace inclusion) and by the argument of the 3rd paragraph of the proof of \cite[Theorem 3.3]{MR2988573} that $\frac{A}{\mathfrak{N} \cap A}$ is in $\mathcal{FM}$ (this uses the $\kappa$-purity of the trace inclusion). 

\end{proof}

As in \cite[Theorem 3.3]{MR2988573} the argument given above actually works for the class of $\mathcal{Q}$-Mittag Leffler modules, where $\mathcal{Q}$ is any class of modules.  The class $\mathcal{FM}$ is the special case where $\mathcal{Q}$ is the class of flat modules.

Following the terminology of \cite{MR2964610}, an object $D \in \text{Qcoh}(\mathcal{R})$ is a \textbf{Drinfeld vector bundle} if $D(v)$ is a Flat Mittag-Leffler $\mathcal{R}(v)$-module for every $v \in \text{Obj}\left( \text{dom} \mathcal{R} \right)$.  If $\mathfrak{N}$ is $\mathcal{L}_{\mathcal{R}}$-appropriate, $D \in \mathfrak{N}$, and $\mathfrak{N}$ is $\le \mathcal{L}_{\mathcal{R}}$-closed, then $D(v) \in \mathfrak{N}$ for every $v \in \text{Obj} \mathcal{Q}$.  So by Lemma \ref{lem_FM_UnifStatKap}, both $D(v) \cap \mathfrak{N}$ and $\frac{D(v)}{\mathfrak{N} \cap D(v)}$ are both in $\mathcal{FM}_{\mathcal{R}(v)}$ for every $v$.  Hence $\mathfrak{N} \cap D$ and $\frac{D}{\mathfrak{N} \cap D}$ are both Drinfeld vector bundles.  This shows:

\begin{corollary}\label{cor_Drinfeld_UnifStatKap}
For any flat diagram $\mathcal{R}$ of rings let $\kappa:= (|\mathcal{L}_{\mathcal{R}}|+\aleph_0)^+$,  $\lambda$ be regular with $\mu^{|\mathcal{L}_{\mathcal{R}}|+\aleph_0} < \lambda$ for all $\mu < \lambda$, and $\Gamma$ be the class of $\mathcal{L}_R$-appropriate models in $[V]^{<\lambda}$ that are $<\kappa$-closed (which is stationary, by Fact \ref{fact_LessKappaClosedStat}).  Then the class of Drinfeld vector bundles has the Kaplansky Property on $\Gamma$.  In particular, the class of  Drinfeld vector bundles is a $<\lambda$-Uniformly Stationary Kaplansky class for $\lambda= \left( 2^{|\mathcal{L}_{\mathcal{R}}|+\aleph_0}\right)^+$.
\end{corollary}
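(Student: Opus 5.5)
The plan is to reduce the statement coordinatewise to Lemma \ref{lem_FM_UnifStatKap}, applying it separately to each ring $\mathcal{R}(v)$ with the same model $\mathfrak{N}$, and then reassemble. First I would check that $\Gamma$ is stationary. Since $\kappa$ is regular and $\lambda$ is regular with $\mu^{<\kappa}\le\mu^{|\mathcal{L}_{\mathcal{R}}|+\aleph_0}<\lambda$ for all $\mu<\lambda$, the second part of Fact \ref{fact_LessKappaClosedStat} applies. So the $<\kappa$-closed structures are stationary in $[V]^{<\lambda}$. Almost all of them contain the set $\mathcal{L}_{\mathcal{R}}\cup\{\mathcal{L}_{\mathcal{R}},\mathcal{R},\mathcal{Q}\}$, which has size $<\kappa\le\lambda$, and they are $\mathcal{L}_{\mathcal{R}}$-appropriate because the signature is finitary. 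For $\lambda=(2^{|\mathcal{L}_{\mathcal{R}}|+\aleph_0})^+$ the cardinal arithmetic hypothesis holds: for $\mu<\lambda$ we have $\mu\le 2^{\nu}$ with $\nu=|\mathcal{L}_{\mathcal{R}}|+\aleph_0$, so $\mu^{\nu}\le 2^{\nu}<\lambda$. This gives the ``in particular'' clause once the Kaplansky Property on $\Gamma$ is established.

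Next, fix $\mathfrak{N}\in\Gamma$ and a Drinfeld vector bundle $D\in\mathfrak{N}$. I would argue as in the paragraph preceding the statement. Every $v\in\text{Obj}\mathcal{Q}$ is an element of $\mathfrak{N}$, since the sorts belong to the signature and the signature is a subset of $\mathfrak{N}$. So by elementarity $D(v)\in\mathfrak{N}$, viewed as an $\mathcal{R}(v)$-module. Each $|\mathcal{R}(v)|\le|\mathcal{L}_{\mathcal{R}}|$, so $\kappa\ge(|\mathcal{R}(v)|+\aleph_0)^+$, and $\mathfrak{N}$ is $\mathcal{L}_{\mathcal{R}(v)}$-appropriate and $<(|\mathcal{R}(v)|+\aleph_0)^+$-closed. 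Also $\lambda$ satisfies the hypothesis of Lemma \ref{lem_FM_UnifStatKap} for $R=\mathcal{R}(v)$. Hence $\mathfrak{N}$ lies in the class $\Gamma$ of that lemma for the ring $\mathcal{R}(v)$. Note that the proof of that lemma only uses $\mathfrak{N}$ being $<\kappa$-closed for some $\kappa>|R|$, via the $\kappa$-purity from Lemma \ref{lem_EasyPureTraces}. Therefore $\mathfrak{N}\cap D(v)$ and $D(v)/(\mathfrak{N}\cap D(v))$ are flat Mittag-Leffler $\mathcal{R}(v)$-modules.

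Finally, I would reassemble the coordinates. By Lemma \ref{lem_quasicoherence_downward}, $\mathfrak{N}\cap D$ is quasicoherent, and its $v$-th coordinate is exactly $\mathfrak{N}\cap D(v)$. So $\mathfrak{N}\cap D$ is a Drinfeld vector bundle. The quotient $D/(\mathfrak{N}\cap D)$ is the cokernel in Qcoh($\mathcal{R}$), which is a Grothendieck category with cokernels computed coordinatewise. Hence it is quasicoherent with $v$-th coordinate $D(v)/(\mathfrak{N}\cap D(v))$, which is flat Mittag-Leffler, so it is also a Drinfeld vector bundle.

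The main obstacle is bookkeeping rather than mathematics. One must confirm that the single cardinal $\kappa$ defined from $|\mathcal{L}_{\mathcal{R}}|$ dominates every $|\mathcal{R}(v)|$, so that one $\mathfrak{N}$ serves all coordinates simultaneously. One must also confirm that quasicoherence of the quotient really follows from cokernels being computed coordinatewise, which holds in the semiseparated setting cited from Enochs--Estrada.
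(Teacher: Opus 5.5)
Your proposal is correct and takes essentially the same route as the paper: apply Lemma \ref{lem_FM_UnifStatKap} to each coordinate $D(v)$ using the same $\mathfrak{N}$, then reassemble. Your explicit checks (stationarity via Fact \ref{fact_LessKappaClosedStat}, the cardinal arithmetic for $\lambda=(2^{|\mathcal{L}_{\mathcal{R}}|+\aleph_0})^+$, and quasicoherence of $\mathfrak{N}\cap D$ via Lemma \ref{lem_quasicoherence_downward}) only fill in details the paper leaves implicit. The paper gets $D(v)\in\mathfrak{N}$ from $\le|\mathcal{L}_{\mathcal{R}}|$-closure rather than elementarity, which makes no real difference. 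One small simplification: the coordinatewise quotient $D/(\mathfrak{N}\cap D)$ is quasicoherent for any diagram of rings, by right exactness of $\mathcal{R}(w)\otimes_{\mathcal{R}(v)}-$, so you need not appeal to the semiseparated-scheme setting of Enochs--Estrada.
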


Finally we prove Corollary \ref{cor_SpecificExamples}.  The class $\mathcal{FM}$ is closed under extensions in $R$-Mod \cite[Lemma 4.1]{MR2900444}.  And since cokernels are computed coordinatewise, the class of Drinfeld
vector bundles is closed under extensions in $\mathcal{R}$-Mod.  In particular they are closed under pure extensions, in both the categorical and the geometric sense.  It remains to verify the remaining hypotheses of Theorem
\ref{thm_MainApplications} (resp.\ Theorem \ref{thm_Precise_Qcoh}) for appropriate pairs of cardinals.

\textbf{The $\mathcal{FM}$ case:}  Let $\theta:= |R| + \aleph_0$ and $\kappa:= \left( 2^\theta
\right)^+$.  Then $\mathcal{FM}$ is $<\kappa$-Uniformly Stationary Kaplansky by Lemma \ref{lem_FM_UnifStatKap}.  Let $\lambda$ be any cardinal with $\lambda^{2^{\theta}} = \lambda$.  Then $\lambda^{<\kappa}  = \lambda$, and to apply Theorem \ref{thm_MainApplications} it remains to show that $\mathcal{FM}$ is $\lambda$-Uniformly Stationary Kaplansky, i.e., $<\lambda^+$-Uniformly Stationary Kaplansky.  By Lemma \ref{lem_FM_UnifStatKap} it suffices to know that $\mu^\theta < \lambda^+$ for all $\mu < \lambda^+$.  But this holds because $\mu^\theta \le \lambda^\theta = \lambda < \lambda^+$.  Theorem \ref{thm_MainApplications} now yields the $\mathcal{FM}$ part of Corollary
\ref{cor_SpecificExamples}.

\textbf{The Drinfeld vector bundle case:}  Let $\text{FP}$ and $\zeta = \left| \text{FP} \cup \bigcup
\text{FP} \right|^+$ be as in the statement of (the second part of) Theorem \ref{thm_Precise_Qcoh}.  Let $\theta:= \left| \mathcal{L}_{\mathcal{R}} \right| + \zeta + \aleph_0$, $\kappa:=
\left( 2^\theta \right)^+$, and let $\lambda$ be any cardinal with $\lambda^{2^\theta} = \lambda$.  The verification is
verbatim as above, using Corollary \ref{cor_Drinfeld_UnifStatKap} in place of Lemma
\ref{lem_FM_UnifStatKap}, and noting $\kappa > \left| \mathcal{L}_{\mathcal{R}} \right|$ as
required by Theorem \ref{thm_Precise_Qcoh}.  This gives the conclusion for geometric
purity.  For categorical purity we must further check that the witnessing stationary
classes contain stationarily many $<\zeta$-closed structures; but the classes produced by
Corollary \ref{cor_Drinfeld_UnifStatKap} consist of $<\theta^+$-closed structures, and
$\theta \ge \zeta$, so this is automatic.

\end{document}